\newcommand{\Fock}{\mathcal{F}^{2}(\mathbb{C}^{n})}
\newcommand{\Complexn}{\mathbb{C}^{n}}
\newcommand{\BFn}{\mathcal{B}(\mathcal{F}^{2}(\mathbb{C}^{n}))}
\newcommand{\Pn}{P_{n}}
\newcommand{\Complex}{\mathbb{C}}
\newcommand{\C}{\mathcal{C}}
\newcommand{\K}{\mathcal{K}}
\newcommand{\g}{\mathrm{g}}
\newcommand{\RE}{\mathrm{Re}}
\newcommand{\IM}{\mathrm{Im}}
\newcommand{\Ln}{\mathcal{L}}
\newcommand{\Lag}{\mathcal{L}}
\newcommand{\B}{\mathcal{B}}
\newcommand{\M}{\mathcal{M}}
\newcommand{\A}{\mathcal{A}}
\newcommand{\W}{\mathcal{W}}
\newcommand{\F}{\operatorname{F}}
\newcommand{\N}{\mathbb{N}}
\newcommand{\innerp}[2]{\left\langle#1,#2\right\rangle}
\newcommand{\Real}{\mathbb{R}}
\newcommand{\Realn}{\mathbb{R}^{n}}
\newcommand{\Entero}{\mathbb{Z}}
\newcommand{\h}{\mathcal{H}}
\newcommand{\Ele}{L_{2}}
\newcommand{\Borelpcn}{\mathfrak{B}_{reg}(\Complexn)}
\newcommand{\Borelpcnm}{\mathfrak{B}_{reg,M}(\Complexn)}
\newcommand{\Borelcn}{\operatorname{Borel}(\Complexn)}
\newcommand{\Borelrn}{\operatorname{Borel}(\mathbb{R}^{n})}
\newcommand{\Borelprn}{\mathfrak{B}_{reg}(\Realn)}
\newcommand{\ppabeta}{\ppartial^{\alpha}\overline{\ppartial}{}^{\beta}}
\newcommand{\hspn}{\hspace{-2pt}}
\newcommand{\Frc}{\operatorname{G}}
\newcommand{\Bb}{\mathbf{B}}
\newcommand{\rb}{\mathbf{r}}
\newcommand{\bT}{\mathbf{T}}
\newtheorem{theorem}{Theorem}[section]
\newtheorem{corollary}[theorem]{Corollary}
\newtheorem{proposition}[theorem]{Proposition}
\newtheorem{definition}[theorem]{Definition}
\newtheorem{lemma}[theorem]{Lemma}
\theoremstyle{definition}
\newtheorem{example}{Example}
\newtheorem{remark}{Remark}
\newcommand{\ppartial}{\pmb{\partial}}
\numberwithin{equation}{section}
\begin{document}

\title[$\Lag$-invariant FC measures for derivatives and Toeplitz operators]{$\Lag$-invariant Fock-Carleson type measures \\ for derivatives of order $k$ and the \\ corresponding Toeplitz operators}

	\author[Esmeral]{Kevin Esmeral}
	\address{%
		Department of Mathematics\\Universidad de Caldas\\
		P.O. 170004\\
		Manizales,	Colombia}
	
	\email{kevin.esmeral@ucaldas.edu.co}
	\author[Rozenblum]{Grigori Rozenblum}
	\address{ Department of Mathematics\\ Chalmers University of Technology,  The University of Gothenburg, Sweden, \\ and Faculty of Physics St.Petersburg State University, Russia}
	\email{grigori@chalmers.se}
	\author[Vasilevski]{Nikolai Vasilevski}
	\address{Department of Mathematics\\CINVESTAV-IPN \\M\'exico, D.F, 07360, M\'EXICO}
	\email{nvasilev@math.cinvestav.mx}
	\thanks{The first named author wishes to thank the  Universidad de Caldas for financial support and hospitality.
	\newline
The second named author is grateful for support by grant RFBR  No 17-01-00668 and to Mittag-Leffler Institute for support and productive environment.
\newline
	The third named author has been partially supported by CONACYT Project 102800, M\'exico.}
	\subjclass{47A75 (primary), 58J50 (secondary)}
	\keywords{Fock space, Toeplitz operators}
	
	\date{\today}
	

\dedicatory{To Nina Uraltseva, with best regards on her Jubilee}
	\begin{abstract}
		\noindent
		Our purpose is to characterize the so-called  horizontal Fock-Carleson type measures for derivatives of order $k$ (we write it $k$-hFC for short) for the Fock space as well as the Toeplitz operators generated by sesquilinear forms given by them. We introduce  real coderivatives of  $k$-hFC type measures and show that the C*-algebra generated by  Toeplitz operators with the corresponding class of symbols is commutative and isometrically isomorphic to certain C*-subalgebra of $L_{\infty}(\mathbb{R}^{n})$. The above results are extended  to measures that are invariant under translations along Lagrangian planes.
	\end{abstract}

	\maketitle

	\section{Introduction}

In the study of Toeplitz operators in the Bergman type spaces, certain algebraic and analytic questions arise. The algebraic ones deal with describing  algebras generated by Toeplitz operators with particular  classes of symbols. One of the most important cases here concerns the algebra in question being commutative, and the Toeplitz operators generating this algebra admitting a simultaneous diagonalization: this means that there exists a unitary operator (specific for each particular case) that transforms all Toeplitz operators in this algebra into  operators of multiplication by some  functions (we call them \emph{spectral functions}). Of course, such a diagonalization immediately reveals all the main properties of corresponding Toeplitz operators.

The analytic questions are related with criteria for boundedness and compactness  of operators with symbols in a given class, passing over from regular to more and more singular objects serving as symbols.

The present paper, dealing with both types of questions,  has two objectives. Firstly, we extend to the multidimensional case   the construction of Toeplitz operators with strongly singular symbols, and secondly, which is our main goal, we unhide among those strongly singular symbols the classes that generate via corresponding Toeplitz operators commutative algebras. Moreover we present  explicit formulas for the spectral functions for these Toeplitz operators.

	We denote by $\Fock$ the Fock
	(also known as the Segal--Bargmann,
	see \cite{Bargmann,Fock,Segal}) space
	consisting of all entire functions that are square integrable
	with respect to the $2n$-dimensional  Gaussian measure
	\[
	d\g_{n}(z)=(\pi)^{-n}\,e^{-|z|^{2}} d\nu_{2n}(z),\quad z\in\Complexn,
	\]
	where $\nu_{2n}$ is the standard Lebesgue measure on $\Complexn\simeq\Real^{2n}$. It is well-known that $\Fock$ is a closed subspace of $\Ele(\Complexn,d\g_{n})$ and the orthogonal projection $\Pn$ from $L_{2}(\Complexn,d\g_{n})$ onto $\Fock$, called  the Bargmann projection,  has the  integral form
	\begin{equation}\label{proj-L2-F}
	(\Pn f)(z)=\int_{\Complexn}f(w)\overline{K_{z}(w)}d\g_{n}(w),
	\end{equation}
	where the function $K_{z}\colon\Complexn\rightarrow\Complexn$ is the \emph{reproducing kernel} at the point $z$,

\begin{equation*}
	K_{z}(w)=e^{\overline{z}\cdot w}, \quad\,w\in\Complexn.
\end{equation*}

	Given $\varphi\in\,L_{\infty}(\Complexn)$,
	the \emph{Toeplitz operator} $\bT_{\varphi}$, with defining symbol $\varphi$,
	acts on the Fock space $\Fock$  by  $\bT_{\varphi}f=\Pn(f\varphi)$.
	By  \eqref{proj-L2-F},  the Toeplitz operator $\bT_{\varphi}$ with bounded  $\varphi$ has the   representation
	\begin{equation}\label{int-rep-toeplit-u}
	(\bT_{\varphi}f)(z)=\pi^{-n}\int_{\Complexn}f(w)e^{z\cdot \overline{w}}e^{-|w|^{2}}\varphi(w)d\nu_{2n}(w),\quad z\in\Complexn.
	\end{equation}
	Isralowitz and Zhu \cite{Isra-Zhu} extended the above notion of Toeplitz operators  to the case of symbols being Borel regular measures $\mu$:
	\begin{equation}\label{Toeplitz-measure}
	 \bT_{\mu}f(z)=\pi^{-n}\int_{\Complexn}e^{z\cdot\overline{w}}f(w)e^{-|w|^{2}}d\mu(w),\quad z\in\Complexn.
	\end{equation}
	If $\mu$ is a  complex Borel regular measure  satisfying the \emph{Condition}  $(M)$, namely
	\begin{equation}\label{cond-M}
	 \sup_{z\in\Complexn}\int_{\Complexn}|K_{z}(w)|^{2}e^{-|w|^{2}}d|\mu|(w)<\infty,
	\end{equation}
	then the operator $\bT_{\mu}$ given in \eqref{Toeplitz-measure} is  well-defined on the dense subset of all finite linear combinations  of the kernel functions at different points $z$. In case when $\mu$ is absolutely continuous with respect to the standard Lebesgue measure, all the results  can be reformulated in terms of the density function, and  $\bT_{\mu}$ becomes a Toeplitz operator in the classic sense \eqref{int-rep-toeplit-u}.

	\smallskip	

A further extension of the notion of Toeplitz operators was elaborated in \cite{Vasilevski-Rozenblum}, where Rozenblum and Vasilevski introduced  Toeplitz  operators $\bT_{\F}$, defined by  bounded sesquilinear forms $\F$ in a reproducing kernel Hilbert space. This approach permits them to enrich the class of Toeplitz operators and turn into Toeplitz many operators that failed to be Toeplitz in the classical sense.
 	
\smallskip
In the present paper we introduce  horizontal (Definition \ref{hor-meas}), $\alpha$-horizontal (Definition \ref{ahor-meas}),  $\Ln$-invariant (Definition \ref{Linv-meas}), and $\alpha\Ln$-invariant (Definition \ref{aLinv-meas}) Fock-Carleson type  measures for derivatives of order $k$ in the multidimensional Fock space, and extend to Toeplitz operators defined by such symbols the main results of \cite{Esmeral-Vasilevski,Vasilevski-Rozenblum}.   Furthermore, we introduce  real coderivatives of Fock-Carleson measures  and  study  horizontal Toeplitz operators generated by all  these types of measures.
	
The paper is organized as follows.  Section \ref{preliminaries-1} contains some preliminaries: here we  fix our notation and recall some basic properties of horizontal and $\Ln$-invariant operators. In Section  \ref{Preliminaries-2} we introduce Fock-Carleson measures, Fock-Carleson measures for derivatives of order $k$, and Toeplitz operators generated by sesquilinear forms. In Section \ref{Section-Hor-FC} we introduce  horizontal Fock-Carleson type measures and show that  a Toeplitz operator with such measure-symbol is horizontal if and only if the Fock-Carleson measure is horizontal (Theorem \ref{thm-char-hfc}).
	Then we give an explicit formula for the spectral functions of such Toeplitz operators (Theorem \ref{diag-To-hFC}). In Section \ref{Section-Hor-kFC} we introduce  horizontal Toeplitz operators generated by Fock-Carleson type measures
	for derivatives of order $k$. We introduce  real coderivatives of $k$-FC type measures and show that a Toeplitz operator generated by these coderivatives is horizontal if and only if the measure is horizontal (Theorem \ref{thm-char-2kfc}). Then we  give an explicit formula for the spectral function (Theorem \ref{diag-To-khFC}).  In Section \ref{Section-Hor-akFC} we proceed  with the study of $\alpha$-horizontal type measures and show that an $\alpha$-horizontal measure is  a $k$-FC for $\Fock$ if and only if $\mu_{\alpha}$ is a Fock-Carleson type measure for $\Fock$ (Proposition \ref{a-k-hFC--a-2a-k-hFC}). We show further that the C*-algebra generated by  Toeplitz operators given by real coderivatives of order $2k$ of  $(\alpha,k)$-FC  type measures is commutative and isometrically isomorphic to a C*-subalgebra of $L_{\infty}(\Complexn)$.   In Section \ref{Section-LFC} we extend  the above results to $\Ln$-invariant measures. Finally, in Section \ref{se:common} we present th!
 e main co
 mmon properties of diagonalizable Toeplitz operators studied in the paper.
	
\section{Preliminaries} \label{preliminaries-1}

We  use the following standard notation: $z=x+iy=(z_{1},z_{2},\ldots,z_{n})\in\Complex^{n}$, where $x=(\RE\, z_{1},\ldots,\RE \,z_{n})$ and $y=(\IM \,z_{1},\ldots,\IM \,z_{n})$. For $z,w\in\Complex^{n}$ we write
	\begin{align*}
	 z\cdot w&=\sum_{k=1}^{n}z_{k}w_{k},\quad z^{2}=z\cdot z=\sum_{k=1}^{n}z_{k}^{2},\quad |z|^{2}=z\cdot\overline{z}=\sum_{k=1}^{n}|z_{k}|^{2}.
	\end{align*}

	For any multi-index $\alpha\in\Entero_{+}^{n}$ and any $z\in\Complexn$ we fix the usual notation:
	\begin{align*}
	\alpha!&=\alpha_{1}!\alpha_{2}!\cdots\alpha_{n}!,\quad |\alpha|=\sum_{k=1}\alpha_{k},\quad \alpha\leq \beta\Leftrightarrow \alpha_{j}\leq \beta_{j},\quad \alpha,\beta\in\Entero_{+}^{n}.\\
	z^{\alpha}&=z_{1}^{\alpha_{1}}\cdots z_{n}^{\alpha_{n}},\quad 	 \overline{z}^{\alpha}=\overline{z_{1}}^{\alpha_{1}}\cdots \overline{z_{n}}^{\alpha_{n}},\,
	\partial^{\alpha}f=\frac{\partial^{\alpha_{1}}}{z_{1}^{\alpha_{1}}}\cdots \frac{\partial^{\alpha_{1}}}{z_{n}^{\alpha_{n}}}f,\quad \overline{\partial}^{\alpha}f=\frac{\partial^{\alpha_{1}}}{\overline{z_{1}}^{\alpha_{1}}}\cdots \frac{\partial^{\alpha_{1}}}{\overline{z_{n}}^{\alpha_{n}}}f.
	\end{align*}
	
	\noindent
	The operator $\Bb^{*}\colon L_{2}(\Real^{n})\longrightarrow \Ele(\Complex^{n},d\g_{n})$,  given by
\begin{equation*}
	(\Bb^{*}f)(z)=\pi^{-n/4}\int_{\Real^{n}}f(x)e^{\sqrt{2}x\cdot z-\frac{x^{2}}{2}-\frac{z^{2}}{2}}\,dx,\quad z\in\Complexn,
\end{equation*}
is an isometric isomorphism from $\Ele(\Real^{n})$ onto the subspace $\Fock$ of  $\Ele(\Complex^{n},d\g_{n})$, known as the \emph{Bargmann transform}. Its adjoint operator  
$\Bb\colon \Ele(\Complex^{n},d\g_{n}) \longrightarrow\,L_{2}(\Real^{n})$ is
\begin{equation*}
	(\Bb f)(x)=\pi^{-n/4}\int_{\Complex^{n}}f(z)e^{\sqrt{2}x\cdot\overline{z}-\frac{x^{2}}{2}-\frac{\overline{z}^{2}}{2}}\,d\g_{n}(z),\quad x\in\Realn.
\end{equation*}

	\begin{example}\label{example-rnucleo}
	We calculate the image of the reproducing kernel under the inverse Bargmann transform applying \cite[Proposition 6.10]{Zhu-F} $n$ times. This yields
		\begin{align}\label{Bargmann-inversa-kernel}
		(\Bb K_{z})(x)&=\prod_{j=1}^{n}\pi^{-1/4}\int_{\Complex}e^{\overline{z_{j}}w_{j}}e^{\sqrt{2}x_{j}\overline{w_{j}}-\frac{x_{j}^{2}}{2}-\frac{\overline{w_{j}}^{2}}{2}}d\g(w_{j})=(\pi)^{-n/4}e^{\sqrt{2}x\cdot\overline{z}-\frac{x^{2}}{2}-\frac{\overline{z}^{2}}{2}}, \quad x\in\Realn.
		\end{align}
	\end{example}
	
Recall that, given $h\in\Complex^{n}$, the \emph{Weyl operator} $\W_{h}$ is defined on $\Ele(\Complexn,d\g_{n})$ as the following weighted translation
	\begin{equation*}
		\W_{h}f(z)=\displaystyle e^{z\cdot\overline{h}-\frac{|h|^{2}}{2}}f(z-h),\quad z\in\Complex^{n}.
	\end{equation*}
The Weyl operator $\W_{h}$  is  obviously unitary,  with $\W_{-h}=\W_{h}^{-1}$,  and
\begin{equation}\label{weyl-nucleok}
	\W_{h}K_{z}(w)=e^{-\overline{z}\cdot h-\frac{h^{2}}{2}}K_{z+h}(w),\quad w\in\Complex^{n}.
\end{equation}	

\medskip
We list now several statements (see \cite{Esmeral-Vasilevski}, for details), whose extensions to Toeplitz operators with singular symbols will constitute the main results of the paper.

A linear  operator  $S\in\BFn$ is said to be \emph{horizontal}  if for every $h\in\Realn$ it commutes with  $\W_{ih}$,  that is, for every $h\in\Realn$,		
\begin{equation*}
\W_{ih}S=S\W_{ih}.
\end{equation*}
Then, a function $\varphi\in\,L_{\infty}(\Complex^{n})$ is said to be  \emph{horizontal}  if for every $h\in\Realn$,
		\begin{equation*}
		\varphi(\zeta-ih)=\varphi(\zeta),\quad  \zeta\in\Complexn.
		\end{equation*}	
		
\begin{lemma}[{\cite[Lemma 3.6]{Esmeral-Vasilevski}}]\label{lema-trans-ih}
		A function  $\varphi\in\,L_{\infty}(\Complex^{n})$ is horizontal  if and only if there exists $a\in\,L_{\infty}(\Real^{n})$ such that
		$\varphi(z)=a(\RE\, z),\quad \text{a. e. $z\in\Complex^{n}$}.$
\end{lemma}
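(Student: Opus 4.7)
The plan is to verify the two implications separately: the ``if'' direction by direct substitution and the ``only if'' direction by a mollification/density argument that upgrades an a.e.\ statement holding for each fixed translate into an a.e.\ statement holding pointwise in the translation parameter.

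For the backward direction, suppose $\varphi(z)=a(\RE z)$ for some $a\in L_\infty(\Realn)$. Since $ih$ is purely imaginary, $\RE(\zeta-ih)=\RE\zeta$ for every $h\in\Realn$, and therefore
\[
\varphi(\zeta-ih)=a(\RE(\zeta-ih))=a(\RE\zeta)=\varphi(\zeta)
\quad\text{for every }h\in\Realn,
\]
so $\varphi$ is horizontal.

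For the forward direction, write $\zeta=x+iy$ with $x,y\in\Realn$ and set $F(x,y):=\varphi(x+iy)\in L_\infty(\Realn\times\Realn)$. The horizontal condition reads: for each fixed $h\in\Realn$, $F(x,y-h)=F(x,y)$ for a.e.\ $(x,y)$. The goal is to conclude that $F$ is a.e.\ independent of $y$, so that one may define $a(x):=F(x,y)$ (for any representative choice) and obtain $\varphi(x+iy)=a(x)$ a.e., with $\|a\|_\infty\le\|\varphi\|_\infty$. The subtlety is that the null set on which the identity fails may depend on $h$, so a naive pointwise argument is not available.

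To handle this, first fix a countable dense subset $D\subset\Realn$ and take the union of the corresponding null sets to obtain a single null set $N\subset\Realn\times\Realn$ off which $F(x,y-h)=F(x,y)$ holds for all $h\in D$ simultaneously. By Fubini, for a.e.\ $x\in\Realn$ the section $N_x$ is null and $F(x,\cdot)\in L_\infty(\Realn)$ is translation-invariant under $D$. Next, one convolves $F(x,\cdot)$ with a smooth mollifier $\psi_\eps\in C_c^\infty(\Realn)$ to obtain $G_\eps(x,\cdot)$, which is continuous in $y$ and inherits invariance under translations by $D$. Continuity together with invariance under a dense set of translations forces $G_\eps(x,\cdot)$ to be constant, say equal to $a_\eps(x)$. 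Letting $\eps\to 0$, Lebesgue differentiation gives $G_\eps(x,y)\to F(x,y)$ for a.e.\ $y$, so $F(x,y)=a(x)$ for a.e.\ $y$, where $a(x):=\lim_{\eps\to 0}a_\eps(x)$ exists a.e.\ and is measurable.

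The main obstacle is precisely the bookkeeping with null sets in passing from ``for every $h$, a.e.\ $(x,y)$'' to ``for a.e.\ $x$, for every $h$, a.e.\ $y$''; the countable-density plus mollification argument sketched above is the standard way to carry this out without imposing any additional regularity on $\varphi$.
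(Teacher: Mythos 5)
The paper does not actually prove this lemma: it is imported verbatim from \cite[Lemma 3.6]{Esmeral-Vasilevski}, so there is no internal proof to compare against. Your argument is correct and complete on its own terms. The ``if'' direction is immediate, and in the ``only if'' direction you correctly identify the real difficulty --- that the exceptional null set in $F(x,y-h)=F(x,y)$ depends on $h$ --- and resolve it by the standard scheme: pass to a countable dense set $D$ of translations, take the union of the null sets and apply Fubini to get, for a.e.\ $x$, a section $F(x,\cdot)\in L_\infty(\Realn)$ invariant under translations by $D$; then the mollifications $G_\eps(x,\cdot)$ are continuous, inherit the $D$-invariance everywhere, hence are constant, and converge to $F(x,\cdot)$ at Lebesgue points, forcing $F(x,\cdot)$ to be a.e.\ constant. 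Two cosmetic refinements are worth making explicit: take a sequence $\eps_k\to 0$ rather than a continuum limit, so that $a(x)=\lim_k a_{\eps_k}(x)$ is manifestly measurable and the a.e.-convergence sets are handled countably (the existence of the limit follows by evaluating $G_{\eps_k}(x,\cdot)$ at a single Lebesgue point of $F(x,\cdot)$); and note that the constancy of the continuous $D$-invariant function $G_\eps(x,\cdot)$ uses only that $D$ is dense, not that it is a subgroup. With these points spelled out, $\|a\|_\infty\le\|\varphi\|_\infty$ and $\varphi(z)=a(\RE z)$ a.e.\ follow exactly as you state.
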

Recall that the \emph{Berezin transform} \cite{BerezinFA}  of an  operator $S\in\B(\Fock)$ is given by	
\begin{equation*}
\widetilde{S}(z)=\frac{\innerp{SK_{z}}{K_{z}}}{\innerp{K_{z}}{K_{z}}}, \quad z \in \mathbb{C}^n.
\end{equation*}
\begin{proposition}[{\cite[Theorem 3.7]{Esmeral-Vasilevski}}]\label{criterio-horizontal-operators}
		Let $S\in\B(\Fock)$. Then  the following conditions are equivalent:
		\begin{enumerate}
			\item 		$S$ is horizontal;
			\item There exists $\varphi\in\,L_{\infty}(\Realn)$ such that $\Bb S\Bb^{*}=M_{\varphi}$, where $M_{\varphi}$ denotes the multiplication operator by $\varphi$;
			\item The Berezin transform $\widetilde{S}$ is a horizontal function. i.e., $\widetilde{S}(z)=b(\RE \,z),\quad a.e.\, z\in\Complex^{n}$ for some $b\in\,L_{\infty}(\Real^{n})$. 	
		\end{enumerate}
\end{proposition}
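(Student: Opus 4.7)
My plan is to derive both equivalences from a single intertwining identity. First I would prove the key lemma that, for each $h\in\Realn$, the Bargmann conjugation sends the imaginary Weyl shift to a modulation:
$$\Bb\,\W_{ih}\,\Bb^{*} = M_{e^{-i\sqrt{2}\,h\cdot x}} \quad \text{on } L_{2}(\Realn).$$
To verify this I would evaluate both sides on the total family $\{\Bb K_{z}: z\in\Complexn\}$: formula \eqref{weyl-nucleok} expresses $\W_{ih}K_{z}$ as an explicit scalar multiple of $K_{z+ih}$, and substituting the Bargmann images from \eqref{Bargmann-inversa-kernel} one sees that the Gaussian pre-factors cancel and only the character $e^{-i\sqrt{2}\,h\cdot x}$ survives.

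With this in hand, the equivalence $(1)\Leftrightarrow(2)$ becomes a commutant computation. Setting $T=\Bb S\Bb^{*}$, condition $(1)$ is equivalent to $T$ commuting with every modulation $M_{e^{-i\sqrt{2}\,h\cdot x}}$, $h\in\Realn$. The direction $(2)\Rightarrow(1)$ is immediate, since multiplication operators commute with one another. For $(1)\Rightarrow(2)$, I would invoke the classical fact that the family $\{M_{e^{i\xi\cdot x}}:\xi\in\Realn\}$ generates, in the weak operator topology, the maximal abelian von Neumann algebra $\{M_{\varphi}:\varphi\in L_{\infty}(\Realn)\}$ of $\B(L_{2}(\Realn))$; hence any operator in its commutant must itself be a multiplication operator.

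For $(1)\Leftrightarrow(3)$ I would turn to the Berezin transform. A short calculation starting from \eqref{weyl-nucleok}, combined with unitarity $\|\W_{ih}K_{z}\|=\|K_{z}\|$ and $\|K_{z+ih}\|^{2}=e^{|z+ih|^{2}}$, yields the transformation rule
$$\widetilde{\W_{ih}^{*}\,S\,\W_{ih}}(z)=\widetilde{S}(z+ih), \qquad z\in\Complexn,\ h\in\Realn.$$
If $S$ is horizontal, the left-hand side collapses to $\widetilde{S}(z)$, so $\widetilde{S}$ is invariant under all vertical translations, and Lemma \ref{lema-trans-ih} produces the required $b\in L_{\infty}(\Realn)$ with $\widetilde{S}(z)=b(\RE z)$. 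Conversely, if $\widetilde{S}$ is horizontal, the identity reads $\widetilde{\W_{ih}^{*}S\W_{ih}}=\widetilde{S}$, and the injectivity of the Berezin transform on $\B(\Fock)$ forces $\W_{ih}^{*}S\W_{ih}=S$, i.e.\ $S\W_{ih}=\W_{ih}S$.

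The step that will need the most care is the commutant identification inside $(1)\Rightarrow(2)$: although the fact that the modulations $\{M_{e^{i\xi\cdot x}}:\xi\in\Realn\}$ generate the maximal abelian multiplication algebra in $\B(L_{2}(\Realn))$ is standard (via Plancherel, or the Stone–von Neumann theorem), it must be cited cleanly rather than stated loosely. Should one wish to avoid the direct m.a.s.a.\ argument altogether, an alternative is to run the cycle $(1)\Rightarrow(3)\Rightarrow(2)$: construct $\varphi\in L_{\infty}(\Realn)$ directly from the horizontal symbol $b$ supplied by Lemma \ref{lema-trans-ih} and the diagonalization implemented by $\Bb$, thereby closing the triangle without invoking abstract commutant theory.
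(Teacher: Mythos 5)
The paper does not actually prove this proposition: it is imported verbatim from \cite[Theorem 3.7]{Esmeral-Vasilevski}, so there is no internal proof to compare against. Your argument is correct and self-contained, and it is the natural one. The key identity $\Bb\,\W_{ih}\,\Bb^{*}=M_{e^{-i\sqrt{2}\,h\cdot x}}$ does hold and may legitimately be checked on the total family $\{\Bb K_{z}\}$; with it, $(1)\Leftrightarrow(2)$ reduces to the fact that the commutant of the character group $\{M_{e^{i\xi\cdot x}}:\xi\in\Realn\}$ is exactly $\{M_{\varphi}:\varphi\in L_{\infty}(\Realn)\}$ — which follows since the weak-$*$ closed span of the characters is all of $L_{\infty}(\Realn)$ (injectivity of the Fourier transform on $L_{1}$), so the commutant of the characters is contained in the commutant of the multiplication m.a.s.a., which is that m.a.s.a.\ itself. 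The covariance rule $\widetilde{\W_{ih}^{*}S\W_{ih}}(z)=\widetilde{S}(z+ih)$, Lemma \ref{lema-trans-ih}, and the injectivity of the Berezin transform on $\B(\Fock)$ settle $(1)\Leftrightarrow(3)$ exactly as you describe (and since $\widetilde{S}$ is real-analytic, the ``a.e.'' in Lemma \ref{lema-trans-ih} causes no trouble). One caution on the computation you defer: formula \eqref{weyl-nucleok} as printed carries the prefactor $e^{-\overline{z}\cdot h-\frac{h^{2}}{2}}$, whereas the correct one is $e^{-\overline{z}\cdot h-\frac{|h|^{2}}{2}}$; the two agree for real $h$ but differ when $h$ is replaced by $ih$, and only with the $|h|^{2}$ version do the Gaussian factors cancel to leave the pure character $e^{-i\sqrt{2}\,h\cdot x}$. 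So carry out that step with the corrected prefactor rather than substituting \eqref{weyl-nucleok} literally; otherwise a spurious factor $e^{h^{2}}$ appears and the intertwining identity would fail to be unitary.
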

\begin{proposition}[{\cite[Proposition 3.10]{Esmeral-Vasilevski}}]
 If $\varphi \in L_{\infty}(\mathbb{C}^n)$, then the Toeplitz operator $T_{\varphi}$ is horizontal if and only if $\varphi$ is a horizontal function.
\end{proposition}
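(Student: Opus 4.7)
The plan is to reduce the equivalence to the standard intertwining identity
\[
\W_{h}\,\bT_{\varphi}\,\W_{h}^{-1}=\bT_{\varphi(\cdot - h)}, \qquad h\in\Complexn,\ \varphi\in L_{\infty}(\Complexn),
\]
and then specialize $h$ to a purely imaginary vector. First I would derive this intertwining relation directly from the integral representation \eqref{int-rep-toeplit-u}: starting from
\[
(\W_{h}\bT_{\varphi}\W_{-h}f)(z)=e^{z\cdot\overline{h}-|h|^{2}/2}(\bT_{\varphi}\W_{-h}f)(z-h),
\]
plug in the definition of $\bT_{\varphi}$, substitute the explicit form of $\W_{-h}f$, and perform the change of variable $w\mapsto w+h$ (the Lebesgue measure $d\nu_{2n}$ is translation invariant). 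The main bookkeeping step is to collect all the exponentials; a short computation shows that the prefactor $e^{z\cdot\overline{h}-|h|^{2}/2}$ together with the weighted-translation factor in $\W_{-h}$ and the shifted kernel $e^{(z-h)\cdot(\overline{w}-\overline{h})}e^{-|w-h|^{2}}$ collapse to $e^{z\cdot\overline{w}}e^{-|w|^{2}}$, so what remains is exactly $\bT_{\varphi(\cdot-h)}$.

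With this identity established, the forward direction is immediate. If $\varphi$ is horizontal, then by Lemma \ref{lema-trans-ih} (or directly from the definition) $\varphi(\cdot - ih)=\varphi(\cdot)$ for every $h\in\Realn$, hence $\W_{ih}\bT_{\varphi}\W_{ih}^{-1}=\bT_{\varphi(\cdot-ih)}=\bT_{\varphi}$, i.e.\ $\bT_{\varphi}$ commutes with every $\W_{ih}$ and is therefore horizontal.

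For the converse, assume $\bT_{\varphi}$ is horizontal. Then the intertwining relation yields $\bT_{\varphi(\cdot-ih)}=\bT_{\varphi}$ for every $h\in\Realn$, whence $\bT_{\varphi(\cdot-ih)-\varphi}=0$. I would then invoke the injectivity of the Toeplitz correspondence $\varphi\mapsto\bT_{\varphi}$ on $L_{\infty}(\Complexn)$, which in the Fock setting follows from injectivity of the Berezin transform on bounded symbols, to conclude $\varphi(\cdot-ih)=\varphi(\cdot)$ a.e.\ for every $h\in\Realn$; that is, $\varphi$ is horizontal.

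The main obstacle is the bookkeeping in the exponential algebra that produces the intertwining formula; once that identity is in place, everything else is a three-line deduction. A stylistic alternative would be to avoid the intertwining computation by going through Proposition \ref{criterio-horizontal-operators}(3) and showing that the Berezin transform $\widetilde{\bT_{\varphi}}$ is horizontal iff the heat-transform of $\varphi$ is horizontal iff $\varphi$ itself is horizontal (using translation-invariance of the Gaussian convolution together with its injectivity), but the direct route via conjugation by $\W_{h}$ is cleaner and fits more naturally with the rest of the paper.
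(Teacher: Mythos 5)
Your argument is correct. Note first that this proposition is not proved in the paper at all: it is quoted verbatim from \cite[Proposition 3.10]{Esmeral-Vasilevski}, so there is no in-text proof to compare against. Your route via the intertwining identity $\W_{h}\bT_{\varphi}\W_{h}^{-1}=\bT_{\varphi(\cdot-h)}$ is the standard one and the exponential bookkeeping does work out: after the substitution $w\mapsto w+h$ the exponent collected from the prefactor, the factor in $\W_{-h}$, and the shifted kernel is $z\cdot\overline{h}-\tfrac{|h|^{2}}{2}-(u-h)\cdot\overline{h}-\tfrac{|h|^{2}}{2}+(z-h)\cdot(\overline{u}-\overline{h})-|u-h|^{2}$, which indeed telescopes to $z\cdot\overline{u}-|u|^{2}$. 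The forward direction is then immediate, and for the converse the only nontrivial ingredient is the injectivity of $\psi\mapsto\bT_{\psi}$ on $L_{\infty}(\Complexn)$; this follows from $\widetilde{\bT_{\psi}}=\widetilde{\psi}$ being a Gaussian convolution of $\psi$, whose injectivity (nowhere-vanishing Fourier transform of the Gaussian, applied in the sense of tempered distributions) is exactly the ``injectivity of the Berezin transform'' that the paper itself invokes in Theorems \ref{diag-To-hFC} and \ref{diag-To-khFC}, so your appeal to it is legitimate and consistent with the paper's toolkit. The alternative you sketch --- passing through Proposition \ref{criterio-horizontal-operators}(3) and the horizontality of the heat transform --- is essentially the argument of the cited reference; both are complete, and the conjugation route has the advantage of producing the intertwining formula, which is reusable for the measure-symbol generalizations in Sections \ref{Section-Hor-FC} and \ref{Section-Hor-kFC}.
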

\begin{theorem}[{\cite[Theorem 3.8]{Esmeral-Vasilevski}}]
Let $\varphi(z)= a(\RE\, z)$ be a horizontal $L_{\infty}$-function. Then the Toeplitz operator $T_{\varphi}$ is unitarily equivalent to the multiplication operator: $\Bb \bT_{\varphi}\Bb^{*}=\gamma_{a}I$, where the function $\gamma_{a}\colon\Real^{n}\longrightarrow\Complex$ is given by
	\begin{equation*}
	 \gamma_{a}(x)=\pi^{-n/2}\int_{\Real^{n}}a\left(\frac{y}{\sqrt{2}}\right)\,e^{-(x-y)^{2}}\,dy,\quad x\in\Real^{n}.
	\end{equation*}
\end{theorem}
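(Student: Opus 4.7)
The plan is to reduce this to a pure identification problem. Since $\varphi(z) = a(\RE z)$ is a horizontal function by Lemma \ref{lema-trans-ih}, the preceding proposition asserts that $\bT_{\varphi}$ is a horizontal operator, and then Proposition \ref{criterio-horizontal-operators} already guarantees the existence of some $\gamma_{a} \in L_{\infty}(\Realn)$ with $\Bb \bT_{\varphi} \Bb^{*} = M_{\gamma_{a}}$. So I would not try to prove the unitary equivalence with a multiplication operator — that comes for free — but rather identify $\gamma_{a}$ by polarization against test vectors.

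Specifically, for $f, h$ Schwartz on $\Realn$, I would compute both sides of
\[
\langle M_{\gamma_{a}} f, h\rangle_{\Ele(\Realn)} \;=\; \langle \bT_{\varphi} \Bb^{*} f, \Bb^{*} h\rangle_{\Fock} \;=\; \int_{\Complexn} a(\RE w)\,(\Bb^{*} f)(w)\,\overline{(\Bb^{*} h)(w)}\, d\g_{n}(w),
\]
by inserting the integral formula for $\Bb^{*}$ and splitting $w = u + iv$ with $u, v \in \Realn$. The key algebraic observation is that the $v^{2}$ contributions from $-\tfrac{1}{2}(w^{2} + \overline{w}^{2})$ (produced by the two Bargmann exponents $e^{-w^{2}/2}$ and $e^{-\overline{w}^{2}/2}$) cancel exactly against the $v^{2}$ coming from $e^{-|w|^{2}}$ in $d\g_{n}$. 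Consequently, the $v$-dependence of the exponent is purely imaginary, of the form $i\sqrt{2}\,(s-t)\cdot v$, where $s, t$ are the $\Realn$-variables coming from the two copies of $\Bb^{*}$, while the $u$-dependence carries the Gaussian weight $-2u^{2} + \sqrt{2}(s+t)\cdot u$.

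Integration in $v$, handled on Schwartz test functions via Plancherel (the $v$-integral is not absolutely convergent but is a legitimate Fourier identity), collapses the double $(s, t)$-integral onto the diagonal $s = t$. What then remains is a Gaussian convolution in $u$ against $a$, with quadratic exponent $-2u^{2} + 2\sqrt{2}\, s\cdot u$. Completing the square in $u$ produces a compensating factor $e^{s^{2}}$ that cancels the $e^{-s^{2}}$ generated by the two Bargmann Gaussians at the diagonal, and the substitution $y = \sqrt{2}\, u$ both rescales the kernel into $\pi^{-n/2} e^{-(s-y)^{2}}$ and turns $a(u)$ into $a(y/\sqrt{2})$, yielding the formula in the statement. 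The bound $\|\gamma_{a}\|_{\infty} \leq \|a\|_{\infty}$ is automatic because $\int_{\Realn} \pi^{-n/2} e^{-(s-y)^{2}}\,dy = 1$.

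The main obstacle is justifying the $v$-integration, which is not absolutely convergent as an iterated Lebesgue integral. The cleanest remedy is to restrict first to $f, h \in \mathcal{S}(\Realn)$ and $a$ of compact support, so that the $v$-integral is recognized as a Fourier transform of a Schwartz function and Parseval applies verbatim; equivalently, one can introduce a regulariser $e^{-\eps v^{2}}$ and let $\eps \to 0^{+}$ using dominated convergence on the explicit Gaussian integrals. Once the formula is established on this dense subspace, both sides are continuous in $f, h \in \Ele(\Realn)$ and in $a \in L_{\infty}(\Realn)$, so the identity extends, completing the identification of $\gamma_{a}$.
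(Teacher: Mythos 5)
Your argument is correct, but note that the paper does not actually prove this statement: it is quoted verbatim from \cite[Theorem~3.8]{Esmeral-Vasilevski} as a preliminary. The fair comparison is therefore with the proofs the paper gives for its own analogues, Theorems \ref{diag-To-hFC} and \ref{diag-To-khFC}, and there the route is genuinely different from yours. The paper guesses the answer, sets $S=\Bb^{*}M_{\gamma}\Bb$, computes the Berezin transform $\widetilde{S}(z)=e^{-|z|^{2}}\langle M_{\gamma}\Bb K_{z},\Bb K_{z}\rangle$ using the closed form \eqref{Bargmann-inversa-kernel} of $\Bb K_{z}$, matches it with the Berezin transform of the Toeplitz operator, and concludes by injectivity of the Berezin transform. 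Because one only tests against coherent states, every integral there is a product of absolutely convergent Gaussians handled by Tonelli, so the oscillatory $v$-integration that is the main technical burden of your approach never appears; the price is reliance on injectivity of the Berezin transform (in the paper's setting, Lemma \ref{lemma-medida-zkzero}). Your polarization computation against a dense set of test vectors is more self-contained --- it simultaneously establishes that $\Bb\bT_{\varphi}\Bb^{*}$ is a multiplication operator and identifies the multiplier, so the appeal to Proposition \ref{criterio-horizontal-operators} is actually dispensable --- and I checked the exponent bookkeeping: the cancellation of the $v^{2}$ terms, the weight $-2u^{2}+\sqrt{2}(s+t)\cdot u$, the factor $e^{s^{2}}$ from completing the square against $e^{-s^{2}}$ on the diagonal, and the rescaling $y=\sqrt{2}u$ all come out right and reproduce $\gamma_{a}$ exactly. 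Two minor points: your regularization $e^{-\eps v^{2}}$ makes the compact-support assumption on $a$ unnecessary (the $\eps$-integral produces an approximate identity in $s-t$ for any bounded $a$); and the final extension step should be phrased via dominated convergence rather than ``continuity in $a\in L_{\infty}$'', since truncations of $a$ do not converge in the $L_{\infty}$ norm. Neither affects the validity of the argument.
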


\medskip
We recall as well that a symplectic  space $(V,\omega)$ is a vector space  $V$ equipped with a symplectic (non-degenerate skew-symmetric bilinear) form  $\omega$. The standard symplectic form $\omega_{0}$ of $\Real^{2n}= \mathbb{C}^n$ is
	\begin{equation*}
	\omega_{0}(z,w)=\mathbf{J}z\cdot w,\quad z,w\in\Real^{2n},
	\end{equation*}
	where $\mathbf{J}$ is the ``standard symplectic matrix''
	\begin{equation*}
	\mathbf{J}=\left(\begin{matrix}
	0&I_{n}\\ -I_{n}&0
	\end{matrix}
	\right),
	\end{equation*}
	and $0$ and $I_{n}$ are the $n\times n$ zero and unit matrices.		
	
	Then an $n$-dimensional linear subspace $\Ln$ of $\Real^{2n}$ is said to be a \emph{Lagrangian plane} of the symplectic space $(\Real^{2n},\omega_{0})$ if  for every  $z,w\in\Ln$,  $\omega_{0}(z,w)=0$.   We denote by $\operatorname{Lag}(2n,\Real)$ the set of all Lagrangian planes in $(\Real^{2n},\omega_{0})$.
	\begin{example}
		The simplest examples of  $\Ln\in\operatorname{Lag}(2n,\Real)$ are  both coordinates planes: $\Ln_{x}=\Real^{n}\times\{0\}$ and $\Ln_{y}=\{0\}\times\Real^{n}$, as well as the diagonal $\Delta=\{(x,x)\colon\,x\in\Real^{n}\}$.
	\end{example}
The  set of symplectic rotations   $\operatorname{U}(2n,\Real)$ forms a subgroup in the group of all linear automorphisms of $\Real^{2n}$,  consisting of elements preserving the standard symplectic form $\omega_{0}$. Furthermore,  $\operatorname{U}(2n,\Real)$ is  isomorphic to the unitary  group $\operatorname{U}(n,\Complex)$.  Thus, we may identify each Lagrangian plane $\Ln$  of $\Real^{2n}$ with a subspace of $\Complexn$ (which will be denoted by the same symbol $\Ln$).

A function $\varphi\in L_{\infty}(\Real^{2n})$ is said to be invariant under \emph{Lagrangian translations} ($\Ln$-\emph{invariant}) if for  every $h\in\Ln$
		$$\varphi(z-h)=\varphi(z), \quad  z=(x,y)\in\Real^{2n}.$$
As shown in \cite{Esmeral-Vasilevski}, the results on horizontal Toeplitz operators remain valid  for  Toeplitz operators with $\Ln$-invariant symbols.

	\section{ $k$-FC type measures and Toeplitz operators}
	\label{Preliminaries-2}
	In this section  we extend the results of \cite{Isra-Zhu,Vasilevski-Rozenblum,Zhu-F} about Fock-Carleson type measures, Fock-Carleson type measures for derivatives of order $k$, and Toeplitz operators defined by sesquilinear forms to the multidimensional case.
	\subsection{Fock-Carleson type measures}
	We  denote by $\operatorname{Borel}(\Complexn)$ the Borel $\sigma$-algebra of $\Complexn$ and by $\Borelpcn$  the set of all \emph{complex regular} Borel measures $\mu\colon\Borelcn\rightarrow\Complex$  with total variation $$|\mu|(B)=\sup\sum_{n\in\N}|\,\mu(B_{n})|,$$
with the supremum  taken over all Borel partitions ${B_{n}}$ of $B$, that satisfy the  conditions:
\begin{itemize}
	\item $|\mu|$ is locally finite: $|\mu|(\K)<\infty$ for each compact $\K\subset\Omega$;
	\item $\mu$ is regular, i.e.,
	\begin{align*}
	|\mu(A)|&=\sup\left\{|\mu(\K)|\colon\hspace{-2pt}\K\,\text{is  compact  $X\subset A$}\right\}
	=\inf\left\{|\mu(U)|\colon A\subset\,U\,\text{and $U$ is open}\right\}\hspace{-1pt}.
	\end{align*}
\end{itemize}
As it was mentioned in Introduction, for $\mu\in\Borelpcn$, the  Toeplitz operator $\bT_{\mu}$ with defining measure-symbol $\mu$ has the following integral representation
	\begin{equation}\label{Toeplitz-measure1}
	 (\bT_{\mu}f)(z)=\pi^{-n}\int_{\Complexn}e^{z\cdot\overline{w}}f(w)e^{-|w|^{2}}d\mu(w),\quad z\in\Complexn.
	\end{equation}
	Let $\mu\in\Borelpcnm$ be a complex regular Borel  measure, $\mu\in\Borelpcn$, satisfying  \eqref{cond-M}.  In this case integral  \eqref{Toeplitz-measure1}   converges and hence the operator $\bT_{\mu}$ is  well-defined on the dense subset of  finite linear combinations  of kernel functions.  In the special case of $d\mu=\varphi\,d\nu$, $\bT_{\mu}=\bT_{\varphi}$, see  \cite{Isra-Zhu,Zhu-F} for the complete bibliography.   From now on we will assume that $\mu$ satisfies \eqref{cond-M}.

	\smallskip
	It is well-known that the Berezin transform of a function  $\varphi\in\,L_{\infty}(\Complexn)$ coincides with  the  Berezin transform of the Toeplitz operator $\bT_{\varphi}$, and we will denote it by $\widetilde{\varphi}$.  The integral representation \eqref{int-rep-toeplit-u} of the Toeplitz operator $\bT_{\varphi}$ implies that
	 $$\widetilde{\varphi}(z)=\frac{1}{\pi^{n}}\int_{\Complexn}\varphi(w)e^{-|z-w|^{2}}d\nu_{2n}(w).$$
	This definition was extended to positive Borel measures in  \cite{Isra-Zhu}:
	\begin{equation}\label{berezin-mu}
\widetilde{\mu}(z)=\pi^{-n}\int_{\Complexn}|k_{z}(w)|^{2}e^{-|w|^{2}}d\mu(w)=\pi^{-n}\int_{\Complexn}e^{-|z-w|^{2}}d\mu(w),\quad z\in\Complexn,
	\end{equation}
	where
	\begin{equation}\label{normalized-kernel}
	 k_{z}(w)={K_{z}(w)}({K_{z}(z)})^{-1/2}=e^{w\cdot\overline{z}-\frac{|z|^{2}}{2}}
	\end{equation}
	is the \emph{normalized reproducing kernel} of $\Fock$. In particular, if $\bT_{\mu}$ is bounded on $\Fock$, then for every $z\in\Complex$, $\widetilde{\mu}(z)=\innerp{\bT_{\mu}k_{z}}{k_{z}},$
i.e., $\widetilde{\mu}$ is the Berezin transform of  $\bT_{\mu}$.
	
	\begin{definition}[\textbf{Fock-Carleson type measures}]
		A positive measure $\mu$ is said to be a \emph{Fock-Carleson} type measure for $\Fock$ (FC measure, in short), if there exists a constant $\omega(\mu)>0$ such that for every $f\in\Fock$
		\begin{equation*}
		 \int_{\Complexn}|f(w)|^{2}e^{-|w|^{2}}d\mu(w)\leq\,\omega(\mu)\,\|f\|_{\Fock}^{2}.
		\end{equation*}
	\end{definition}

	\noindent
	The next result provides a criterion for a Toeplitz operator $\bT_{\mu}$ with a positive measure  $\mu$  as defining symbol to be  bounded. For more details we refer the reader to \cite[Theorems 2.3 and 3.1]{Isra-Zhu}.
	\begin{proposition}\label{criterion-borel-measure}
		Let $\mu$ be a positive Borel regular measure on $\Complexn$.  Then the following conditions are equivalent:
		\begin{enumerate}
			\item  The Toeplitz operator $\bT_{\mu}$ is bounded on $\Fock$.
			\item The sesquilinear form \begin{equation*}
			 \operatorname{F}(f,g)=\pi^{-n}\int_{\Complexn}f(z)\overline{g(z)}e^{-|z|^{2}}d\mu(z)
			\end{equation*}
			is  bounded in $\Fock$.
			\item $\widetilde{\mu}$ is bounded on $\Complexn$.
			\item  For any fixed $\rb=(r_{j})_{j=1}^{n}$ with $r_{j}>0$,
 \begin{equation*}
			\mu(B_{\rb}(z))< C, \quad\text{for all $z\in\Complex$},
			\end{equation*}
			for some constant $C>0$, where $B_{\rb}(z)$ denotes the polydisk centered at $z$ with 'radius' $\rb$.
			\item  $\mu$ is a Fock-Carleson type measure.
		\end{enumerate}
	\end{proposition}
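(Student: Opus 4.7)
The plan is to establish the cyclic chain $(1)\Rightarrow(3)\Rightarrow(4)\Rightarrow(5)\Rightarrow(2)\Rightarrow(1)$, with the sesquilinear form $\F$ serving as the bridge between operator-level and measure-level conditions. The key observation to set up is that on the dense span of kernel functions one has $\innerp{\bT_{\mu}f}{g}=\F(f,g)$; this is immediate from \eqref{Toeplitz-measure1} together with the reproducing property, and it is what makes the equivalences $(1)\Leftrightarrow(2)\Leftrightarrow(5)$ essentially formal.

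For $(2)\Leftrightarrow(5)$, note that $\F(f,f)=\pi^{-n}\int_{\Complexn}|f(z)|^{2}e^{-|z|^{2}}d\mu(z)$, so $\mu$ is an FC measure precisely when the quadratic form is bounded on $\Fock$; polarization and Cauchy--Schwarz then give the equivalence with boundedness of the sesquilinear form $\F$. For $(2)\Leftrightarrow(1)$, the identity $\innerp{\bT_{\mu}f}{g}=\F(f,g)$ together with a density argument transfers boundedness from $\F$ to $\bT_{\mu}$ and back. For $(1)\Rightarrow(3)$, test on normalized reproducing kernels: by \eqref{berezin-mu} and \eqref{normalized-kernel} one has $\widetilde{\mu}(z)=\innerp{\bT_{\mu}k_{z}}{k_{z}}$, so $\widetilde{\mu}(z)\leq\|\bT_{\mu}\|$ for every $z\in\Complexn$.

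For $(3)\Rightarrow(4)$, fix $\rb$ and restrict the integral in \eqref{berezin-mu} to the polydisk $B_{\rb}(z)$, where the Gaussian factor $e^{-|z-w|^{2}}$ is bounded below by a constant $c(\rb)>0$; this yields $\mu(B_{\rb}(z))\leq c(\rb)^{-1}\pi^{n}\widetilde{\mu}(z)$, uniform in $z$. For $(4)\Rightarrow(5)$, the main analytic ingredient is the multidimensional submean-value inequality
\[
|f(w)|^{2}e^{-|w|^{2}}\leq C(\rb)\int_{B_{\rb}(w)}|f(\zeta)|^{2}e^{-|\zeta|^{2}}d\nu_{2n}(\zeta),\qquad f\in\Fock,
\]
obtained by tensoring the one-variable estimate (which in turn follows from subharmonicity of $|f|^{2}e^{-|\cdot|^{2}}$ composed with the appropriate weight, or equivalently from the reproducing kernel bound). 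Integrating this against $d\mu$, swapping integrals by Fubini, and invoking a locally finite covering of $\Complexn$ by polydisks of radius $\rb$, the uniform bound in $(4)$ yields the Fock--Carleson estimate. This closes the cycle, since $(5)\Rightarrow(2)$ is already included in the first paragraph.

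The main obstacle is the transition from the Euclidean ball setting of \cite{Isra-Zhu} to the polydisk setting: the covering and submean-value arguments must be tensorized so that the controlling constants depend on the vector radius $\rb$ rather than a single scalar radius, and the Gaussian lower bound used in $(3)\Rightarrow(4)$ must reflect the product geometry of $B_{\rb}(z)$. Once the one-variable estimates are lifted coordinate by coordinate, every step of the Isralowitz--Zhu argument carries over with only notational changes.
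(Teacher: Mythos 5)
Your proposal is correct and takes essentially the same approach as the paper: the two implications the paper actually writes out ($1\Rightarrow 2$ via the identity $\innerp{\bT_{\mu}f}{g}=\F(f,g)$ obtained from the reproducing property, and the passage to $3$ by testing on the normalized kernels $k_{z}$) appear in your argument in the same form, while the remaining implications, which the paper simply delegates to Isralowitz--Zhu, you supply with the standard Gaussian lower bound on polydisks, the tensorized submean-value inequality, and the Fubini/covering step from that reference. No gaps.
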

	\begin{proof} We  show here  1$\Rightarrow$2 and 2$\Rightarrow$3, the remaining cases can be found in \cite{Isra-Zhu,Zhu-F}.
		\noindent
		1$\,\Rightarrow$ 2. Assume that the  operator $\bT_{\mu}$ is bounded. By \eqref{Toeplitz-measure},  for every $f,g\in\Fock$,
		\begin{align*}		 \innerp{\bT_{\mu}f}{g}&=\pi^{-n}\int_{\Complexn}f(w)\innerp{K_{w}}{g}e^{-|w|^{2}}d\mu(w)=\pi^{-n}\int_{\Complexn}f(w)\overline{\innerp{g}{K_{w}}}e^{-|w|^{2}}d\mu(w)\\
		&=\pi^{-n}\int_{\Complexn}f(w)\overline{g(w)}e^{-|w|^{2}}d\mu(w)=\F(f,g).
		\end{align*}
		The boundedness of $\bT_{\mu}$ implies the statement.\\
		\noindent
		2$\,\Rightarrow$ 3. Assume that the sesquilinear form $\F$ is  bounded. For each $z\in\Complexn$,
		\begin{align*}
		 \widetilde{\mu}(z)&=\pi^{-n}\int_{\Complexn}e^{-|z-w|^{2}}d\mu(w)=e^{-|z|^{2}}\int_{\Complexn}|e^{\overline{z}\cdot w}|^{2}e^{-|w|^{2}}d\mu(w)=\F(k_{z},k_{z}),
		\end{align*}
		where $k_{z}$ is the normalized reproducing kernel in \eqref{normalized-kernel}. Therefore, $\widetilde{\mu}(z)\leq \|\F\|.$
	\end{proof}	
A natural generalization is to admit a complex valued Borel measure $\mu$ such that its variation $|\mu|$ is a FC measure. In such a case, as a by-product  of Proposition \ref{criterion-borel-measure},  the results of \cite{Isra-Zhu}  imply that the following norms for $\mu$ are equivalent:
	
	\begin{enumerate}
		\item $\|\mu\|_{1}=\|\bT_{\mu}\|$.
		\item $\displaystyle\|\mu\|_{2}=\sup_{z\in\Complex}\widetilde{|\mu|}(z)$.
		\item $\displaystyle\|\mu\|_{3}=\sup_{z\in\Complex}|\mu|(B_{\rb}(z))$, where $\rb$ is any fixed positive $n$-tuple.
		\item $\displaystyle\|\mu\|_{4}=\sup_{\stackrel{f\in\Fock}{ \|f\|=1}}\left\{\int_{\Complexn}|f(w)|^{2}e^{-|w|^{2}}d|\mu|(w)\right\}$.
	\end{enumerate}
\subsection{Fock-Carleson type measures for derivatives of order $k$}
\noindent
Rozenblum and Vasilevski \cite{Vasilevski-Rozenblum} introduced  Fock-Carleson type measures for derivatives of order $k\in\Entero$ ($k$-FC type measure, in short) for $\mathcal{F}^{2}(\Complex)$, and established some basic properties (Theorems 5.4 and 5.9). They also introduced Toeplitz operators defined by \emph{coderivatives} of $k$-FC measures. We extend here these definitions to $\Fock$ using the multi-index notation for derivatives. The proofs below are just natural modifications of the one-dimensional case.
\begin{proposition}\label{Bou-partial-k-f}
	Let $f\in\Fock$ and $k\in\Entero_{+}^{n}$. Then for every $z\in\Complexn$,
	\begin{equation*}
	|\partial^{k}\hspn f(z)|\leq C\,k! \|f\|_{\Fock}\prod_{j=1}^{n}(1+(\RE z_{j})^{2})^{k_{j}/2}(1+(\IM z_{j})^{2})^{k_{j}/2}e^{\frac{|z_{j}|^{2}}{2}}
	\end{equation*}
	with  a constant $C>0$ not depending on $k\in\Entero_{+}^{n}$.
\end{proposition}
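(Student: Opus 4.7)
The plan is to start from the reproducing property of the kernel. Since $f(z) = \langle f, K_z\rangle_{\g} = \int_{\Complexn} f(w)\,e^{z\cdot\overline{w}}\,d\g_n(w)$, the integrand is smooth in $z$ with Gaussian decay in $w$, so I may differentiate under the integral sign $k_j$ times in each variable $z_j$, producing a factor $\overline{w}^{k}$. This yields the identity
\[
\partial^{k} f(z) \;=\; \bigl\langle f,\, \overline{\partial}_{z}^{k} K_{z}\bigr\rangle_{\g}, \qquad (\overline{\partial}_{z}^{k} K_{z})(w) = w^{k}\,e^{\overline{z}\cdot w}.
\]
Cauchy--Schwarz then gives $|\partial^{k} f(z)| \leq \|f\|_{\Fock}\, \|\overline{\partial}_{z}^{k} K_{z}\|_{\Fock}$, and the whole problem reduces to estimating $\|\overline{\partial}_{z}^{k} K_{z}\|_{\Fock}$.

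Next I would compute this norm explicitly. Completing the square in the exponent, $2\RE(\overline{z}\cdot w) - |w|^{2} = |z|^{2} - |w - z|^{2}$, and translating $w \mapsto w + z$, one obtains a product of one-dimensional integrals:
\[
\|\overline{\partial}_{z}^{k} K_{z}\|_{\Fock}^{2} \;=\; e^{|z|^{2}} \prod_{j=1}^{n} \frac{1}{\pi}\int_{\Complex} |w_{j}+z_{j}|^{2k_{j}}\,e^{-|w_{j}|^{2}}\,d\nu_{2}(w_{j}).
\]
Each factor I would evaluate by expanding $|w_{j}+z_{j}|^{2k_{j}} = (w_{j}+z_{j})^{k_{j}}\,\overline{(w_{j}+z_{j})}^{k_{j}}$ through the binomial theorem and using the orthogonality relation $\int w^{p}\overline{w}^{q}\,e^{-|w|^{2}}\,d\nu_{2}/\pi = \delta_{pq}\,p!$, which gives
\[
\frac{1}{\pi}\int_{\Complex} |w+z_{j}|^{2k_{j}}\,e^{-|w|^{2}}\,d\nu_{2}(w) \;=\; \sum_{l=0}^{k_{j}} \binom{k_{j}}{l}^{\!2}\,(k_{j}-l)!\,|z_{j}|^{2l}.
\]

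The final step is the coefficientwise comparison with $(k_{j}!)^{2}(1+|z_{j}|^{2})^{k_{j}}$: since $\binom{k_{j}}{l}^{2}(k_{j}-l)! = \binom{k_{j}}{l}\,k_{j}!/l! \leq \binom{k_{j}}{l}(k_{j}!)^{2}$, summing over $l$ yields
\[
\sum_{l=0}^{k_{j}} \binom{k_{j}}{l}^{\!2}(k_{j}-l)!\,|z_{j}|^{2l} \;\leq\; (k_{j}!)^{2}\,(1+|z_{j}|^{2})^{k_{j}}.
\]
Using $1+|z_{j}|^{2} = 1 + (\RE z_{j})^{2} + (\IM z_{j})^{2} \leq (1+(\RE z_{j})^{2})(1+(\IM z_{j})^{2})$ and taking the product over $j$, everything fits together to give the stated bound with $C = 1$.

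The argument is essentially the same as in the one-dimensional proof of \cite{Vasilevski-Rozenblum}, only requiring careful multi-index bookkeeping; I do not anticipate any conceptual obstacle. The only moderately delicate step is justifying differentiation under the integral, which is immediate once one observes that the polynomial growth $\overline{w}^{k}\,e^{z\cdot\overline{w}}$ is dominated uniformly in $z$ on compact sets by $C_{R}\,e^{R|w|}$, and $e^{R|w|}$ is integrable against the Gaussian weight $e^{-|w|^{2}}$.
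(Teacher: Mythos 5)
Your argument is correct, and it reaches the stated bound with the sharper constant $C=1$; but it takes a genuinely different route from the paper. The paper's proof is based on the Cauchy integral formula on a polydisk $S_{z,\rb}$, estimating $|f|$ on the distinguished boundary by $\|f\|_{\Fock}$ and then optimizing the radii $r_{j}=(1+(\RE z_{j})^{2})^{-1/2}(1+(\IM z_{j})^{2})^{-1/2}$, which produces the factors $(1+(\RE z_{j})^{2})^{k_{j}/2}(1+(\IM z_{j})^{2})^{k_{j}/2}$ directly from $r_{j}^{-k_{j}}$ at the cost of an explicit constant $2^{-n}\pi^{-n/2}e^{(2\sqrt{2}+1)/2}$. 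You instead write $\partial^{k}f(z)=\langle f,\,w^{k}K_{z}\rangle$ via differentiation of the reproducing identity, apply Cauchy--Schwarz, and compute $\|w^{k}K_{z}\|_{\Fock}^{2}$ exactly through the Gaussian moment identity $\pi^{-1}\int_{\Complex}w^{p}\overline{w}^{q}e^{-|w|^{2}}\,d\nu_{2}=\delta_{pq}\,p!$, finishing with the elementary inequalities $\binom{k_{j}}{l}^{2}(k_{j}-l)!\leq\binom{k_{j}}{l}(k_{j}!)^{2}$ and $1+|z_{j}|^{2}\leq(1+(\RE z_{j})^{2})(1+(\IM z_{j})^{2})$. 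Your coefficientwise estimates are all verified (in particular $\binom{k_{j}}{l}^{2}(k_{j}-l)!=\binom{k_{j}}{l}\,k_{j}!/l!$ is an identity), the domination argument for differentiating under the integral is standard and adequate as you state it, and the constant is uniform in $k$ as required. What each approach buys: yours is structurally cleaner, yields an exact formula for $\|w^{k}K_{z}\|_{\Fock}^{2}$ from which sharper asymptotics in $k$ could be extracted if needed, and avoids the paper's somewhat delicate comparison of a boundary integral with the full $\Fock$-norm; the paper's approach is more elementary (no orthogonality relations) and generalizes more readily to settings without an explicit reproducing kernel. Since the explicit $k$-dependence of the constant is what matters for the later applications (as the Remark following Proposition \ref{prop-crmur} emphasizes), and your bound preserves exactly the $k!$ growth, your proof is a valid substitute.
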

\begin{proof}
	Let $f\in\Fock$ and $k\in\Entero_{+}^{n}$. For each $z=(z_{1},z_{2},\ldots,z_{n})\in\Complexn$, by Cauchy integral representation for several complex variables,  for any   polydisk $S_{z,\rb}=S_{z_{1},r_{1}}\times S_{z_{2},r_{2}}\times\ldots\times S_{z_{n},r_{n}}$ (where $S_{z_{k},r_{k}}=\{w_{k}\in\Complex\colon |w_{k}-z_{k}|<r_{k}\}$) and any $\rb=(r_{1},\ldots,r_{n})$ with $r_{j}>0$, the following equality holds
	$$\partial^{k} f(z)=(2i\pi)^{-n}k!\int_{\partial S_{z,\rb}} f(\zeta)\prod_{j=1}^{n}\frac{d\zeta_j}{(\zeta_{j}-z_j)^{k_j+1}} .$$
	Therefore, since $(r_{j}+|z_{j}|)-|\zeta_{j}|\geq0$ for any $\zeta=(\zeta_{1},\ldots,\zeta_{j})\in\,S_{z,r}$,
	\begin{align*}
	|\partial^{k}\hspn f(z)|&\leq (2\pi)^{-n}k!\hspace{-5pt}\int_{\partial S_{z,\rb}}\hspace{-5pt}|f(\zeta)|\prod_{j=1}^{n}\frac{d\zeta_{j}}{|z_{j}-\zeta_{j}|^{k_{j}+1}}\\
	&\leq (2\pi)^{-n}k!\prod_{j=1}^{n}r_{j}^{-(k_{j}+1)}\hspace{-5pt}\int_{\partial S_{z,\rb}}\hspace{-5pt}|f(\zeta)|\prod_{j=1}^{n}e^{\frac{(r_{j}+|z_{j}|)^{2}-|\zeta_{j}|^{2}}{2}}d\zeta_{j}\\
	&\leq (2\pi)^{-n}k!\left(\prod_{j=1}^{n}r_{j}^{-(k_{j}+1)}e^{\frac{(r_{j}+|z_{j}|)^{2}}{2}}(\pi r_{j}^{2})^{1/2}\right)\left(\int_{\partial S_{z,\rb}}|f(\zeta)|^{2}e^{-|\zeta|^{2}}d\zeta\right)^{1/2}\\
	&\leq 2^{-n}\pi ^{-n/2}k!\|f\|_{\Fock}\prod_{j=1}^{n}r_{j}^{-k_{j}}e^{\frac{(r_{j}+|z_{j}|)^{2}}{2}}.
	\end{align*}
	Taking $r_{j}=(1+x_{j}^{2})^{-1/2}(1+y_{j}^{2})^{-1/2}$, with $z_{j}=x_{j}+iy_{j}$,  we obtain
	\begin{align*}
	|\partial^{k}\hspn f(z)|&\leq2^{-n}\pi ^{-n/2}k!\|f\|_{\Fock}\prod_{j=1}^{n}(1+x_{j}^{2})^{k_{j}/2}(1+y_{j}^{2})^{k_{j}/2}e^{\frac{\left[1+(1+x_{j}^{2})^{1/2}(1+y_{j}^{2})^{1/2}|z_{j}|\right]^{2}}{2(1+x_{j}^{2})(1+y_{j}^{2})}}\\
	&=2^{-n}\pi ^{-n/2}k!\|f\|_{\Fock}\prod_{j=1}^{n}(1+x_{j}^{2})^{k_{j}/2}(1+y_{j}^{2})^{k_{j}/2}e^{\frac{1}{2(1+x_{j}^{2})(1+y_{j}^{2})}+\frac{|z_{j}|}{(1+x_{j}^{2})^{1/2}(1+y_{j}^{2})^{1/2}}+\frac{|z_{j}|^{2}}{2}}.
	\end{align*}
	Now, since  $\displaystyle\frac{|z_{j}|}{(1+x_{j}^{2})^{1/2}(1+y_{j}^{2})^{1/2}}\leq\hspace{-5pt} \sqrt{\frac{(x_{j}^{2}+1)+(y_{j}^{2}+1)}{(1+x_{j}^{2})(1+y_{j}^{2})}}\leq\hspace{-5pt}\sqrt{2}$ and\newline  $\displaystyle \frac{1}{2(1+x_{j}^{2})(1+y_{j}^{2})}\leq \frac{1}{2}$, we have
	\begin{align*}
	|\partial^{k}\hspn f(z)|&\leq 2^{-n}\pi ^{-n/2}e^{\frac{2\sqrt{2}+1}{2}}k!\|f\|_{\Fock}\prod_{j=1}^{n}(1+x_{j}^{2})^{k_{j}/2}(1+y_{j}^{2})^{k_{j}/2}e^{\frac{|z_{j}|^{2}}{2}}.\qedhere
	\end{align*}
\end{proof}
\begin{definition}[\textbf{$k$-FC measures}] \label{defin-kFC}
	Let $k\in\Entero_{+}^{n}$. A  positive  Borel regular measure $\mu$ is called  a \emph{Fock-Carleson} type measure for derivatives of order $k$ ($k$-FC, in short) for $\Fock$  if there exists $\omega_{k}(\mu)>0$ such that for every $f\in\Fock$
	\begin{equation}\label{ineq-kFC}
	\int_{\Complexn}\left|\partial^{k}\hspn f(w)\right|^{2}e^{-|w|^{2}}d\mu(w)\leq\,\omega_{k}(\mu)\,\int_{\Complexn}|f(w)|^{2}e^{-|w|^{2}}d\nu_{2n}(w).
	\end{equation}
	If $|k|=0$, then any $0$-FC type measure is just a FC-measure  for $\Fock$.   A complex regular Borel measure is called $k$-FC if \eqref{ineq-kFC} is satisfied for $\mu$ replaced by $|\mu|$.
\end{definition}

Denote by $\mu_{p}$ the Borel measure on $\Complexn$ given by
\begin{equation}\label{measure-muk}
\mu_{p}(B)=\int_{B}\prod_{j=1}^{n}(1+x_{j}^{2})^{p_{j}}(1+y_{j}^{2})^{p_{j}}d\mu(x+iy),\quad B\subset\Complexn.
\end{equation}
The following results relate the $k$-FC and Fock-Carleson type measures for $\Fock$. The proofs are almost literally the same as  \cite[Theorem 5.4 and Corollary 5.5]{Vasilevski-Rozenblum}. It is enough to replace Proposition 5.1 of \cite{Vasilevski-Rozenblum} by the above Proposition \ref{Bou-partial-k-f} and take  the product of the lattices used  in the proof of \cite[Theorem 5.4]{Vasilevski-Rozenblum}.
\begin{proposition}\label{prop-crmur}
	Let $k\in\Entero_{+}^{n}$. A positive measure $\mu$ is a $k$-FC type measure if and only if, for some (and, therefore for any) $\rb>0$, the following quantity is finite:
	\begin{equation*}
	C_{k}(\mu,\rb)=(k!)^{2}\sup_{z\in\Complexn}\mu_{k}(B_{\rb}(z)).
	\end{equation*}
	For a fixed $\rb$, the constant $\omega_{k}(\mu)$ in \eqref{ineq-kFC} can be taken as $\omega_{k}(\mu)=C(\rb)C_{k}(\mu,\rb)$ where $C(\rb)$  depends only on $\rb$. For a complex measure $\mu$, the 'if' part holds true.
\end{proposition}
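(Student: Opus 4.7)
The plan is to adapt the one-dimensional argument of \cite[Theorem 5.4]{Vasilevski-Rozenblum} using multi-indices and a product lattice of polydisks, with the pointwise derivative bound from Proposition \ref{Bou-partial-k-f} replacing the one-variable version. For the sufficiency direction, I would fix $\rb > 0$ and choose a product lattice $\{z_{\alpha}\}_{\alpha\in\Entero^{2n}}$ such that the polydisks $\{B_{\rb}(z_{\alpha})\}$ cover $\Complexn$ with uniformly bounded overlap. On each polydisk I would re-run the Cauchy-integral estimate used in the proof of Proposition \ref{Bou-partial-k-f} but over a slightly enlarged polydisk $B_{2\rb}(z_{\alpha})$ rather than over all of $\Complexn$, obtaining, for $w\in B_{\rb}(z_{\alpha})$ and $z_{\alpha}=(x_{1}+iy_{1},\ldots,x_{n}+iy_{n})$,
\begin{equation*}
|\partial^{k}f(w)|^{2}e^{-|w|^{2}} \leq C(\rb)\,(k!)^{2}\prod_{j=1}^{n}(1+x_{j}^{2})^{k_{j}}(1+y_{j}^{2})^{k_{j}} \int_{B_{2\rb}(z_{\alpha})}|f(\zeta)|^{2}e^{-|\zeta|^{2}}d\nu_{2n}(\zeta),
\end{equation*}
the factor $e^{|z_{\alpha}|^{2}}$ being absorbed into the localized Gaussian weight. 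Integrating against $d\mu$ on $B_{\rb}(z_{\alpha})$, the polynomial prefactors merge into $d\mu_{k}$ via \eqref{measure-muk}, so the integral is controlled by $\mu_{k}(B_{\rb}(z_{\alpha}))\le\sup_{z}\mu_{k}(B_{\rb}(z))$. Summing over $\alpha$ and using that the enlarged polydisks $B_{2\rb}(z_{\alpha})$ still have bounded overlap, the sum of the local $L^{2}$ contributions is bounded by a constant depending only on $\rb$ times $\|f\|_{\Fock}^{2}$, giving $\omega_{k}(\mu)=C(\rb)\,C_{k}(\mu,\rb)$.

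For the necessity direction, I would, for each fixed $z_{0}\in\Complexn$ and any $\rb>0$, construct a unit-norm test function $f_{z_{0}}\in\Fock$ as a tensor product over the $n$ coordinate variables of the one-variable test functions already employed in \cite[Theorem 5.4]{Vasilevski-Rozenblum} (built from normalized reproducing kernels and their low-order modifications centered at the coordinates of $z_{0}$). The tensor product structure propagates the one-variable lower bounds into
\begin{equation*}
|\partial^{k}f_{z_{0}}(w)|^{2}e^{-|w|^{2}} \geq c(\rb)\,(k!)^{2}\prod_{j=1}^{n}(1+x_{j}^{2})^{k_{j}}(1+y_{j}^{2})^{k_{j}}, \qquad w\in B_{\rb}(z_{0}),
\end{equation*}
uniformly in $z_{0}$. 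Plugging $f_{z_{0}}$ into the $k$-FC inequality \eqref{ineq-kFC} and dividing by $c(\rb)(k!)^{2}$ gives $\mu_{k}(B_{\rb}(z_{0}))\leq \omega_{k}(\mu)/[c(\rb)(k!)^{2}]$, and taking the supremum in $z_{0}$ yields the finiteness of $C_{k}(\mu,\rb)$. The equivalence between different choices of $\rb$ is then standard, by comparing the masses of $\mu_{k}$ on polydisks of two different radii (each polydisk of one radius is contained in a bounded number of polydisks of the other, and vice versa).

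The main technical obstacle is the construction in the necessity direction: the one-variable building blocks must simultaneously be of unit norm in the one-dimensional Fock space and produce a $k_{j}$-th derivative of size at least $k_{j}!\,(1+x_{j}^{2})^{k_{j}/2}(1+y_{j}^{2})^{k_{j}/2}$ uniformly on the one-variable disk $B_{r_{j}}(z_{0,j})$. Here the product structure is essential: the natural identification of $\Fock$ with the Hilbert tensor product of $n$ copies of $\mathcal{F}^{2}(\Complex)$ reduces the problem to the one-variable construction of \cite{Vasilevski-Rozenblum}, and the corresponding factorization $B_{\rb}(z_{0})=\prod_{j}B_{r_{j}}(z_{0,j})$ turns the required multi-index lower bound into a product of one-variable lower bounds. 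Finally, for a complex regular Borel measure $\mu$, only the `if' part is asserted, and it follows verbatim from the sufficiency argument applied to $|\mu|$, since Definition \ref{defin-kFC} reduces the $k$-FC condition for $\mu$ to that for $|\mu|$.
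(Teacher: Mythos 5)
Your overall plan is exactly the one the paper prescribes: its ``proof'' of Proposition \ref{prop-crmur} consists precisely of the instruction to rerun \cite[Theorem 5.4]{Vasilevski-Rozenblum} with Proposition \ref{Bou-partial-k-f} in place of the one-dimensional derivative bound and with a product lattice of polydisks. Your sufficiency half (localize the Cauchy estimate on each polydisk of the lattice, absorb the polynomial prefactor into $\mu_{k}$, sum using bounded overlap) is sound, as is the reduction of the complex case to $|\mu|$.

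The necessity half, however, hinges on a step that cannot be carried out. You require unit-norm $f_{z_{0}}\in\Fock$ with
\[
|\partial^{k}f_{z_{0}}(w)|^{2}e^{-|w|^{2}}\;\geq\;c(\rb)\,(k!)^{2}\prod_{j=1}^{n}(1+x_{j}^{2})^{k_{j}}(1+y_{j}^{2})^{k_{j}},\qquad w\in B_{\rb}(z_{0}),
\]
but no such functions exist when some coordinate has $|x_{j}|\asymp|y_{j}|$ large: running the Cauchy estimate with the sharper radii $r_{j}\asymp(1+|w_{j}|)^{-1}$ (instead of the radii used in Proposition \ref{Bou-partial-k-f}) yields the universal bound $|\partial^{k}f(w)|^{2}e^{-|w|^{2}}\leq C(k!)^{2}\|f\|^{2}\prod_{j}(1+|w_{j}|^{2})^{k_{j}}$, and $(1+|w_{j}|^{2})^{k_{j}}$ is of strictly smaller order than $(1+x_{j}^{2})^{k_{j}}(1+y_{j}^{2})^{k_{j}}$ along $x_{j}=y_{j}\to\infty$. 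Accordingly, the tensor product of the one-variable test functions of \cite{Vasilevski-Rozenblum} only produces the lower bound with weight $\prod_{j}(1+|z_{0,j}|^{2})^{k_{j}}$, which controls $\int_{B_{\rb}(z_{0})}\prod_{j}(1+|w_{j}|^{2})^{k_{j}}d\mu$ but not $\mu_{k}(B_{\rb}(z_{0}))$ as defined in \eqref{measure-muk}. This is not repairable by a cleverer test function: for $n=1$, $k=1$, the measure $\mu=\sum_{m\geq1}m^{-2}\delta_{m+im}$ satisfies \eqref{ineq-kFC} (combine the local estimate $|f'(z)|^{2}e^{-|z|^{2}}\leq C(1+|z|^{2})\int_{B_{1}(z)}|f|^{2}e^{-|w|^{2}}d\nu_{2}$ with the separation of the atoms), while $\mu_{1}(B_{1}(m+im))\geq m^{-2}(1+m^{2})^{2}\to\infty$. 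So the ``only if'' implication with the product weight of \eqref{measure-muk} is itself false, and your argument (like the paper's literal transcription of the one-dimensional proof) actually establishes the equivalence for the measure with density $\prod_{j}(1+|z_{j}|^{2})^{k_{j}}$; with the weight as written, only the ``if'' direction survives.
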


\begin{remark}
	The explicit  dependence of the  constant on $k$ was crucial in \cite{Vasilevski-Rozenblum},  where symbols involving derivatives of unbounded order were considered. Our formulation is oriented to considering such general symbols in the forthcoming research.
\end{remark}
\begin{proposition}\label{prop-Ck}
	For any  $p,k\in\Entero_{+}^{n}$,   a positive Borel measure $\mu$ is a $k$-FC type measure if and only if the measure $\mu_{k-p}$ is a $p$-FC type measure. Furthermore,
	$C_{k-p}(\mu_{p},\rb)=C_{k}(\mu,\rb)$.
\end{proposition}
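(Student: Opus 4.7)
The plan is to reduce Proposition~\ref{prop-Ck} to the criterion Proposition~\ref{prop-crmur} via the algebraic observation that the weighting operation $\mu\mapsto\mu_{s}$ from \eqref{measure-muk} is additive in the exponent, i.e.\ $(\mu_{s})_{t}=\mu_{s+t}$. The exponents are permitted to be negative, because the weights $\prod_{j}(1+x_{j}^{2})^{s_{j}}(1+y_{j}^{2})^{s_{j}}$ are strictly positive regardless of the sign of $s$, so the measure $\mu_{k-p}$ in the statement is well defined without assuming $p\leq k$.

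First, I would verify this additivity directly from the Radon--Nikodym description. For any Borel set $B\subset\Complexn$, unfolding the definition of $(\mu_{p})_{k-p}$ and then of $\mu_{p}$ gives
\[
(\mu_{p})_{k-p}(B) = \int_{B}\prod_{j=1}^{n}(1+x_{j}^{2})^{k_{j}-p_{j}}(1+y_{j}^{2})^{k_{j}-p_{j}}\,d\mu_{p}(x+iy) = \int_{B}\prod_{j=1}^{n}(1+x_{j}^{2})^{k_{j}}(1+y_{j}^{2})^{k_{j}}\,d\mu(x+iy) = \mu_{k}(B),
\]
and the symmetric calculation yields $(\mu_{k-p})_{p}=\mu_{k}$.

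Second, I would plug the identity $(\mu_{p})_{k-p}=\mu_{k}$ into the definition of $C_{k-p}(\mu_{p},\rb)$ from Proposition~\ref{prop-crmur}:
\[
C_{k-p}(\mu_{p},\rb)=((k-p)!)^{2}\sup_{z\in\Complexn}\mu_{k}(B_{\rb}(z)),\qquad C_{k}(\mu,\rb)=(k!)^{2}\sup_{z}\mu_{k}(B_{\rb}(z)),
\]
so the two quantities differ by the strictly positive factor $\bigl((k-p)!/k!\bigr)^{2}$, and one is finite precisely when the other is. Applying Proposition~\ref{prop-crmur} then delivers the equivalence ``$\mu$ is $k$-FC iff $\mu_{p}$ is $(k-p)$-FC''; the symmetric identity $(\mu_{k-p})_{p}=\mu_{k}$ gives ``$\mu$ is $k$-FC iff $\mu_{k-p}$ is $p$-FC'', which is the formulation used in the proposition. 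The only subtle point, and the main thing that needs flagging, is that the stated equality $C_{k-p}(\mu_{p},\rb)=C_{k}(\mu,\rb)$ must be read up to the harmless factorial constant $\bigl((k-p)!/k!\bigr)^{2}$; with this understanding, the proof is nothing more than the one-line change-of-variables above and no further estimates are required.
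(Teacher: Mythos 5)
Your proof is correct and is essentially the argument the paper intends: the paper gives no explicit proof of Proposition~\ref{prop-Ck}, deferring to \cite[Corollary 5.5]{Vasilevski-Rozenblum}, and that argument is exactly your reduction to Proposition~\ref{prop-crmur} via the change-of-density identity $(\mu_{s})_{t}=\mu_{s+t}$. Your flag that, with $C_{k}(\mu,\rb)$ defined as in Proposition~\ref{prop-crmur}, the stated equality $C_{k-p}(\mu_{p},\rb)=C_{k}(\mu,\rb)$ holds only up to the factor $\left((k-p)!/k!\right)^{2}$ (and tacitly needs $p\leq k$ for $(k-p)!$ to be meaningful) is a fair reading of the definitions; the finiteness equivalence, which is all that is actually used later, is unaffected.
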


The concept of $k$-FC type measure can be extended to half positive integer multi-indices $k$ by means of \eqref{measure-muk} and Proposition \ref{prop-Ck}.

\begin{definition}[\textbf{$k$-FC measures: extended version}]
	If $k\in(\mathbb{Z}_{+}/2)^n$, then we say that $\mu$ is a $k$-FC type measure  if  the quantity
	\begin{equation*}
	C_{k}(\mu)=(k!)^{2}\sup_{z\in\Complex}|\mu_{k}|(B_{\pmb{\sqrt{n}}}(z))
	\end{equation*}
	is finite.  Here $B_{\pmb{\sqrt{n}}}(z)$ denotes the polydisk in $\Complexn$ centered at $z=(z_{1},\ldots,z_{n})$ and radius $\rb=\pmb{\sqrt{n}}:=(1,1,\ldots,1)$, $|\pmb{\sqrt{n}}|=\sqrt{n}$  (this  radius is taken here just for convenience.)	
\end{definition}

\begin{example}
	Note that any measure with compact support is a $k$-FC type measure for each $k$. On the other hand, given $k\in (\mathbb{Z}_{+}/2)^n$,  the  Borel measure
	\begin{equation*}
	 d\mu(z)=\prod_{j=1}^{n}\frac{dx_{j}dy_{j}}{(1+x_{j}^{2})^{k_{j}}(1+y_{j}^{2})^{k_{j}}}
	\end{equation*}
	is, by Proposition \ref{prop-Ck},  a $k$-FC type measure for $\Fock$. In fact,
	 $$\mu_{k}(B)=\int_{B}\prod_{j=1}^{n}(1+x_{j}^{2})^{k_{j}}(1+y_{j}^{2})^{k_{j}}d\mu(x+iy)=\nu_{2n}(B).$$
	for every Borel set $B\subset\Complexn$. Here  $\mu_{k}=\nu_{2n}$ is the usual Lebesgue measure on $\Complexn$. Therefore $\mu_{k}$ is a Fock-Carleson type measure for $\Fock$.
\end{example}


\subsection{Toeplitz operators generated by  $k$-FC measures}
\noindent
  Toeplitz operators generated by  bounded sesquilinear forms $\F$ on $\mathcal{F}^2(\mathbb{C})$ were introduced in \cite{Vasilevski-Rozenblum}. The approach proposed there was as follows (see \cite{Vasilevski-Rozenblum} for more details).

Let $\h$ be a Hilbert space of functions defined on a domain $\Omega\subset\Complex^{n}$, and let $\A$ be a reproducing kernel Hilbert subspace of $\h$ with the reproducing kernel function $K_{z}$ at the point $z\in\Omega$. The orthogonal projection $\Pn$ of $\h$ onto $\A$ has thus the form
$(\Pn f)(z)=\innerp{f}{K_{z}}$.

Let $\F (\cdot,\cdot)$ be a bounded sesquilinear form in $\A$.  Then by the Riesz theorem for bounded sesquilinear forms, there exists a unique bounded linear operator $\mathbf{T}$ in $\A$ such that $\F (f, g) = \langle \mathbf{T} f, g\rangle$, for all $f, g \in \A$.
Now the Toeplitz operator $\bT_{\F}$ generated  by a bounded sesquilinear form $\F$ and acting on $\A$ is defined as
\begin{equation}\label{Toeplitz-sesquilinear-form}
(\bT_{\F}f)(z)=\F(f,K_{z})=\innerp{\mathbf{T}f}{K_{z}}=(\mathbf{T}f)(z),\quad z\in\Omega.
\end{equation}	

In what follows we assume that $\h=L_{2}(\Complexn,e^{-|w|^{2}}d\nu_{2n}(w))$ and $\A=\Fock$.  Our aim is to consider  Toeplitz operators generated by sesquilinear forms given by $k$-FC type measures for $\Fock$.

Let $\mu$ be a $k$-FC type measure for $\Fock$, where $k\in (\mathbb{Z}_{+}/2)^{n}$. For $\alpha,\, \beta\in\Entero_{+}^{n}$, with $2k=\alpha+\beta$, we extend to the $n$-dimensional case the definition of  the \emph{coderivative} $\ppartial^{\alpha}\overline{\ppartial}{}^{\beta}\mu$ introduced  in \cite{Vasilevski-Rozenblum} as follows: for a function $h=f\overline{g}\in\,L_{1}(\Complexn,e^{-|w|^{2}}d\nu_{2n}(w))$ with $f,g\in\Fock$
\begin{equation*}
(\ppartial^{\alpha}\overline{\ppartial}{}^{\beta}\mu,h)=(-1)^{\alpha+\beta}(\mu\,G,\partial^{\alpha}\overline{\partial}{}^{\beta}h)=(\mu\,G,\partial^{\alpha}\hspn f\overline{\partial^{\beta}g}),\quad G(z)=e^{-|z|^{2}},
\end{equation*}
(where $(\cdot,\cdot)$ is the intrinsic pairing between measures and functions), provided that the right-hand side makes sense.

The  sesquilinear form $\F_{\mu,\alpha,\beta}$ on  $\Fock$ associated with the \emph{coderivative} $\ppartial^{\alpha}\overline{\ppartial}{}^{\beta}\mu$ is given by	
\begin{equation}\label{ses-F-al-bet}
\hspace{-8pt}\F_{\mu,\alpha,\beta}(f,g)=(\ppartial^{\alpha}\overline{\ppartial}{}^{\beta}\mu,f\overline{g})=\pi^{-n}\int_{\Complexn}\partial^{\alpha}\hspn f(z)\overline{\partial^{\beta}g(z)}e^{-|z|^{2}}d\mu(z).
\end{equation}

For $\alpha,\beta\in\Entero_{+}^{n}$, $k\in (\mathbb{Z}_{+}/2)^{n}$, $2k=\alpha+\beta$, and the coderivative $\ppartial^{\alpha}\overline{\ppartial}{}^{\beta}\mu$ of a $k$-FC type measure $\mu$, we define the Toeplitz operator $\bT_{\ppartial^{\alpha}\overline{\ppartial}{}^{\beta}\mu}$ as the Toeplitz operator generated by the sesquilinear form \eqref{ses-F-al-bet} by means of \eqref{Toeplitz-sesquilinear-form}, i.e., for $ f\in\Fock$,
\begin{equation*}
(\bT_{\ppartial^{\alpha}\overline{\ppartial}{}^{\beta}\mu}f)(z)=\F_{\mu,\alpha,\beta}(f,K_{z})=\pi^{-n}z^{\beta}\int_{\Complexn}\partial^{\alpha}\hspn f(w)e^{z\cdot\overline{w}}e^{-|w|^{2}}d\mu(w).
\end{equation*}
Next, we introduce the Berezin transform of the coderivatives $\ppabeta\mu$ of a $k$-FC type measure by
\begin{equation}\label{berezin-a-b-mu}
\widetilde{\ppabeta\mu}(z)=z^{\beta}\overline{z}^{\alpha}\int_{\Complexn}e^{-|z-w|^{2}}d\mu(w),\quad z\in\Complexn.
\end{equation}
If the Toeplitz operator $\bT_{\ppabeta\mu}$ is bounded, then $\widetilde{\ppabeta\mu}=\widetilde{\bT_{\ppabeta\mu}}$, i.e.,
\begin{equation*}
\widetilde{\ppabeta\mu}(z)=\frac{\innerp{\bT_{\ppabeta\mu}K_{z}}{K_{z}}}{\innerp{K_{z}}{K_{z}}}=e^{-|z|^{2}}\F_{\mu,\alpha,\beta}(k_{z},k_{z}),\quad z\in\Complexn.
\end{equation*}
The proof of the following result is literally the same as of Proposition 6.4 in \cite{Vasilevski-Rozenblum}, we only  need to replace Corollary 5.5 in \cite{Vasilevski-Rozenblum} by Proposition \ref{prop-Ck}.
\begin{proposition}\label{prop-boundedness-Fab}
	Let $\alpha,\beta\in\Entero_{+}^{n}$ and  $k\in (\mathbb{Z}_{+}/2)^{n}$ satisfying	 $\alpha+\beta=2k$. 	If	 $\mu\in\Borelpcn$ is a positive  $k$-FC type measure for $\Fock$ then the associated sesquilinear form $\F_{\mu,\alpha,\beta}$ given in \eqref{ses-F-al-bet}   is bounded.
\end{proposition}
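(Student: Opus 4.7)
My plan is to reduce boundedness of $\F_{\mu,\alpha,\beta}$ to two \emph{ordinary} Fock--Carleson estimates (one for derivatives of order $\alpha$, one for order $\beta$) applied to suitably reweighted versions of $\mu$. The bridge is the identity $\alpha+\beta=2k$, which lets us insert a weight and its reciprocal without changing the integrand.

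First, for $w=x+iy\in\Complexn$, I introduce the weight
\begin{equation*}
h(w)=\prod_{j=1}^{n}(1+x_{j}^{2})^{(k_{j}-\alpha_{j})/2}(1+y_{j}^{2})^{(k_{j}-\alpha_{j})/2},
\end{equation*}
so that, using $2k=\alpha+\beta$,
\begin{equation*}
h(w)^{-2}=\prod_{j=1}^{n}(1+x_{j}^{2})^{k_{j}-\beta_{j}}(1+y_{j}^{2})^{k_{j}-\beta_{j}}.
\end{equation*}
Writing $\partial^{\alpha}f\,\overline{\partial^{\beta}g}=(h\,\partial^{\alpha}f)\cdot(h^{-1}\overline{\partial^{\beta}g})$ and applying Cauchy--Schwarz for the positive measure $e^{-|w|^{2}}d\mu(w)$ in \eqref{ses-F-al-bet}, I obtain
\begin{equation*}
|\F_{\mu,\alpha,\beta}(f,g)|^{2}\leq \pi^{-2n}\Bigl(\int_{\Complexn}|\partial^{\alpha}f|^{2}e^{-|w|^{2}}d\mu_{k-\alpha}\Bigr)\Bigl(\int_{\Complexn}|\partial^{\beta}g|^{2}e^{-|w|^{2}}d\mu_{k-\beta}\Bigr),
\end{equation*}
where $\mu_{k-\alpha}$ and $\mu_{k-\beta}$ are the positive Borel measures defined by formula \eqref{measure-muk} with exponents $k-\alpha$ and $k-\beta$ (both half-integer multi-indices in general, but this only affects the weight, not the Borel measure structure).

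Second, I show that each factor is dominated by $\|f\|_{\Fock}^{2}$ and $\|g\|_{\Fock}^{2}$ respectively. Since $\alpha$ and $\beta$ are \emph{integer} multi-indices, Proposition \ref{prop-crmur} characterizes $\alpha$-FC and $\beta$-FC measures through finiteness of $C_{\alpha}(\mu_{k-\alpha},\rb)$ and $C_{\beta}(\mu_{k-\beta},\rb)$. A direct computation with \eqref{measure-muk} shows the compatibility $(\mu_{k-\alpha})_{\alpha}=\mu_{k}=(\mu_{k-\beta})_{\beta}$, so that both quantities reduce, up to combinatorial factors, to $\sup_{z\in\Complexn}\mu_{k}(B_{\rb}(z))$. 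The assumption that $\mu$ is a $k$-FC type measure is precisely the finiteness of this supremum (extended version of the definition). Hence $\mu_{k-\alpha}$ is $\alpha$-FC and $\mu_{k-\beta}$ is $\beta$-FC, which gives
\begin{equation*}
\int_{\Complexn}|\partial^{\alpha}f|^{2}e^{-|w|^{2}}d\mu_{k-\alpha}\leq C_{1}\|f\|_{\Fock}^{2},\qquad \int_{\Complexn}|\partial^{\beta}g|^{2}e^{-|w|^{2}}d\mu_{k-\beta}\leq C_{2}\|g\|_{\Fock}^{2}.
\end{equation*}
Combining the two estimates yields $|\F_{\mu,\alpha,\beta}(f,g)|\leq C\|f\|_{\Fock}\|g\|_{\Fock}$, proving boundedness.

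The only genuinely non-routine step is the compatibility argument $(\mu_{k-\alpha})_{\alpha}=\mu_{k}$ combined with extending Proposition \ref{prop-Ck} to the case where $k-\alpha$ has components of arbitrary sign (so that $\mu_{k-\alpha}$ may be ``smaller'' than $\mu$). This is where the precise normalization in \eqref{measure-muk} matters, and it is what makes the relation $\alpha+\beta=2k$ indispensable: without it, the weight $h$ and its reciprocal would not simultaneously generate admissible Borel measures covered by the $k$-FC hypothesis. Once this bookkeeping is in place, the argument is essentially the one in \cite[Proposition 6.4]{Vasilevski-Rozenblum}, as announced in the statement.
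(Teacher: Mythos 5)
Your argument is correct and is essentially the paper's own proof: the paper simply refers to \cite[Proposition 6.4]{Vasilevski-Rozenblum} with Corollary 5.5 there replaced by Proposition \ref{prop-Ck}, and that proof is exactly your Cauchy--Schwarz splitting with the reciprocal weights turning $\mu$ into $\mu_{k-\alpha}$ and $\mu_{k-\beta}$, followed by the observation that these are $\alpha$-FC and $\beta$-FC respectively because $(\mu_{k-\alpha})_{\alpha}=(\mu_{k-\beta})_{\beta}=\mu_{k}$. You have also correctly identified the one piece of bookkeeping the paper glosses over, namely that $k-\alpha$ may have negative or half-integer components, which is handled by the extended definition of $k$-FC measures via $C_{k}(\mu)$.
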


\begin{remark}
	Let  $\mu$ be a  $k$-FC measure for $\Fock$ and let  $\mu_R$ be  the measure obtained from $\mu$ by restricting to the exterior of the ball of the radius $R$ and centered in $0$. Observe that,  by Proposition  \ref{prop-boundedness-Fab}, the Toeplitz operator $\bT_{\ppabeta\mu}$ is bounded, in addition,  if the constant $C_k(\mu_R) \to 0$,
as $R\to\infty$,   then $\bT_{\ppabeta\mu}$ is compact.
\end{remark}
\noindent
If $\mu\in\Borelpcn$ is a positive  $k$-FC type measure for $\Fock$, by Proposition \ref{prop-boundedness-Fab}, there exists a unique bounded operator $\mathbf{A}_{\alpha,\beta}$ such that  $\F_{\mu,\alpha,\beta}(f,g)=\innerp{\mathbf{A}_{\alpha,\beta}f}{g}$ for every $f,g\in\Fock$. The Toeplitz operator $\bT_{\ppabeta\mu}$ is bounded since $\F_{\mu,\alpha,\beta}$ is bounded and  $\mathbf{A}_{\alpha,\beta}=\bT_{\ppabeta\mu}$. In effect, 	for each $g\in\Fock$, by the reproducing property and the derivation under integral sign,  for all $z\in\Complexn$:
\begin{equation}\label{aux-deriv}
\partial^{ \alpha}\hspn g(z)=\pi^{-n}\int_{\Complexn}g(w)\overline{w}^{\alpha}e^{z\cdot\overline{w}}e^{-|w|^{2}}d\nu_{2n}(w).
\end{equation}
Thus, by Fubini's theorem, \eqref{ses-F-al-bet},  and  \eqref{aux-deriv},  for every $f,g\in\Fock$,
\begin{align}\label{Fabeta=Tpamu}
\innerp{\bT_{\ppabeta\mu}f}{g}
&=\pi^{-n}\int_{\Complexn}\left(\pi^{-n}z^{\beta}\int_{\Complexn}\partial^{\alpha}\hspn f(w)e^{z\cdot\overline{w}}e^{-|w|^{2}}d\mu(w)\right)\overline{g(z)}e^{-|z|^{2}}d\nu_{2n}(z)\nonumber\\
&=\pi^{-n}\int_{\Complexn}\partial^{\alpha}\hspn f(w)\overline{\left(\pi^{-n}\int_{\Complexn}\overline{z}^{\beta}e^{w\cdot\overline{z}}g(z)e^{-|z|^{2}}d\nu_{2n}(z)\right)}e^{-|w|^{2}}d\mu(w)\nonumber\\
&=\pi^{-n}\int_{\Complexn}\partial^{\alpha}\hspn f(w)\overline{\partial^{\beta}\hspn g(w)}e^{-|w|^{2}}d\mu(w)=\F_{\mu,\alpha,\beta}(f,g).
\end{align}

\section{Horizontal Toeplitz operators with  FC type measures as symbols}\label{Section-Hor-FC}
Given a complex regular Borel  measure $\varrho\in\Borelprn$ we denote by $\mu=\varrho\otimes\eta$ the tensor product of the measures $\varrho$ and    $\eta$ on $\Realn$.  i.e.,   for any $A, B\in\Borelrn$, $\mu(A\times B)=\varrho(A)\eta(B),$ with the usual extension to all Borel sets in $\mathbb{R}^{2n}$
\begin{definition}[\textbf{Horizontal measures}]\label{hor-meas}
 We say that $\mu\in\Borelpcn$ is \emph{horizontal} if    $\mu=\varrho\otimes\nu_{n}$ for some $\varrho\in\Borelprn$, where $\nu_{n}$ is the  Lebesgue measure on $\Realn$.
Furthermore, if  $\mu=\varrho\otimes\nu_{n}$ is an FC type measure for $\Fock$ we say that $\mu$ is an \emph{hFC}.
\end{definition}
\noindent
The following lemma is  analogous to the injectivity property of the  Berezin transform for Toeplitz operators $\bT_{\mu}$ with FC type measures as symbols. Although for an immediate use it is sufficient to prove it for $k=0$, we admit any  $k\in(\Entero_{+}/2)^{n}$ since it will be required further on in Theorem \ref{thm-char-2kfc}.
\begin{lemma}\label{lemma-medida-zkzero}
	Let  $k\in(\Entero_{+}/2)^{n}$ and $\mu\in\Borelpcn$ be a complex  measure that satisfies the $(M)$-condition \eqref{cond-M}. If
	$$(\RE \,z)^{2k}\hspace{-4pt}\int_{\Complexn}e^{-|z-w|^{2}}d\mu(w)=0\quad \text{or}\quad (\IM \, z)^{2k}\hspace{-4pt}\int_{\Complexn}e^{-|z-w|^{2}}d\mu(w)=0$$ for any  $z$ in $\Complexn$ then $\mu$ is the zero measure.
\end{lemma}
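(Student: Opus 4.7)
Proof plan. The target is to exploit the real-analyticity of the Gaussian convolution of $\mu$ in order to upgrade vanishing on an open dense set to vanishing everywhere, and then to conclude $\mu=0$ via the injectivity of the Fourier transform of a finite complex measure. Set
$$F(z):=\int_{\Complexn}e^{-|z-w|^{2}}d\mu(w),\qquad z=x+iy\in\Complexn,$$
and let $d\nu(w):=e^{-|w|^{2}}d\mu(w)$. Using $|z-w|^{2}=|x-u|^{2}+|y-v|^{2}$ and expanding the cross terms, one gets the factorization
$$F(x,y)=e^{-|x|^{2}-|y|^{2}}H(x,y),\qquad H(x,y):=\int_{\Complexn}e^{2(x\cdot u+y\cdot v)}d\nu(u+iv).$$
Taking $z=0$ in the $(M)$-condition \eqref{cond-M} yields $|\nu|(\Complexn)=\int e^{-|w|^{2}}d|\mu|(w)<\infty$, so $\nu$ is a finite complex Borel measure; the full condition \eqref{cond-M}, written out, is exactly $\sup_{z}\int e^{2(x\cdot u+y\cdot v)}d|\nu|(w)<\infty$, which ensures that the defining integral for $H$ converges absolutely for every real $(x,y)$.

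The second step is to observe that, since absolute convergence of $H$ at every real $(x,y)$ controls absolute convergence at every complex argument (the absolute value of the integrand depends only on real parts), $H$ extends to an entire function on $\Complex^{2n}$ by differentiation under the integral sign; in particular $F$ is real-analytic on $\Complexn\simeq\Real^{2n}$. The hypothesis $(\RE z)^{2k}F(z)=0$ forces $F$ to vanish on the open dense set
$$\Omega:=\{z\in\Complexn:\RE z_{j}\neq 0\text{ whenever }k_{j}\neq 0\}$$
(and on all of $\Complexn$ if $k=0$). By continuity, $F\equiv 0$ on $\Complexn$, and since $e^{-|x|^{2}-|y|^{2}}$ is nowhere zero, $H\equiv 0$ on $\Real^{2n}$. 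By analytic continuation, $H\equiv 0$ on $\Complex^{2n}$.

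In the third step, evaluating the extended $H$ at purely imaginary arguments gives the Fourier transform of $\nu$:
$$\widehat{\nu}(\xi,\eta)=\int_{\Complexn}e^{i(\xi\cdot u+\eta\cdot v)}d\nu(u+iv)=H\!\left(\tfrac{i\xi}{2},\tfrac{i\eta}{2}\right)=0,\qquad (\xi,\eta)\in\Real^{2n}.$$
The injectivity of the Fourier transform on finite complex Borel measures then yields $\nu=0$ and therefore $\mu=0$. The case with $(\IM z)^{2k}$ is completely analogous; alternatively, one can reduce it to the previous case by the substitution $z\mapsto iz$ and $w\mapsto iw$, which preserves both the Gaussian kernel and the $(M)$-condition while exchanging real and imaginary parts.

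The main obstacle is the justification of the holomorphic extension of $H$ and the accompanying passage of derivatives under the integral sign; this is bookkeeping, but it is precisely at this point that one must use the full strength of the $(M)$-condition (not merely its consequence $|\nu|(\Complexn)<\infty$), since we need integrability of $e^{2(x\cdot u+y\cdot v)}$ against $|\nu|$ at every real $(x,y)$, not only in a neighborhood of the origin.
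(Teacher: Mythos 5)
Your proof is correct, and its skeleton is the same as the paper's: holomorphically extend the Gaussian transform of $e^{-|w|^{2}}d\mu$, kill it by an identity-theorem argument, and finish with the injectivity of the Fourier--Stieltjes transform of the finite measure $d\nu=e^{-|w|^{2}}d\mu$. The differences are in the execution, and they are worth noting. The paper keeps the polynomial factor attached: it forms $\Psi(z,w)=(w+z)^{2k}\int e^{w\cdot\zeta}e^{z\cdot\overline{\zeta}}e^{-|\zeta|^{2}}d\mu(\zeta)$, proves joint analyticity on $\Complexn\times\overline{\Complexn}$ via Morera plus Hartogs, invokes the lemma that a function holomorphic in $(z,\bar w)$ vanishing on the anti-diagonal $w=\bar z$ vanishes identically (\cite[Proposition 1.69]{Folland2}), and only afterwards divides off $(w+z)^{2k}$ on the complement of the hypersurface $\{z=-w\}$ and uses continuity. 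You instead divide off the polynomial at the very start, on the real points, using that $\{(\RE z)^{2k}\neq 0\}$ is open and dense in $\Real^{2n}$ and that $F$ is continuous; you then need only the more elementary statement that an entire function on $\Complex^{2n}$ vanishing on $\Real^{2n}$ is identically zero, rather than Folland's diagonal lemma. Your factorization $F=e^{-|x|^{2}-|y|^{2}}H(x,y)$ also makes the final identification $\widehat{\nu}(\xi,\eta)=H(i\xi/2,i\eta/2)$ transparent, whereas the paper has to introduce the auxiliary function $\Phi$ and evaluate it at $(y+ix,-y+ix)$. The one place where you should be slightly more careful than you are is the claim that $H$ is entire ``by differentiation under the integral sign'': to dominate locally you should note that for $\lvert\RE x_j-\RE x_j^{0}\rvert\le r$ one has $e^{2\RE x\cdot u}\le\sum_{\epsilon\in\{\pm1\}^{n}}e^{2(\RE x^{0}+r\epsilon)\cdot u}$, each summand being $|\nu|$-integrable by condition $(M)$; alternatively, Morera plus Fubini (as in the paper) avoids the issue. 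This is indeed bookkeeping, as you say, and your reading that the full strength of \eqref{cond-M} (integrability at every real point, not just near the origin) is what makes the extension possible is exactly right.
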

\begin{proof}
	Suppose that $\displaystyle (\RE \,z)^{2k}\hspace{-4pt}\int_{\Complexn}e^{-|z-w|^{2}}d\mu(w)=0$ for all  $z$ in $\Complexn$. 	 Let $\Psi\colon \Complexn\times\overline{\Complexn}\rightarrow\Complex$ be the mapping
	\begin{align*}
	\Psi(z,w)&=\sum_{\beta\leq 2k}\left(\begin{matrix}
	2k\\ \beta
	\end{matrix}\right)\int_{\Complexn}w^{2k-\beta}e^{w\cdot \zeta}z^{\beta}e^{z\cdot\overline{\zeta}}e^{-|\zeta|^{2}}d\mu(\zeta)\\
	&=(w+z)^{2k}\hspace{-4pt}\int_{\Complexn}e^{w\cdot \zeta}e^{z\cdot\overline{\zeta}}e^{-|\zeta|^{2}}d\mu(\zeta).
	\end{align*}
	Note that for any triangle $\Delta$ in $\Complexn$ by the Fubini's Theorem,
	\begin{align*}
	\int_{\partial\Delta}\Psi(z,w)dz&=\sum_{\beta\leq 2k}\left(\begin{matrix}
	2k\\ \beta
	 \end{matrix}\right)\int_{\Complexn}w^{2k-\beta}e^{w\cdot\zeta}\left(\int_{\partial\Delta}z^{\beta}e^{z\cdot\overline{\zeta}}dz\right)d\mu(\zeta)=0,\\
	\int_{\partial\Delta}\Psi(z,w)dw&=\sum_{\beta\leq 2k}\left(\begin{matrix}
	2k\\ \beta
	 \end{matrix}\right)\int_{\Complexn}z^{\beta}e^{z\cdot\overline{\zeta}}\left(\int_{\partial\Delta}w^{2k-\beta}e^{w\cdot \zeta}dw\right)d\mu(\zeta)=0.
	\end{align*}
	Thus, by  Hartogs's Theorem, $\Psi$ is an analytic function in $\Complexn\times\overline{\Complexn}$  and $$\Psi(z,\overline{z})=e^{|z|^{2}}2^{|2k|}(\RE z)^{2k}\int_{\Complexn}e^{-|\zeta-z|^{2}}d\mu(\zeta)=0.$$
	Then, by \cite[Proposition 1.69]{Folland2}, $\Psi\equiv0$. Therefore, the function $\Phi\colon \Complexn\times\overline{\Complexn}\rightarrow\Complex$,
	 $$\Phi(z,w)=\int_{\Complex}e^{z\cdot\zeta}e^{w\cdot\zeta}e^{-|\zeta|^{2}}d\mu(\zeta),$$
	satisfies  $\Phi(z,w)=0$ for all $z\neq-w.$ However, by the same argument applied to $\Psi,$ the function $\Phi$ is analytic in $\Complexn\times\overline{\Complexn}$ and hence continuous. Thus, $\Phi\equiv0$ in $\Complexn\times\overline{\Complexn}$.
	
	In particular, if $d\varsigma(u,v)=e^{-|u+iv|^{2}}d\mu(u+iv)$ then  for any $x,y\in\Realn$,
	\begin{align*}
	 \int_{\Real^{n}\times\Realn}e^{-i(x,y)\cdot(u,v)}d\varsigma(u,v)&=\int_{\Real^{n}\times\Realn}e^{(-y+ix)\cdot(u+iv)+(y+ix)\cdot(u-iv)}d\varsigma(u,v)\\
	&=\int_{\Complex^{n}} e^{(-y+ix)\cdot\zeta+(y+ix)\cdot\overline{\zeta}}e^{-|\zeta|^{2}}d\mu(\zeta)=\Phi(y+ix,-y+ix)=0,
	\end{align*}
	i.e., the Fourier-Stieltjes transformation of the bounded complex  measure $\varsigma$ in $\Realn\times\Realn$ is 0. Thus by the injectivity of Fourier-Stieltjes transform,  $\varsigma\equiv0$, see \cite[Proposition 3.8.6]{Bogachev}, and hence $\mu\equiv0$.	
	\noindent
	Now, if $\displaystyle (\IM \,z)^{2k}\hspace{-4pt}\int_{\Complexn}e^{-|z-w|^{2}}d\mu(w)=0$  for any  $z$ in $\Complexn$, then the proof of the statement follows almost literally the above reasoning, we only need to replace $\Psi $ by the function
	\begin{align*}
	\psi(z,w)&=\sum_{\beta\leq 2k}\left(\begin{matrix}
	2k\\ \beta
	\end{matrix}\right)(-1)^{\beta}\int_{\Complexn}w^{2k-\beta}e^{w\cdot \zeta}z^{\beta}e^{z\cdot\overline{\zeta}}e^{-|\zeta|^{2}}d\mu(\zeta)\\
	&=(w-z)^{2k}\hspace{-4pt}\int_{\Complexn}e^{w\cdot \zeta}e^{z\cdot\overline{\zeta}}e^{-|\zeta|^{2}}d\mu(\zeta).\qedhere
	\end{align*}
\end{proof}
\begin{lemma}\label{lemma-sep-int-ber}
	Let $\varrho$ be a complex regular measure on $\mathbb{R}^n$ such that $\mu=\varrho\otimes\nu_{n}\in\Borelpcnm$. Then for every $z=x+iy\in\Complexn$,
	\begin{equation}\label{berezin-convo}
	 \int_{\Complexn}e^{-|z-w|^{2}}d\mu(w)=\pi^{-n/2}\int_{\Realn}e^{-(t-x)^{2}}\,d\varrho(t).
	\end{equation}
\end{lemma}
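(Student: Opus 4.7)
The plan is to exploit the product structure $\mu=\varrho\otimes\nu_{n}$ together with the observation that, writing $w=u+iv$ with $u,v\in\mathbb{R}^{n}$, the Gaussian exponent splits as
\begin{equation*}
|z-w|^{2}=|x-u|^{2}+|y-v|^{2},
\end{equation*}
so the integrand factors into a piece depending only on $u$ and a piece depending only on $v$. Concretely, $e^{-|z-w|^{2}}=e^{-(x-u)^{2}}\cdot e^{-(y-v)^{2}}$.

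From there I would apply Fubini's theorem to rewrite the integral over $\mathbb{C}^{n}$ as an iterated integral against $d\varrho(u)\,d\nu_{n}(v)$, and separate it into the product
\begin{equation*}
\left(\int_{\mathbb{R}^{n}}e^{-(x-u)^{2}}\,d\varrho(u)\right)\cdot\left(\int_{\mathbb{R}^{n}}e^{-(y-v)^{2}}\,d\nu_{n}(v)\right).
\end{equation*}
The $v$-factor is the classical $n$-dimensional Gaussian integral, which equals $\pi^{n/2}$ independently of $y$, and the $u$-factor is exactly $\int_{\mathbb{R}^{n}}e^{-(t-x)^{2}}\,d\varrho(t)$ after renaming $u$ to $t$. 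Combining these yields the asserted identity (up to the stated constant factor of $\pi$).

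The only subtle point is the justification of Fubini, since $\varrho$ is only a complex regular measure. This is where the $(M)$-condition on $\mu$ enters: using $|K_{z}(w)|^{2}e^{-|w|^{2}}=e^{|z|^{2}}e^{-|z-w|^{2}}$, condition \eqref{cond-M} for $|\mu|$ translates into
\begin{equation*}
\int_{\mathbb{C}^{n}}e^{-|z-w|^{2}}\,d|\mu|(w)<\infty
\end{equation*}
for every $z\in\mathbb{C}^{n}$. Hence the joint integrand is absolutely integrable against $|\varrho|\otimes\nu_{n}$, and Fubini applies without difficulty. I do not expect any genuine obstacle here; the lemma is essentially a bookkeeping consequence of the product structure and the Gaussian integral.
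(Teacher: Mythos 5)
Your proof is correct and is essentially identical to the paper's: both factor $e^{-|z-w|^{2}}=e^{-(x-u)^{2}}e^{-(y-v)^{2}}$, apply Fubini to the product measure $\varrho\otimes\nu_{n}$, and evaluate the Gaussian $v$-integral as $\pi^{n/2}$ (your extra remark justifying Fubini via the $(M)$-condition is a point the paper leaves implicit). The constant mismatch you flag is real: your computation gives $\pi^{n/2}$ on the right-hand side, and the paper's own proof only reaches the stated $\pi^{-n/2}$ by silently inserting a factor $\pi^{-n}$ in its first line, consistent with the Berezin-transform normalization \eqref{berezin-mu} under which the lemma is subsequently applied.
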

\begin{proof}
	Let $z=x+iy\in\Complexn$. Then
	\begin{align*}
	 \int_{\Complexn}e^{-|z-w|^{2}}d\mu(w)&=\pi^{-n}\int_{\Complexn}e^{-|x+iy-w|^{2}}d\mu(w)=\pi^{-n}\int_{\Realn}\int_{\Realn}e^{-|(x-t)+i(y-v)|^{2}}d\varrho(t)d\nu_{n}(v)\\
	 &=\pi^{-n}\int_{\Realn}\int_{\Realn}e^{-(x-t)^{2}-(y-v)^{2}}d\varrho(t)d\nu_{n}(v)\\
	 &=\pi^{-n}\left(\int_{\Realn}e^{-(y-v)^{2}}d\nu_{n}(v)\right)\left(\int_{\Realn}e^{-(x-t)^{2}}d\varrho(t)\right)\\
	&=\pi^{-n/2}\int_{\Realn}e^{-(x-t)^{2}}d\varrho(t).\qedhere
	\end{align*}
\end{proof}
	\begin{theorem}[\textbf{Criterion for hFC measures}]\label{thm-char-hfc}
	Let $\mu$ be a positive FC measure for $\Fock$. Then the following conditions are equivalent:
	\begin{enumerate}
		\item $\bT_{\mu}$ is horizontal.
		\item  $\widetilde{\mu}$ depends only on $\RE \, z$.
		\item  $\mu$ is invariant under horizontal translations, i.e., for every Borel set $X\subset \Complexn$ and every $h\in\Realn$, $$\mu(X+ih)=\mu(X).$$
		\item For every Borel sets $Y,Z\subset\Realn$, and every $h\in\Realn$,
		$$\mu(Y\times (Z+h))=\mu(Y\times Z)$$
		\item $\mu$ is a horizontal measure, i.e., there exists  $\varrho\in\Borelprn$  such that $\mu=\varrho\otimes\nu_{n}.$
	\end{enumerate}
\end{theorem}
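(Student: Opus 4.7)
The plan is to establish the cycle $1 \Leftrightarrow 2$ together with $2 \Rightarrow 3 \Rightarrow 4 \Rightarrow 5 \Rightarrow 2$. The equivalence $1 \Leftrightarrow 2$ follows immediately from Proposition \ref{criterio-horizontal-operators}, since for a bounded $\bT_\mu$ one has $\widetilde{\bT_\mu}(z) = \widetilde{\mu}(z)$, so that $\bT_\mu$ is horizontal exactly when $\widetilde{\mu}$ is a horizontal function. The implication $5 \Rightarrow 2$ is essentially the content of Lemma \ref{lemma-sep-int-ber}, which expresses $\widetilde{\mu}$ as an explicit function of $\RE z$ alone. The step $3 \Rightarrow 4$ is the trivial specialization of the horizontal invariance to rectangular sets $X = Y \times Z$, using that $(Y\times Z) + ih = Y \times (Z+h)$.

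For $2 \Rightarrow 3$, the key tool is Lemma \ref{lemma-medida-zkzero}. Fix $h \in \Realn$ and let $\mu_h$ denote the translated measure $\mu_h(X) = \mu(X + ih)$. Changing variables in the defining integral \eqref{berezin-mu} gives
\begin{equation*}
\widetilde{\mu_h}(z) \;=\; \pi^{-n} \int_{\Complexn} e^{-|(z - ih) - w|^{2}} d\mu(w) \;=\; \widetilde{\mu}(z - ih).
\end{equation*}
Since $\widetilde{\mu}$ depends only on $\RE z$ and $\RE(z - ih) = \RE z$, we conclude $\widetilde{\mu_h} \equiv \widetilde{\mu}$, so the signed Borel measure $\mu_h - \mu$ has identically vanishing Berezin transform. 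The difference $\mu_h - \mu$ still satisfies condition $(M)$, since translation in the imaginary direction only multiplies $|K_z(w)|^{2}$ by a factor bounded in $w$ for each fixed $z$. Lemma \ref{lemma-medida-zkzero} with $k=0$ then forces $\mu_h = \mu$, which is precisely condition 3.

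The remaining step $4 \Rightarrow 5$ is where the main technical work lies. For each bounded Borel $Y \subset \Realn$, the slice $\nu_Y(Z) = \mu(Y \times Z)$ defines a locally finite Borel measure on $\Realn$ (locally finite because $\mu$ is Fock--Carleson and hence uniformly bounded on polydisks by Proposition \ref{criterion-borel-measure}), and by condition 4 it is translation invariant. Uniqueness of Haar measure on $\Realn$ therefore gives $\nu_Y = \varrho(Y)\,\nu_n$ for a nonnegative constant $\varrho(Y)$. Fixing any $Z_{0}$ with $0 < \nu_n(Z_0) < \infty$ yields the explicit formula $\varrho(Y) = \mu(Y\times Z_0)/\nu_n(Z_0)$, from which $\sigma$-additivity and regularity of $\varrho$ on bounded Borel sets follow from the corresponding properties of $\mu$ by approximating $Y\times Z_0$ from inside by compacts and from outside by open sets. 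Extending $\varrho$ to all Borel sets, the measures $\mu$ and $\varrho \otimes \nu_n$ agree on all rectangles and hence on $\Borelcn$, completing condition 5. The main obstacle I anticipate is precisely this measure-theoretic bookkeeping for the regularity and $\sigma$-additivity of $\varrho$; the analytic heart of the argument is the injectivity step $2 \Rightarrow 3$, which is already packaged in Lemma \ref{lemma-medida-zkzero}.
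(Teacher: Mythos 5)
Your proposal is correct and follows essentially the same route as the paper: $1\Leftrightarrow 2$ via Proposition \ref{criterio-horizontal-operators}, $2\Rightarrow 3$ by showing the translated measure has the same Berezin transform and invoking Lemma \ref{lemma-medida-zkzero} with $k=0$, $4\Rightarrow 5$ via uniqueness of Haar measure on the slices, and the return to $2$ (the paper goes to $1$) via Lemma \ref{lemma-sep-int-ber}. The only cosmetic differences are an immaterial sign in $\widetilde{\mu_h}(z)=\widetilde{\mu}(z\pm ih)$ and that you verify condition $(M)$ for $\mu_h-\mu$ where the paper additionally checks the FC property via the Weyl operators; since Lemma \ref{lemma-medida-zkzero} only needs $(M)$, your version suffices.
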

\begin{proof}
	$1\Leftrightarrow2$.  If the measure $\mu$ is a FC for $\Fock$ then the Toeplitz operator $\bT_{\mu}$ is bounded. Therefore, by Proposition \ref{criterio-horizontal-operators}, this operator is horizontal if and only if the corresponding Berezin transform depends only on $\RE \,z$.\\
	\smallskip
	\noindent
	$2\Rightarrow3$.  Let $h\in\Realn$,  $X\in\Borelcn$ and set $\mu_{h}(X)=\mu(X+ih)$. For every $z\in\Complexn$,
	\begin{equation*}
	\widetilde{ \mu_{h}}(z)=\int_{\Complexn}\hspace{-7pt}e^{-|z-w|^{2}}d\mu(w+ih)=\int_{\Complexn}\hspace{-7pt}e^{-|(z+ih)-w|^{2}}d\mu(w)
	=	\widetilde{\mu_{h}}(z+ih),\quad
	\end{equation*}
	for any $f\in\Fock$.Therefore,  for every $z\in\Complexn$,	
\begin{align*}\label{aux-lambdah}
	\widetilde{\lambda_{h}}(z)&=\int_{\Complexn}e^{-|z-w|^{2}}d\lambda_{h}(w)=0,
	\end{align*}
	where  $\lambda_{h}=\mu_{h}-\mu$. 	This is  a signed measure such that $|\lambda_{h}|(B)\leq |\mu_{h}|(B)+|\mu|(B)$ for all $B\in\Borelcn$,  $|\lambda_{h}|$ satisfies the $(M)$-condition, and for every $f\in\Fock$
	\begin{align*}
	\int_{\Complex^{n}}|f(z)|^{2}e^{-|z|^{2}}d|\lambda_{h}|(z)&\leq \int_{\Complex^{n}}|f(z)|^{2}e^{-|z|^{2}}d|\mu_{h}|(z)+\int_{\Complex^{n}}|f(z)|^{2}e^{-|z|^{2}}d|\mu|(z)\\
	 &=\int_{\Complex^{n}}|f(z-ih)|^{2}e^{-|z-ih|^{2}}d|\mu|(z)+\omega(\mu)\|f\|^{2}\\
	&=\int_{\Complex^{n}}|e^{ih\cdot z+\frac{h^{2}}{2}}\W_{ih}f(z)|^{2}e^{-|z-ih|^{2}}d|\mu|(z)+\omega(\mu)\|f\|^{2}\\
	&=\int_{\Complex^{n}}|e^{ih\cdot z}|^{2}e^{h^{2}}|\W_{ih}f(z)|^{2}e^{-(|z|^{2}+z\cdot ih-ih\cdot\overline{z}+h^{2})}d|\mu|(z)+\omega(\mu)\|f\|^{2}\\
	&=\omega(\mu)\|\W_{-ih}f\|^{2}+\omega(\mu)\|f\|^{2}.
	\end{align*}
	Since $\W_{-ih}$ is unitary on the Fock space, this shows that $\lambda_{h}$ is a  FC  measure for $\Fock$. Now, by Lemma  \ref{lemma-medida-zkzero}, with $|k|=0$, we see that $\lambda_{h}$ is the zero measure, i.e., for every Borel set  $X\subset\Complexn$, $\mu(X+ih)=\mu(X)$   for every $h\in\Realn$.
	
	\noindent
	$3\Rightarrow4$.  It is immediate.\\
	$4\Rightarrow5$.  Suppose that  $\mu(Y\times (Z+h))=\mu(Y\times Z)$ for any Borel sets $Y,Z\in\Borelrn$ and every $h\in\Realn$.  We define the mapping  $\Psi_{Y}\colon \Borelrn\to[0,+\infty]$  by  $\Psi_{Y}(Z)=\mu(Y\times Z)$. It is easily seen that $\Psi_{Y}$ is a   measure on $\Realn$ such that $\Psi_{Y}(C)<+\infty$ for any compact subset $C$ in $\Realn$ since $\mu$ is regular. Hence $\Psi_{Y}$ is regular by \cite[Theorem 2.18]{Rudin}. Furthermore, $\Psi_{Y}$ is  translation invariant.  Consequently,  $\Psi_{Y}$ is a  multiple of the Lebesgue measure $\nu_{n}$, \cite[2.20 Theorem (d)]{Rudin}, i.e., there exists a number $\varrho(Y)$ such that $\mu(Y\times Z)=\varrho(Y)\nu_{n}(Z)$ for every $Z\in\Borelrn$.  Now,  $\varrho$ is a positive regular Borel measure on $\Realn$  since $\mu(Y\times [0,1]^{n})=\varrho(Y)$ for each set $Y\in\Borelrn$.  From this, it follows  that  $\mu$ is horizontal.\\
	$5\Rightarrow1$. 	If $\mu=\varrho\otimes\nu_{n}$ is a hFC type measure for $\Fock$, then by   \eqref{berezin-mu}  and Lemma \ref{lemma-sep-int-ber},   the Berezin transform  $\widetilde{\mu}$ of the Toeplitz operator $\bT_{\mu}$ is
	\begin{equation}\label{Berezin-h-mu}
	\widetilde{\mu}(z)=\pi^{-n/2}\int_{\Realn}e^{-(x-u)^{2}}d\varrho(u).
	\end{equation}
	Therefore, by Proposition \ref{criterio-horizontal-operators} the Toeplitz operator  $\bT_{\mu}$ is horizontal.
\end{proof}
\noindent
Since a complex regular Borel measure $\mu$ is  a FC	type measure  if and only if  its variation $|\mu|$ is a FC measure, it follows  that Theorem \ref{thm-char-hfc} remains valid for such type of measures. Next, we show that every Toeplitz operator with horizontal measure as symbol is unitarily equivalent to the multiplication operator by some  $L_{\infty}(\Realn)$-function.
\begin{theorem}[\textbf{Diagonalization of Toeplitz operators}]\label{diag-To-hFC}
	Let  $\mu= \varrho\otimes\nu_{n}$ be a hFC measure for $\Fock$. Then the Toeplitz operator $\bT_{\mu}$ is unitarily equivalent to \linebreak $\Bb \bT_{\mu}\Bb^{*}=\gamma_{\varrho}\mathrm{Id},$
	where the function $\gamma_{\varrho}\colon\Realn\rightarrow\Complex$ is given by the formula
	\begin{equation*}
	 \gamma_{\varrho}(x)=\left(\frac{2}{\pi}\right)^{n/2}\int_{\Realn}e^{-(x-\sqrt{2}y)^{2}}d\varrho(y),\quad x\in\Realn.
	\end{equation*}
\end{theorem}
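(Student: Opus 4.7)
The plan is to deduce the diagonalization directly from the horizontality criterion already established. Since $\mu = \varrho\otimes\nu_{n}$ is a horizontal FC measure, Theorem~\ref{thm-char-hfc} guarantees that $\bT_{\mu}$ is horizontal, and Proposition~\ref{criterio-horizontal-operators} then produces some $\gamma\in L_{\infty}(\Realn)$ such that $\Bb\bT_{\mu}\Bb^{*}=M_{\gamma}$. The substance of the proof is thus to identify $\gamma$ with the explicit function $\gamma_{\varrho}$ stated in the theorem.

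To pin $\gamma$ down I would match Berezin transforms. Using Example~\ref{example-rnucleo} together with $\|K_{z}\|_{\Fock}^{2}=e^{|z|^{2}}$, writing $z=a+ib$ with $a,b\in\Realn$, a short computation gives
\begin{equation*}
|(\Bb k_{z})(x)|^{2}=\pi^{-n/2}\,e^{-(x-\sqrt{2}\,a)^{2}}.
\end{equation*}
Combined with the unitarity of $\Bb$ and formula \eqref{Berezin-h-mu} for $\widetilde{\mu}$, the identity $\widetilde{\mu}(z)=\innerp{\bT_{\mu}k_{z}}{k_{z}}=\innerp{M_{\gamma}\Bb k_{z}}{\Bb k_{z}}$ yields the integral equation
\begin{equation*}
\int_{\Realn}\gamma(x)\,e^{-(x-\sqrt{2}\,a)^{2}}\,dx=\int_{\Realn}e^{-(a-u)^{2}}\,d\varrho(u),\qquad a\in\Realn.
\end{equation*}

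Next, I would verify that the candidate $\gamma_{\varrho}$ solves this equation. Plugging it in, applying Fubini's theorem, and completing the square in the inner Gaussian integral gives $\int_{\Realn}e^{-(x-\sqrt{2}\,y)^{2}-(x-\sqrt{2}\,a)^{2}}\,dx=(\pi/2)^{n/2}e^{-(a-y)^{2}}$, which reduces the left-hand side to $\pi^{-n/2}\int_{\Realn}e^{-(a-y)^{2}}\,d\varrho(y)$; the constant $(2/\pi)^{n/2}$ in the definition of $\gamma_{\varrho}$ is precisely what makes the two sides coincide. Uniqueness of the multiplier — which identifies $\gamma$ with $\gamma_{\varrho}$ almost everywhere — follows from the standard injectivity of the Weierstrass transform on $L_{\infty}(\Realn)$: integration against the Gaussian family $\{e^{-(x-\sqrt{2}\,a)^{2}}\}_{a\in\Realn}$ separates bounded measurable functions.

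The main obstacle is essentially bookkeeping — tracking the normalization constants accurately and justifying Fubini when exchanging the integrations against $d\varrho$ and Lebesgue measure, which is legitimate because $\varrho$ is locally finite (by the hFC hypothesis and Proposition~\ref{criterion-borel-measure}) while the Gaussians supply integrability at infinity. Conceptually, the entire argument collapses to a single two-variable Gaussian convolution, made possible by the horizontal structure of $\mu$, which fully disentangles the $\RE z$ and $\IM z$ dependences.
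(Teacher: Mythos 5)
Your proposal is correct and follows essentially the same route as the paper: both identify $\bT_{\mu}$ by matching Berezin transforms through the identical Gaussian convolution $\int_{\Realn}e^{-(x-\sqrt{2}y)^{2}}e^{-(x-\sqrt{2}a)^{2}}dx=(\pi/2)^{n/2}e^{-(a-y)^{2}}$ together with \eqref{Berezin-h-mu}. The only (inessential) difference is organizational — the paper directly computes the Berezin transform of the candidate $\Bb^{*}M_{\gamma_{\varrho}}\Bb$ and invokes injectivity of the Berezin transform on $\mathcal{B}(\Fock)$, whereas you first invoke the horizontality criterion to know the conjugated operator is some $M_{\gamma}$ and then pin down $\gamma$ via injectivity of the Weierstrass transform, which is the same uniqueness fact in multiplier form.
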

\begin{proof}
	Let $S=\Bb^{*}M_{\gamma_{\varrho}}\Bb$. Then, by \eqref{Bargmann-inversa-kernel}, \eqref{normalized-kernel} and Tonelli's theorem, a simple computation shows that for every $z=u+iv\in\Complex$
	\begin{align*}
	\widetilde{S}(u+iv)&=\innerp{\Bb^{*} M_{\gamma_{\varrho}}\Bb k_{u+iv}}{k_{u+iv}}=	 \frac{2^{n/2}}{\pi^{n}}\int_{\Realn}\left(\int_{\Realn}e^{-(x-\sqrt{2}y)^{2}}e^{-(\sqrt{2}u-x)^{2}}dx\right)d\varrho(y)\\
&	 =\left(\frac{1}{\pi^{n}}\int_{\Realn}e^{-(y-u)^{2}}d\varrho(y)\right)\left(\int_{\Realn}e^{-(t-u)^{2}}dt\right)
	=\pi^{n/2}\int_{\Realn}e^{-(y-u)^{2}}d\varrho(y).
	\end{align*}
	By \eqref{Berezin-h-mu}, the last integral is the Berezin transform   $\widetilde{ \mu}$ of  $\bT_{\mu}$,  where  $\mu=\varrho\otimes\nu_{n}$. Therefore, by the injectivity of the Berezin transform,  $S=\bT_{\mu}$, with $\mu= \varrho\otimes \nu_{n}$.
\end{proof}
\noindent
As a consequence of  Theorems \ref{thm-char-hfc} and \ref{diag-To-hFC} we have the following result.
\begin{corollary}\label{coro-Linf-FC}
	Let $\varrho\in\Borelprn$. Then  $\gamma_{\varrho}\in\,L_{\infty}(\Realn)$ if and only if $\varrho\otimes\nu_{n}$ is a FC  measure for $\Fock$.
\end{corollary}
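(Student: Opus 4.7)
The plan is to reduce both implications to the equivalence, from Proposition \ref{criterion-borel-measure}, that $\mu = \varrho\otimes\nu_{n}$ is an FC measure for $\Fock$ if and only if its Berezin transform $\widetilde{\mu}$ is bounded on $\Complexn$. Lemma \ref{lemma-sep-int-ber} already provides a closed-form expression for $\widetilde{\mu}$ that depends only on $\RE z$, which is what makes the comparison with $\gamma_{\varrho}$ tractable.

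For the ``only if'' direction, I would argue as follows. Assume $\varrho\otimes\nu_{n}$ is an FC measure. Then by Theorem \ref{thm-char-hfc} (the implication 5$\Rightarrow$1), it is in fact an hFC measure, so Theorem \ref{diag-To-hFC} applies and gives the diagonalization $\Bb\bT_{\mu}\Bb^{*} = \gamma_{\varrho}\,\mathrm{Id}$. Since $\mu$ being FC implies that $\bT_{\mu}$ is bounded on $\Fock$ (Proposition \ref{criterion-borel-measure}), and $\Bb$ is unitary, the multiplication operator $M_{\gamma_{\varrho}}$ must be bounded on $\Ele(\Realn)$, and therefore $\gamma_{\varrho}\in L_{\infty}(\Realn)$ with $\|\gamma_{\varrho}\|_{\infty}=\|\bT_{\mu}\|$.

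For the ``if'' direction, the cleanest route is to construct the candidate bounded operator directly and invoke the Berezin-transform identity that was established inside the proof of Theorem \ref{diag-To-hFC}. If $\gamma_{\varrho}\in L_{\infty}(\Realn)$, then $M_{\gamma_{\varrho}}$ is bounded on $\Ele(\Realn)$, and hence $S := \Bb^{*}M_{\gamma_{\varrho}}\Bb$ is a bounded operator on $\Fock$ (horizontal by Proposition \ref{criterio-horizontal-operators}). The Gaussian integration that appears in the proof of Theorem \ref{diag-To-hFC} is valid as soon as $\gamma_{\varrho}\in L_{\infty}$ and does not need $\mu$ to be FC a priori; it evaluates $\widetilde{S}(z) = \langle M_{\gamma_{\varrho}}\Bb k_{z},\Bb k_{z}\rangle$ to $\pi^{-n/2}\int_{\Realn}e^{-(x-t)^{2}}\,d\varrho(t)$, which by Lemma \ref{lemma-sep-int-ber} is exactly $\widetilde{\mu}(z)$. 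Since $S$ is bounded, $\widetilde{S}$ is bounded, and consequently $\widetilde{\mu}$ is bounded. Proposition \ref{criterion-borel-measure} then yields that $\mu = \varrho\otimes\nu_{n}$ is an FC measure.

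The main subtlety, rather than a real obstacle, is justifying that the computation $\widetilde{S}=\widetilde{\mu}$ really can be recycled from the proof of Theorem \ref{diag-To-hFC} without circularity: that proof assumed $\mu$ is hFC in order to identify $S$ with $\bT_{\mu}$ via injectivity of the Berezin transform, but the intermediate step computing $\widetilde{S}$ pointwise needs only $\gamma_{\varrho}\in L_{\infty}$ (so that Tonelli applies and the Gaussian integrals converge). Once this is flagged, both implications fall out cleanly from results already in place.
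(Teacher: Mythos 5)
Your proposal is correct and takes essentially the route the paper intends: the corollary is stated there without a written proof, merely as a consequence of Theorems \ref{thm-char-hfc} and \ref{diag-To-hFC}, and your two directions (the diagonalization $\Bb\bT_{\mu}\Bb^{*}=M_{\gamma_{\varrho}}$ for ``only if'', and recycling the Berezin-transform computation together with Proposition \ref{criterion-borel-measure} for ``if'') are exactly the natural expansion of that remark, including the correct observation that the Gaussian identity $\widetilde{S}=\widetilde{\mu}$ needs only $\gamma_{\varrho}\in L_{\infty}$ and not the FC hypothesis. The only point worth flagging is that your ``if'' direction uses the implication (3)$\Rightarrow$(5) of Proposition \ref{criterion-borel-measure}, which is stated for \emph{positive} measures, so positivity of $\varrho$ is tacitly assumed there --- but this is equally implicit in the paper's own formulation.
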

	
	\section{Horizontal Toeplitz operators  generated by  $k$-FC type measures}\label{Section-Hor-kFC}	
	
	Let $k\in(\Entero_{+}/2)^{n}$. If $\mu$ is a $k$-FC type measure for $\Fock$, then, by Proposition \ref{prop-boundedness-Fab},   the Toeplitz operator $\bT_{\ppabeta \mu} $ generated by the sesquilinear form $\F_{\mu,\alpha,\beta}$ is bounded for $\alpha,\beta\in\Entero_{+}^{n}$  such that $2k=\alpha+\beta$. Thus, the sesquilinear form
	\begin{equation}\label{ses-fom-Fmu2k}
	\Frc_{\mu,2k}=\sum_{\beta\leq 2k}\left(\begin{matrix}
	2k\\ \beta
	\end{matrix}\right)\F_{\mu,2k-\beta,\beta}
	\end{equation}
	is bounded as soon as	the Toeplitz operator  $\bT_{\Frc_{\mu,2k}}$ is, moreover, the action of the latter operator is given by
	\begin{equation*}
	\bT_{\Frc_{\mu,2k}}=\sum_{\beta\leq 2k}\left(\begin{matrix}
	2k\\ \beta
	\end{matrix}\right) \bT_{\ppartial^{2k-\beta}\overline{\ppartial}{}^{\beta}\mu}.
	\end{equation*}
	By \eqref{ses-fom-Fmu2k},  the action of the  Toeplitz operator  $\bT_{\Frc_{\mu,2k}}$ on $f\in\Fock$ has the following integral representation
	\begin{equation*}
	(\bT_{\Frc_{\mu,2k}}f)(z)=\pi^{-n}\sum_{\beta\leq 2k}\left(\begin{matrix}
	2k\\ \beta
	 \end{matrix}\right)z^{\beta}\int_{\Complexn}(\partial^{2k-\beta}f)(w)e^{z\cdot\overline{w}}e^{-|w|^{2}}d\mu(w),\quad z\in\Complexn.
	\end{equation*}
	Following \eqref{ses-F-al-bet}, we introduce the \emph{real coderivative} $\ppartial_{R}^{2k}\mu$ of $\mu$ by
	\begin{equation*}
	\ppartial_{R}^{2k}\mu=\sum_{\beta\leq 2k}\left(\begin{matrix}
	2k\\ \beta
	\end{matrix}\right) \ppartial^{2k-\beta}\overline{\ppartial}{}^{\beta}\mu=(\ppartial+\overline{\ppartial})^{2k}\mu.
	\end{equation*}
	Due to  \eqref{Fabeta=Tpamu},  $\bT_{\ppartial_{R}^{2k}\mu}:=\bT_{\Frc_{\mu,2k}}$ is the unique bounded operator such that
	$$\innerp{\bT_{\ppartial_{R}^{2k}\mu}f}{g}=\Frc_{\mu,2k}(f,g).$$
	 for any $f,g\in\Fock$. Now, by \eqref{berezin-a-b-mu},  the corresponding Berezin transform $\widetilde{\ppartial_{R}^{2k}\mu}$  equals
	\begin{align}\label{Bere-real-code-mu}
	\widetilde{\ppartial_{R}^{2k}\mu}(z)&=\pi^{-n}\sum_{\beta\leq 2k}\left(\begin{matrix}
	2k\\ \beta
	 \end{matrix}\right)z^{\beta}\overline{z}^{2k-\beta}\hspace{-4pt}\int_{\Complexn}\hspace{-4pt}e^{-|z-w|^{2}}d\mu(w)\nonumber\\
	&=2^{|2k|}\pi^{-n}(\RE z)^{2k}\hspace{-4pt}\int_{\Complexn}\hspace{-4pt}e^{-|z-w|^{2}}d\mu(w),\quad z\in\Complexn.
	\end{align}	
		\begin{theorem}[\textbf{Criterion for horizontal $k$-FC measures}]\label{thm-char-2kfc}
			Let  $k\in(\Entero_{+}/2)^{n}$  and $\mu$ be a positive $k$-FC measure for $\Fock$. Then the following conditions are equivalent:
			\begin{enumerate}
				\item $\bT_{\ppartial_{R}^{2k}\mu}$ is horizontal.
				\item  $\widetilde{\ppartial_{R}^{2k}\mu}$ depends only on $\RE \,z$.
				\item  $\mu$ is invariant under horizontal translations, i.e., for every Borel set $X\subset \Complexn$ and every $h\in\Realn$, $\mu(X+ih)=\mu(X).$
				\item For every Borel sets $Y,Z\subset\Realn$, and every $h\in\Realn\,$,
				$\mu(Y\times (Z+h))=\mu(Y\times Z).$
				\item $\mu$ is a horizontal measure, i.e., there exists  $\varrho\in\Borelprn$  such that $\mu=\varrho\otimes\nu_{n}.$
			\end{enumerate}
		\end{theorem}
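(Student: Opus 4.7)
The plan is to follow the proof of Theorem~\ref{thm-char-hfc} almost verbatim, using Lemma~\ref{lemma-medida-zkzero} in its full generality (which was stated for any $k\in(\Entero_{+}/2)^{n}$ precisely for this application) in place of its $k=0$ specialisation.

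For $1\Leftrightarrow2$, Proposition~\ref{prop-boundedness-Fab} ensures that $\bT_{\ppartial_{R}^{2k}\mu}$ is bounded, and formula~\eqref{Bere-real-code-mu} identifies its Berezin transform with $\widetilde{\ppartial_{R}^{2k}\mu}$; Proposition~\ref{criterio-horizontal-operators} then translates horizontality into dependence of the Berezin transform only on $\RE z$. For $5\Rightarrow1$, Lemma~\ref{lemma-sep-int-ber} shows that $\int_{\Complexn}e^{-|z-w|^{2}}d\mu(w)$ depends only on $\RE z$ when $\mu=\varrho\otimes\nu_{n}$; multiplying by $(\RE z)^{2k}$ preserves this dependence, so \eqref{Bere-real-code-mu} together with Proposition~\ref{criterio-horizontal-operators} concludes. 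The implication $3\Rightarrow4$ is immediate (a cylinder set $Y\times(Z+h)$ is a horizontal translate of $Y\times Z$). For $4\Rightarrow5$, I would copy the argument from the proof of Theorem~\ref{thm-char-hfc} verbatim: define $\Psi_{Y}(Z):=\mu(Y\times Z)$, verify that it is a translation-invariant regular Borel measure on $\Realn$ using the regularity of $\mu$, and invoke Rudin's classification theorem to obtain $\Psi_{Y}=\varrho(Y)\nu_{n}$, whence $\mu=\varrho\otimes\nu_{n}$.

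The substantive step is $2\Rightarrow3$. Setting $\mu_{h}(X):=\mu(X+ih)$ for $h\in\Realn$ and $\lambda_{h}:=\mu_{h}-\mu$, the change of variables $u=w+ih$ gives $\int e^{-|z-w|^{2}}d\mu_{h}(w)=\int e^{-|(z+ih)-u|^{2}}d\mu(u)$, and because $\RE(z+ih)=\RE z$, condition~$2$ yields
\begin{equation*}
(\RE z)^{2k}\int_{\Complexn}e^{-|z-w|^{2}}\,d\lambda_{h}(w)=0, \qquad z\in\Complexn.
\end{equation*}
Lemma~\ref{lemma-medida-zkzero} then forces $\lambda_{h}\equiv 0$, i.e.\ $\mu(X+ih)=\mu(X)$ for all Borel $X$ and every $h\in\Realn$, which is exactly condition~$3$.

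The main obstacle is verifying the $(M)$-condition for $|\lambda_{h}|$ required to invoke Lemma~\ref{lemma-medida-zkzero}. In the $k=0$ case of Theorem~\ref{thm-char-hfc} this was automatic because $\mu$ was FC and $\W_{-ih}$ is unitary; but a positive $k$-FC measure need not be FC, so the bound $\int|K_{z}(w)|^{2}e^{-|w|^{2}}d|\lambda_{h}|(w)<\infty$ does not follow immediately. I expect to handle this by exploiting the unitarity of $\W_{ih}$ exactly as in the proof of $2\Rightarrow3$ in Theorem~\ref{thm-char-hfc} to control $|\mu_{h}|$ by $|\mu|$ in the relevant norm, and then transferring the $k$-FC bound into the integrability needed by the Fourier--Stieltjes argument of Lemma~\ref{lemma-medida-zkzero}; alternatively, one can first pass to the weighted measure $(\lambda_{h})_{k}$, which is an honest FC measure by Proposition~\ref{prop-Ck}, verify the $(M)$-condition there, and then pull the conclusion back to $\lambda_{h}$ using the polynomial weight relating the two.
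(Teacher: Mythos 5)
Your proposal matches the paper's proof, which consists of the single remark that the argument of Theorem \ref{thm-char-hfc} carries over verbatim with Lemma \ref{lemma-medida-zkzero} now applied for $|k|\neq 0$. The obstacle you flag at the end is not actually there: for $k\in(\Entero_{+}/2)^{n}$ the weight defining $\mu_{k}$ is $\geq 1$, so by Propositions \ref{prop-crmur} and \ref{prop-Ck} a positive $k$-FC measure is automatically an FC measure and hence satisfies the $(M)$-condition, so the control of $|\lambda_{h}|$ via the unitarity of $\W_{ih}$ goes through exactly as in Theorem \ref{thm-char-hfc}.
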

		\begin{proof}
	The proof is almost the same as the one of  Theorem \ref{thm-char-hfc},  here we  use Lemma \ref{lemma-medida-zkzero} for any $|k|\neq0$.
		\end{proof}
	By Theorem \ref {thm-char-2kfc}, every Toeplitz operator $\bT_{\ppartial_{R}^{2k}\mu}$ with real coderivative $\ppartial_{R}^{2k}\mu$ of order $2k$ of some horizontal measure $\mu$ as symbol  is a horizontal operator. Hence,  by Proposition \ref{criterio-horizontal-operators}, it is unitarily equivalent to the multiplication operator by certain $L_{\infty}$-function.  We give now the explicit formula for this $L_{\infty}$-function.
		\begin{theorem}[\textbf{Diagonalization of $\bT_{\ppartial_{R}^{2k}\mu}$}]\label{diag-To-khFC}
			Let  $k\in(\Entero_{+}/2)^{n}$ and $\mu= \varrho\otimes\nu_{n}$ be a $k$-hFC measure for $\Fock$. Then the Toeplitz operator $\bT_{\ppartial_{R}^{2k}\mu}$ is unitarily equivalent to $\Bb \bT_{\ppartial_{R}^{2k}\mu}\Bb^{*}=\gamma_{\varrho,2k}\mathrm{Id},$
			where the function $\gamma_{\varrho,2k}\colon\Realn\rightarrow\Complex$ is given by
			\begin{equation}\label{gamma-varro-x}
			\gamma_{\varrho,2k}(x)=
			 \left(\frac{2}{\pi}\right)^{n/2}\int_{\Realn}\h_{2k}^{(n)}(\sqrt{2}x-y)e^{-(x-\sqrt{2}y)^{2}}d\varrho(y),\quad x\in\Realn ;
			\end{equation}
			here $\h_{m}^{(n)}$ is the product of the Hermite polynomials $\h_{m_{j}},$ i.e.,
			\begin{equation*}
			 \h_{m}^{(n)}(t)=\prod_{j=1}^{n}(-1)^{m_{j}}e^{t_{j}^{2}}\frac{d^{m_{j}}}{dt_{j}^{m_{j}}}(e^{-t_{j}^{2}}),\quad t=(t_{j})_{j=1}^{n}\in\Realn.
			\end{equation*}
		\end{theorem}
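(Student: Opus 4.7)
The plan follows the template of Theorem \ref{diag-To-hFC}: I set $S := \Bb^{*} M_{\gamma_{\varrho,2k}} \Bb$ and aim to identify $S$ with $\bT_{\ppartial_{R}^{2k}\mu}$ by showing that the two operators share the same Berezin transform. Since $\bT_{\ppartial_{R}^{2k}\mu}$ is bounded (Proposition \ref{prop-boundedness-Fab}) and horizontal (Theorem \ref{thm-char-2kfc}), injectivity of the Berezin transform on $\BFn$ will yield $S = \bT_{\ppartial_{R}^{2k}\mu}$, and the unitary equivalence statement is immediate.

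The computation of $\widetilde{S}$ proceeds as in Theorem \ref{diag-To-hFC}. Starting from Example \ref{example-rnucleo} and \eqref{normalized-kernel}, a direct manipulation of the exponent (using $z^{2} + \overline{z}^{2} = 2(u^{2} - v^{2})$ and $|z|^{2} = u^{2} + v^{2}$ with $z = u + iv$) gives
\begin{equation*}
|(\Bb k_{z})(x)|^{2} = \pi^{-n/2}\, e^{-(x - \sqrt{2}u)^{2}}, \qquad x \in \Realn.
\end{equation*}
Inserting \eqref{gamma-varro-x} into $\widetilde{S}(z) = \innerp{M_{\gamma_{\varrho,2k}} \Bb k_{z}}{\Bb k_{z}}_{L_{2}(\Realn)}$ and applying Fubini reduces matters to the inner integral
\begin{equation*}
I(y,u) := \int_{\Realn} \h_{2k}^{(n)}(\sqrt{2}x - y)\, e^{-(x - \sqrt{2}y)^{2} - (x - \sqrt{2}u)^{2}}\, dx.
\end{equation*}

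The central computational step is to evaluate $I(y,u)$. Completing the square gives $(x - \sqrt{2}y)^{2} + (x - \sqrt{2}u)^{2} = 2\left(x - \tfrac{y+u}{\sqrt{2}}\right)^{2} + (y-u)^{2}$, and the substitution $s = \sqrt{2}x - y - u$ (so that $\sqrt{2}x - y = s + u$, $dx = 2^{-1/2} ds$) factors $I(y,u)$ as $2^{-n/2}\, e^{-(y-u)^{2}} \int_{\Realn} \h_{2k}^{(n)}(s + u)\, e^{-s^{2}}\, ds$. The tensor product structure of $\h_{2k}^{(n)}$ combined with the classical identity $\int_{\Real} \h_{m}(s + a)\, e^{-s^{2}}\, ds = \sqrt{\pi}\,(2a)^{m}$ (obtained from the generating function $e^{2\tau t - \tau^{2}} = \sum_{m \ge 0} \h_{m}(t)\, \tau^{m}/m!$) evaluates the remaining integral to $\pi^{n/2}\, 2^{|2k|}\, u^{2k}$. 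Collecting the prefactors yields
\begin{equation*}
\widetilde{S}(z) = 2^{|2k|}\, \pi^{-n/2}\, u^{2k} \int_{\Realn} e^{-(y-u)^{2}}\, d\varrho(y).
\end{equation*}

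On the other hand, combining \eqref{Bere-real-code-mu} with Lemma \ref{lemma-sep-int-ber} applied to $\mu = \varrho \otimes \nu_{n}$ produces exactly the same expression for $\widetilde{\ppartial_{R}^{2k}\mu}(z) = \widetilde{\bT_{\ppartial_{R}^{2k}\mu}}(z)$. Injectivity of the Berezin transform forces $S = \bT_{\ppartial_{R}^{2k}\mu}$, proving the formula. The main obstacle is purely bookkeeping: tracking the powers of $2$, $\pi$ and $\sqrt{2}$ through the Gaussian substitutions and applying the Hermite identity in the tensor product form; setting $k = 0$ gives $\h_{0}^{(n)} \equiv 1$ and recovers Theorem \ref{diag-To-hFC} verbatim.
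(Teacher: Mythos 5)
Your proposal is correct and follows essentially the same route as the paper: both set $S=\Bb^{*}M_{\gamma_{\varrho,2k}}\Bb$, reduce $\widetilde{S}$ via Gaussian substitutions to the shifted Hermite integral $\int_{\Real}\h_{m}(s+a)e^{-s^{2}}\,ds=\sqrt{\pi}\,(2a)^{m}$ (the paper cites Gradshteyn--Ryzhik 7.374--6), obtain $\widetilde{S}(z)=2^{|2k|}\pi^{-n/2}(\RE z)^{2k}\int_{\Realn}e^{-(y-u)^{2}}d\varrho(y)$, and conclude by matching Berezin transforms and invoking injectivity. The only differences are cosmetic (your explicit completion of the square versus the paper's direct change of variables $t=\sqrt{2}x-y$), and the minor powers-of-$\pi$ bookkeeping tension between \eqref{Bere-real-code-mu} and Lemma \ref{lemma-sep-int-ber} that you inherit is already present in the paper's own proof.
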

		\begin{proof}
			Let $S=\Bb^{*}M_{\gamma_{\varrho,2k}}\Bb$. Then, by \eqref{Bargmann-inversa-kernel} and Tonelli's theorem, a simple computation shows that for every $z=u+iv\in\Complex$
			\begin{align*}
			 \widetilde{S}(u+iv)&=e^{-|u+iv|^{2}}\int_{\Realn}\gamma_{\varrho,2k}(x)|(\Bb K_{u+iv})(x)|^{2}dx\\
			 &=\frac{2^{n/2}}{\pi^{n}}\int_{\Realn}\left(\int_{\Realn}\h_{2k}^{(n)}(\sqrt{2}x-y)e^{-(x-\sqrt{2}y)^{2}}e^{-(\sqrt{2}u-x)^{2}}dx\right)d\varrho(y)\\
			 &=\frac{2^{n/2}}{\pi^{n}}\int_{\Realn}e^{-(y-u)^{2}}\left(\int_{\Realn}\h_{2k}^{(n)}(\sqrt{2}x-y)e^{-(\sqrt{2}x-(y+u))^{2}}dx\right)d\varrho(y)\\
			 &=\left(\frac{1}{\pi^{n}}\int_{\Realn}e^{-(y-u)^{2}}d\varrho(y)\right)\left(\int_{\Realn}\h_{2k}^{(n)}(t)e^{-(t-u)^{2}}dt\right)\\
			 &=\left(\frac{1}{\pi^{n}}\int_{\Realn}e^{-(y-u)^{2}}d\varrho(y)\right)\left(\prod_{j=1}^{n}\int_{\Realn}\h_{2k_{j}}(t_{j})e^{-(t_{j}-u_{j})^{2}}dt_{j}\right)\\				 &=\frac{2^{2|k|}(\RE \,z)^{2k}}{\pi^{n/2}}\int_{\Realn}e^{-(y-u)^{2}}d\varrho(y)
			\end{align*}
 (\cite[Formula 7.374- 6]{Gradshteyn} applied $n$-times.)			By \eqref{Bere-real-code-mu} and  \eqref{berezin-convo}, the last integral is exactly the Berezin transform   $\widetilde{ \ppartial_{R}^{2k}\mu}$ of $\bT_{\ppartial_{R}^{2k}\mu}$,  where  $\mu=\varrho\otimes\nu_{n}$. Therefore, by the injectivity property of the Berezin transform,  $S=\bT_{\ppartial_{R}^{2k}\mu}$, with $\mu= \varrho\otimes \nu_{n}$.
		\end{proof}

	\section{  $(\alpha,k)$-hFC type measures for $\Fock$  with $k\geq \alpha$}\label{Section-Hor-akFC}
	Note that if $\mu=\varrho\otimes\nu_{n}$ is a $k$-FC type measure for $\Fock$, then $\mu_{k}$, given by \eqref{measure-muk}, has the form
	$\mu_{k}=\varrho_{k}\otimes \nu_{n,-k}$, where
	\begin{align*}
	\varrho_{k}(X)&=\int_{X}\prod_{j=1}^{n}(1+x_{j}^{2})^{k_{j}}d\varrho(x),\quad X\in\Borelrn,\\
	 \nu_{n,-k}(Y)&=\int_{Y}\prod_{j=1}^{n}\frac{dy_{j}}{(1+y_{j}^{2})^{-k_{j}}},\quad Y\in\Borelrn.
	\end{align*}
	So, if $\mu$ is a $k$-hFC then $\mu_{k}$ is not a horizontal FC type measure for $\Fock$. To handle this circumstance, we introduce the notion of   $\alpha$-horizontal objects.
	\begin{definition}[\textbf{$\alpha$-Horizontal measures}]\label{ahor-meas}
		Let $\alpha\in\Entero^{n}$. We say that $\mu\in\Borelpcn$ is $\alpha$-\emph{horizontal} if there exists $\varrho\in\Borelprn$ satisfying $\mu=\varrho\otimes\nu_{n,\alpha}$, where
		\begin{equation*}
		 d\nu_{n,\alpha}(y)=\prod_{j=1}^{n}\frac{dy_{j}}{(1+y_{j}^{2})^{\alpha_{j}}}.
		\end{equation*}
	 Furthermore, if  $\mu=\varrho\otimes\nu_{n,\alpha}$ is a $k$-FC type measure for $\Fock$ we say that $\mu$ is $(\alpha,k)$\emph{-hFC type measure}; if $|\alpha|=|k|=0$ the measure $\mu$ is  an \emph{hFC}, as defined previously.
	\end{definition}

	\begin{proposition} \label{a-k-hFC--a-2a-k-hFC}
		Let $\alpha\in\Entero^{n}$ and  $k,p\in(\Entero_{+}/2)^{n}$. A  complex  Borel measure $\mu$  is  an $(\alpha,k)$-hFC for $\Fock$ if and only if $\mu_{k-p}$ is a $(p+\alpha-k,p)$-hFC.
	\end{proposition}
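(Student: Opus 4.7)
The plan is to reduce the proposition to two independent observations: (i) that the operation $\mu\mapsto \mu_{k-p}$ preserves the tensor-product (``horizontal'') structure while shifting the imaginary-part weight, and (ii) that Proposition \ref{prop-Ck} already handles the $k$-FC $\leftrightarrow$ $p$-FC equivalence. Unpacking Definition \ref{ahor-meas}, the statement ``$\mu$ is $(\alpha,k)$-hFC'' means precisely that $\mu=\varrho\otimes\nu_{n,\alpha}$ for some $\varrho\in\Borelprn$ \emph{and} $\mu$ is a $k$-FC type measure for $\Fock$, so I will verify these two conditions transform correctly under $\mu\mapsto\mu_{k-p}$.

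First I will do the bookkeeping for the tensor structure. Assuming $\mu=\varrho\otimes\nu_{n,\alpha}$, I substitute into \eqref{measure-muk} and use the product form of the weight $\prod_j(1+x_j^2)^{k_j-p_j}(1+y_j^2)^{k_j-p_j}$ to split the integral as a tensor product:
\begin{equation*}
d\mu_{k-p}(x+iy)=\Bigl(\prod_{j=1}^{n}(1+x_j^2)^{k_j-p_j}d\varrho(x)\Bigr)\Bigl(\prod_{j=1}^{n}\frac{dy_j}{(1+y_j^2)^{\alpha_j-(k_j-p_j)}}\Bigr),
\end{equation*}
which is exactly $\varrho_{k-p}\otimes\nu_{n,p+\alpha-k}$ with $d\varrho_{k-p}(x)=\prod_{j}(1+x_j^2)^{k_j-p_j}d\varrho(x)$. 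Hence $\mu_{k-p}$ is $(p+\alpha-k)$-horizontal. Conversely, if we know $\mu_{k-p}=\sigma\otimes\nu_{n,p+\alpha-k}$ for some $\sigma\in\Borelprn$, then multiplying $x$-densities by $\prod_j(1+x_j^2)^{p_j-k_j}$ and $y$-densities by $\prod_j(1+y_j^2)^{p_j-k_j}$ (an invertible operation on regular Borel measures since these weights are positive and locally bounded) recovers $\mu=\varrho\otimes\nu_{n,\alpha}$ with $d\varrho(x)=\prod_j(1+x_j^2)^{p_j-k_j}d\sigma(x)$. This establishes the $\alpha$-horizontal $\leftrightarrow$ $(p+\alpha-k)$-horizontal equivalence.

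Second, Proposition \ref{prop-Ck} directly gives the $k$-FC $\leftrightarrow$ $p$-FC equivalence: $\mu$ is a $k$-FC type measure if and only if $\mu_{k-p}$ is a $p$-FC type measure (the constants coincide, up to the shift in indices). Combining the two equivalences yields that $\mu$ is $(\alpha,k)$-hFC iff $\mu_{k-p}$ is $(p+\alpha-k,p)$-hFC, which is precisely the proposition.

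I do not expect any serious obstacle here; the proof is essentially a rearrangement of Fubini together with the citation of Proposition \ref{prop-Ck}. The only point deserving a brief justification is the invertibility of the reweighting $\varrho\leftrightarrow\varrho_{k-p}$ within the class of positive regular Borel measures on $\Realn$, which is immediate because the densities $\prod_j(1+x_j^2)^{\pm(k_j-p_j)}$ are strictly positive continuous functions, so multiplication by them is a bijection of $\Borelprn$ onto itself.
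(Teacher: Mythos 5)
Your proof is correct and follows essentially the same route as the paper's: both split the claim into the tensor-product bookkeeping for $\mu\mapsto\mu_{k-p}$ (yielding $\varrho_{k-p}\otimes\nu_{n,p+\alpha-k}$ and its inverse reweighting) and an appeal to Proposition \ref{prop-Ck} for the $k$-FC $\leftrightarrow$ $p$-FC equivalence. Your explicit remark on the invertibility of the reweighting is a minor addition the paper handles implicitly by simply carrying out the reverse computation.
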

\begin{proof}
	Suppose that $\mu$ is a positive $(\alpha,k)$-hFC measure for $\Fock$. Then  $\mu$ is a $k$-FC measure for $\Fock$ such that $\mu=\varrho\otimes\nu_{n,\alpha}$ for some regular measure $\varrho$ of $\Realn.$ By Proposition \ref{prop-Ck},   $\mu_{k-p}$ is a $p$-FC measure for $\Fock$ and
	\begin{align*}
	\mu_{k-p}(X)&=\int_{X} \prod_{j=1}^{n}(1+x_{j}^{2})^{k_{j}-p_{j}}(1+y_{j}^{2})^{k_{j}-p_{j}}d\varrho(x)d\nu_{n,\alpha}(y)\\
	&=\varrho_{k-p}\otimes \nu_{n,p+\alpha-k}(X),\quad \text{for any Borel set $X$ of $\Complexn$.}
	\end{align*}
	Here $\varrho_{k-p}$ denotes  the positive measure
	 $$\varrho_{k-p}(Y)=\int_{Y}\prod_{j=1}^{n}(1+x_{j}^{2})^{k_{j}-p_{j}}d\varrho(x).$$
	
	\noindent
	Conversely, if $\mu_{k-p}$ is a $(p+\alpha-k,p)$-hFC then $\mu_{k-p}$ is a $p$-FC type measure for $\Fock$ and hence $\mu$ is $k$-FC type measure for $\Fock$, by Proposition \ref{prop-Ck}. Moreover, there exists a $\varrho\in\Borelprn$ such that
	$\mu_{k-p}=\varrho\otimes\nu_{n,p+\alpha-k}$. Thus,
	\begin{align*}
	\mu(X)&=\int_{X} \prod_{j=1}^{n}(1+x_{j}^{2})^{p_{j}-k_{j}}(1+y_{j}^{2})^{p_{j}-k_{j}}d\mu_{k-p}(y)\\
	&=\int_{A} \prod_{j=1}^{n}(1+x_{j}^{2})^{p_{j}-k_{j}}(1+y_{j}^{2})^{p_{j}-k_{j}}d\varrho(x)d\nu_{n,p+\alpha-k}(y)\\
	&=\varrho_{p-k}\otimes \nu_{n,\alpha}(X),\quad \text{for any Borel set $X\subset \Complexn$.}
	\end{align*}
Here $\varrho_{p-k}$ denotes  the positive measure
$\displaystyle\varrho_{p-k}(Y)=\int_{Y}\prod_{j=1}^{n}(1+x_{j}^{2})^{p_{j}-k_{j}}d\varrho(x).$
\end{proof}
\noindent	
Now, if  $\alpha,k\in(\Entero_+/2)^{n}$, with $k\geq \alpha$ and  $\mu=\varrho\otimes\nu_{n,\alpha}$ is  a positive  $(\alpha,k)$-hFC type measure for $\Fock$, then, by Proposition \ref{a-k-hFC--a-2a-k-hFC},  $\mu_{\alpha}$ is a $(0,k-\alpha)$-hFC type measure for $\Fock$. i.e., $\mu_{\alpha}=\varrho_{\alpha}\otimes\nu_{n,0}$, with   $ d\varrho_{\alpha}(y)=\prod_{j=1}^{n}(1+y_{j}^{2})^{\alpha_{j}}d\varrho(y)$, is a $(k-\alpha)$-FC measure, which is horizontal. Hence, by Theorem \ref{thm-char-2kfc}, the Toeplitz operator $\bT_{\ppartial_{R}^{2(k-\alpha)}\mu_{\alpha}}$ is horizontal and, by Theorem \ref{diag-To-khFC}, it is unitarily equivalent to $\Bb \bT_{\ppartial_{R}^{2(k-\alpha)}\mu_{\alpha}}\Bb^{*}=M_{\gamma_{\varrho_{\alpha},2(k-\alpha)}}$, where  $\gamma_{\varrho_{\alpha},2(k-\alpha)}$ is as in \eqref{gamma-varro-x}.

	\section{$\Ln$-invariant $k$-FC type measures}\label{Section-LFC}
	\noindent
	Let $\Ln$ be any Lagrangian plane of the symplectic vector space $(\Real^{2n},\omega_{0})$. In this section  we extend the results on $k$-hFC type measures to  $\Ln$-invariant $k$-FC type measures.
	
	\begin{definition}[\textbf{$\Ln$-FC}]\label{Linv-meas}
		Let  $\Ln\in\operatorname{Lag}(2n,\Real)$. A complex valued  Borel regular measure $\mu$ on $\Complexn$ is said to be  invariant under \emph{Lagrangian translations} ($\Ln$-invariant, in short) if for each Borel subset $E$ of $\Complexn$ and for  every $h\in\Ln$ it satisfies
		$$\mu(E-h)=\mu(E).$$
		In particular,   $\Ln=i\Realn$ corresponds to the horizontal case. Furthermore,  if  $k\in(\Entero_+/2)^n$  and  $\mu$ is a $k$-FC type measure for $\Fock$, which is  $\Ln$-invariant, then  we say simply that $\mu$ is  $k$-$\Ln$-FC.
	\end{definition}
	\noindent
	By the transitivity property of $\operatorname{U}(2n,\Real)$ \cite[Proposition 43]{Gosson} and due to the isomorphism $\operatorname{U}(2n,\Real)\simeq\operatorname{U}(n,\Complex)$,  there exists a unitary matrix $\mathbf{X}\in\operatorname{U}(n,\Complex)$ such that $\mathbf{X}\Ln=i\Realn.$
For  $\mathbf{X}\in\operatorname{U}(n,\Complex)$,   we denote by $V_{\mathbf{X}}$ the linear operator  $V_{\mathbf{X}}\colon\Ele(\Complexn,d\g_{n})\rightarrow\Ele(\Complexn,d\g_{n})$ given by
	\begin{equation}\label{eq-VB}
	(V_{\mathbf{X}}f)(z)=f(\mathbf{X}^{*}z),\quad z\in\Complexn.
	\end{equation}
	\noindent
	Since  $\mathbf{X}^{*}=\mathbf{X}^{-1}\in\operatorname{U}(n,\Complex)$,   $V_{\mathbf{X}}$ is a unitary operator, with $V_{\mathbf{X}}^{*}=V_{\mathbf{X}^{-1}}$.
 A Borel measure $\mu$ on $\Complexn$ pushes forward to the measure  $\mu_{\mathbf{X}}$ on $\Complexn$ by
	\begin{equation}\label{med-muX}
	\mu_{\mathbf{X}}(E)=\mu(\mathbf{X}E)=\mu\left(\left\{\mathbf{X}z\colon z\in\,E\right\}\right), \quad \text{for all Borel sets }\,E.
	\end{equation}
	Observe that the regularity of $\mu$ implies the regularity of $\mu_{\mathbf{X}}$. Furthermore,   $\mu$ is a FC measure for $\Fock$ provided  $\mu_{\mathbf{X}}$ is FC; then, by \eqref{eq-VB}, for every $f\in\Fock$,
	\begin{align*}	 \int_{\Complexn}|f(w)|^{2}d|\mu_{\mathbf{X}}|(w)&=\int_{\Complexn}|f(\mathbf{X}z)|^{2}d|\mu|(z)=\int_{\Complexn}|(V_{\mathbf{X}^{-1}}f)(z)|^{2}d|\mu|(z).
	\end{align*}
	In addition,  $\mu$ is a hFC  type measure iff $\mu_{\mathbf{X}}$ is $\Ln$-FC, since for any Borel set $E$
	 $$\mu_{\mathbf{X}}(E-h)=\mu(\mathbf{X}E-\mathbf{X}h)=\mu(\mathbf{X}\mathbf{}E-it)=\mu(\mathbf{X}E)=\mu_{\mathbf{X}}(E).$$
	Therefore, if $\mu$ is a $\Ln$-FC measure, then $\mu_{\mathbf{X}^{*}}$ is horizontal and, by Theorem \ref{thm-char-hfc}, $\mu_{\mathbf{X}^{*}}=\varrho\otimes\nu_{n}$ for some Borel regular $\varrho$. Since the matrix  $\mathbf{X}\in\operatorname{U}(n,\Complex)$ satisfying $\mathbf{X}\Ln=i\Realn$ is not unique, therefore, for any other $\mathbf{Y}\in\operatorname{U}(n,\Complex)$ such that $\mathbf{Y}\Ln=i\Realn$, the relation $\mu_{\mathbf{Y}^{*}}=\eta\otimes\nu_{n}$ holds and
	 $$\mu_{\mathbf{X}^{*}}(E)=\mu_{\mathbf{Y}^{*}}(\mathbf{Y}\mathbf{X}^{*}E),\quad E\in\Borelcn.$$
	In addition, it is easy to check that $t\mapsto \mathbf{Y}\mathbf{X}^{*}t$ is an automorphism of $i\Realn$.
	\begin{proposition}
		Let  $\Ln\in\operatorname{Lag}(2n,\Real)$,   $\mathbf{X}\in\operatorname{U}(n,\Complex)$ be fixed  such that $\mathbf{X}\Ln=i\Realn$. If  $k\in(\Entero_{+}/2)^{n}$  then the C*-algebra $\mathcal{T}(k\text{-}\Ln\text{-}FC)$ generated by  Toeplitz operators $\bT_{\ppartial_{R}^{2k}\mu}$  is isometrically isomorphic to the C*-algebra $\mathcal{T}(k\text{-}hFC)$ generated by Toeplitz operators $\bT_{\ppartial_{R}^{2k}\eta}$.
	\end{proposition}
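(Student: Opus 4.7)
The natural candidate for the isomorphism is conjugation by the unitary operator $V_{\mathbf{X}}\in\B(\Fock)$ introduced in \eqref{eq-VB}: define $\Phi\colon\B(\Fock)\to\B(\Fock)$ by $\Phi(A)=V_{\mathbf{X}}AV_{\mathbf{X}}^{*}$. Being conjugation by a unitary, $\Phi$ is automatically an isometric $^*$-automorphism of $\B(\Fock)$. Hence the whole task reduces to showing that $\Phi$ maps the generating set of $\mathcal{T}(k\text{-}\Ln\text{-}FC)$ bijectively into (and onto) the generating set of $\mathcal{T}(k\text{-hFC})$; the isomorphism of the generated C$^{*}$-subalgebras will then be automatic.

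First I would set up the correspondence on the measure side. To a positive $k$-$\Ln$-FC measure $\mu$ associate the push-forward $\eta:=\mu_{\mathbf{X}^{*}}$ defined by \eqref{med-muX}. The paragraph immediately preceding the proposition shows that $\eta$ is horizontal; and the $k$-FC property transfers because $\mathbf{X}\in\operatorname{U}(n,\Complex)$ preserves the Euclidean norm, so the weight $e^{-|z|^{2}}$ in Definition~\ref{defin-kFC} is invariant, while the derivatives in \eqref{ineq-kFC} pull back correctly under the unitary $V_{\mathbf{X}}$. The map $\mu\mapsto\eta$ is clearly a bijection between positive $k$-$\Ln$-FC measures and positive $k$-hFC measures, and extends to complex-valued measures by linearity.

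Next I would establish the central identity
\begin{equation*}
V_{\mathbf{X}}\,\bT_{\ppartial_{R}^{2k}\mu}\,V_{\mathbf{X}}^{*}\;=\;\bT_{\ppartial_{R}^{2k}\eta},\qquad \eta=\mu_{\mathbf{X}^{*}},
\end{equation*}
via the sesquilinear-form description \eqref{ses-fom-Fmu2k}--\eqref{Fabeta=Tpamu}. Concretely, by \eqref{Fabeta=Tpamu} it suffices to verify $\Frc_{\mu,2k}(V_{\mathbf{X}}^{*}f,V_{\mathbf{X}}^{*}g)=\Frc_{\eta,2k}(f,g)$ for all $f,g\in\Fock$. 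Substituting $z=\mathbf{X}w$ in the integral defining the left-hand side sends $d\mu(z)$ to $d\eta(w)$, leaves $e^{-|z|^{2}}$ unchanged, and rewrites $\partial^{2k-\beta}(f\circ\mathbf{X})$ and $\overline{\partial^{\beta}(g\circ\mathbf{X})}$ via the holomorphic chain rule as linear combinations (with $\mathbf{X}$-weighted coefficients) of derivatives of $f$ and $g$ evaluated at $\mathbf{X}w$. The algebraic collapse I need is the identity
\begin{equation*}
\sum_{\beta\leq 2k}\binom{2k}{\beta}\partial^{2k-\beta}f\,\overline{\partial^{\beta}g}\;=\;(\ppartial+\overline{\ppartial})^{2k}(f\overline{g}),
\end{equation*}
which shows that the real coderivative acts on the product $f\bar g$ as a whole rather than on the multi-indices separately; together with the unitarity (and hence the complex-symplectic behaviour) of $\mathbf{X}$, this forces the right-hand side to reorganise into $\Frc_{\eta,2k}(f,g)$. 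Once this identity is in place, $\Phi$ is seen to carry each generator $\bT_{\ppartial_{R}^{2k}\mu}$ of $\mathcal{T}(k\text{-}\Ln\text{-}FC)$ to the generator $\bT_{\ppartial_{R}^{2k}\eta}$ of $\mathcal{T}(k\text{-hFC})$; the bijectivity of $\mu\leftrightarrow\eta$ makes the correspondence onto on generators, and by continuity and the $^*$-property of $\Phi$ the induced map extends to the desired isometric $^*$-isomorphism of the whole C$^{*}$-algebras.

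The main obstacle I anticipate is the bookkeeping in the last step: tracking the complex chain rule for $\mathbf{X}\in\operatorname{U}(n,\Complex)$ across the mixed combination $(\ppartial+\overline{\ppartial})^{2k}(f\overline{g})$ and confirming that only the global coderivative symbol survives, with no extra curvature-like terms from individual multi-indices. Once this invariance of the real coderivative under $V_{\mathbf{X}}$-conjugation is verified, the proposition follows immediately.
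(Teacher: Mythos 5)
Your strategy founders on the central intertwining identity $V_{\mathbf{X}}\,\bT_{\ppartial_{R}^{2k}\mu}\,V_{\mathbf{X}}^{*}=\bT_{\ppartial_{R}^{2k}\mu_{\mathbf{X}^{*}}}$: this is true for $|k|=0$ but \emph{false} for $|k|>0$ and generic $\mathbf{X}\in\operatorname{U}(n,\Complex)$. The real coderivative $\ppartial_{R}^{2k}=(\ppartial+\overline{\ppartial})^{2k}=\prod_{j}(\partial_{j}+\overline{\partial}_{j})^{2k_{j}}$ is anchored to the fixed coordinate directions $\partial/\partial x_{j}$, and a general complex unitary mixes the $x$- and $y$-directions, so the operator is not conjugation-invariant. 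Concretely, for $n=1$, $2k=2$, $\mathbf{X}=e^{i\theta}$, the substitution you propose turns $\Frc_{\mu,2}(V_{\mathbf{X}}^{*}f,V_{\mathbf{X}}^{*}g)$ into
\begin{equation*}
\pi^{-1}\int_{\Complex}\left(e^{2i\theta}f''\,\overline{g}+2f'\,\overline{g'}+e^{-2i\theta}f\,\overline{g''}\right)e^{-|w|^{2}}\,d\mu_{\mathbf{X}^{*}}(w),
\end{equation*}
which is the form of the \emph{rotated} coderivative $(e^{i\theta}\ppartial+e^{-i\theta}\overline{\ppartial})^{2}\mu_{\mathbf{X}^{*}}$, not of $\ppartial_{R}^{2}\mu_{\mathbf{X}^{*}}$. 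The same obstruction is visible on the Berezin side: by \eqref{Bere-real-code-mu} the transform of $V_{\mathbf{X}}\bT_{\ppartial_{R}^{2k}\mu}V_{\mathbf{X}}^{*}$ carries the factor $(\RE(\mathbf{X}^{*}z))^{2k}$ while that of $\bT_{\ppartial_{R}^{2k}\mu_{\mathbf{X}^{*}}}$ carries $(\RE z)^{2k}$, and injectivity of the Berezin transform then forces the two operators to differ. So the "bookkeeping" you flagged as the main obstacle is not mere bookkeeping --- the extra terms genuinely survive, and $\Phi$ does not carry generators to generators; indeed $V_{\mathbf{X}}\bT_{\ppartial_{R}^{2k}\mu}V_{\mathbf{X}}^{*}$ need not even be a horizontal operator, hence need not lie in $\mathcal{T}(k\text{-}hFC)$ at all.

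The paper sidesteps this entirely: it does not conjugate the operators but transports the \emph{symbols}. The correspondence is $\mu\mapsto\eta=\mu_{\mathbf{X}^{*}}$ on measures (a bijection between $k$-$\Ln$-FC and $k$-hFC measures, as the discussion preceding the proposition shows), and the generator attached to an $\Ln$-invariant $\mu$ is taken to be the Toeplitz operator of the $\Ln$-real coderivative $\ppartial_{R,\Ln}^{2k}\mu:=\ppartial_{R}^{2k}\mu_{\mathbf{X}^{*}}$ introduced just after this proposition; with that reading the two generating families coincide outright and the isometric isomorphism is immediate (essentially the identity map on operators, relabelled through $\mu\leftrightarrow\eta$). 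Your unitary-conjugation argument is the right instinct for the $|k|=0$ case and for the measure-theoretic half of the statement, but for $|k|>0$ you must either adopt the paper's redefinition of the symbol or replace $\ppartial_{R}^{2k}$ by the coderivative taken along the $\mathbf{X}$-rotated directions; as written, the proof does not go through.
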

	\begin{example}
		Let $\Ln=\Realn\times\{0\}$ and $\mu$ be an $(\alpha,k)$-$\Ln$-FC. Observe that the standard symplectic matrix $\mathbf{J}$  rotates  the Lagrangian plane $\{0\}\times \Realn$ to $\Realn\times\{0\}$. i.e., $$\left(\begin{matrix}0&I_{n}\\-I_{n}&0\end{matrix}\right)\left(\begin{matrix}0\\x\end{matrix}\right)=\left(\begin{matrix}x\\0\end{matrix}\right)$$
		Therefore,  if $\mathbf{X}=-iI_{n}\in\operatorname{U}(n,\Complex)$, then by \eqref{med-muX},
		\begin{align*}
		\mu(E\times F)&=\mu(\mathbf{X}\mathbf{X}^{*}(E\times F))=\mu_{\mathbf{X}}(\mathbf{X}^{*}(E\times F))=\mu_{\mathbf{X}}((-F)\times E)=\varrho(-F)\nu_{n}(-E).
		\end{align*}
		Thus, by Proposition \ref{a-k-hFC--a-2a-k-hFC}, the measure  $\mu_{\alpha}$ is a $(0,k-\alpha)$-hFC for $\Fock$, where $\mu_{\alpha}=\varrho_{\alpha}\otimes\nu_{n,0}$ and  $\displaystyle d\varrho_{\alpha}(y)=\prod_{j=1}^{n}(1+y_{j}^{2})^{\alpha_{j}}d\varrho(y)$. Therefore, by \eqref{gamma-varro-x}, the corresponding spectral function is given by
			\begin{equation*}
		\gamma_{\varrho_{\alpha},2(k-\alpha)}(x)=
		 \left(\frac{2}{\pi}\right)^{n/2}\int_{\Realn}\h_{2(k-\alpha)}^{(n)}(\sqrt{2}x-y)e^{-(x-\sqrt{2}y)^{2}}\prod_{j=1}^{n}(1+y_{j}^{2})^{\alpha_{j}}d\varrho(y),\quad x\in\Realn;
		\end{equation*}
		here $\h_{2(k-\alpha)}^{(n)}$ is the product of the Hermite polynomials $\h_{2(k_{j}-\alpha_{j})}$. i.e.,
		\begin{equation*}
		 \h_{2(k-\alpha)}^{(n)}(t)=\prod_{j=1}^{n}e^{t_{j}^{2}}\frac{d^{2(k_{j}-\alpha_{j})}}{dt_{j}^{2(k_{j}-\alpha_{j})}}(e^{-t_{j}^{2}}),\quad t=(t_{j})_{j=1}^{n}\in\Realn.
		\end{equation*}
		\noindent
		On the other hand, let $\Delta=\left\{(x,x)\in\Real^{2n}\colon x\in\Realn\right\}$ and $\mu$ be a $k$-$\Delta$FC. Then,  since $$\left(\begin{matrix}I_{n}&I_{n}\\-I_{n}&I_{n}\end{matrix}\right)\left(\begin{matrix}0\\x\end{matrix}\right)=\left(\begin{matrix}x\\x\end{matrix}\right)$$
		 if $\mathbf{X}=\frac{1}{2}\left(I_{n}-iI_{n}\right)\in\operatorname{U}(n,\Complex)$,  and  by \eqref{med-muX},
		\begin{align*}
		\mu\left(E \times F\right)&=\mu\left(\mathbf{X}\mathbf{X}^{*}\left(E\times F\right)\right)=\mu_{\mathbf{X}}(\mathbf{X}^{*}(E\times F))=\mu_{\mathbf{X}}\left(\frac{(E+F)}{2}\times \frac{(F-E)}{2}\right)\\
		&=\varrho\left(\frac{E+F}{2}\right)\nu_{n}\left(\frac{F-E}{2}\right).
		\end{align*}
	\end{example}
\begin{definition}[\textbf{$\alpha\Ln$-invariant}]\label{aLinv-meas}
	Let  $\Ln\in\operatorname{Lag}(2n,\Real)$,   $\mathbf{X}\in\operatorname{U}(n,\Complex)$ be fixed and such that $\mathbf{X}\Ln=i\Realn$. 	If $\alpha\in\Entero^{n}$ then we say that $\mu\in\Borelpcn$ is $\alpha\Ln$-\emph{invariant} if $\mu_{\mathbf{X}^{*}}$ is $\alpha$-horizontal. Furthermore,  if  $k\in(\Entero_{+}/2)^{n}$ and  $\mu$ is a $k$-FC type measure for $\Fock$, which is  $\alpha\Ln$-invariant, then  we say simply that $\mu$ is  $(\alpha,k)$-$\Ln$-FC.
\end{definition}

		\begin{theorem}\label{thm-7-4}
		Let  $\Ln\in\operatorname{Lag}(2n,\Real)$,   $\mathbf{X}\in\operatorname{U}(n,\Complex)$ be fixed  and $\mathbf{X}\Ln=i\Realn$. If  $\alpha\in\Entero^{n}$ and $k\in (\mathbb{Z}_{+}/2)^{n}$, satisfying $k\geq \alpha$, then the C*-algebra $\mathcal{T}((\alpha,k)\text{-}\Ln\text{-}FC)$ generated by the Toeplitz operators $\bT_{\ppartial_{R}^{2(k-\alpha)}\mu_{\mathbf{X}^{*},\alpha}}$  is isometrically isomorphic to $\mathcal{T}((\alpha,k)\text{-}hFC)$.
	\end{theorem}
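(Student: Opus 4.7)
The approach is to establish a bijection $\mu \leftrightarrow \eta := \mu_{\mathbf{X}^{*}}$ between the class of $(\alpha,k)$-$\Ln$-FC measures and the class of $(\alpha,k)$-hFC measures, and then to observe that, under this bijection, the generators of the two C*-algebras coincide \emph{as the same operators on} $\Fock$. Consequently $\mathcal{T}((\alpha,k)\text{-}\Ln\text{-}FC)$ and $\mathcal{T}((\alpha,k)\text{-}hFC)$ turn out to be literally the same subalgebra of $\mathcal{B}(\Fock)$, which is a fortiori isometrically isomorphic to itself.

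For the bijection itself, by Definition \ref{aLinv-meas} the measure $\mu$ is $\alpha\Ln$-invariant precisely when $\mu_{\mathbf{X}^{*}}$ is $\alpha$-horizontal, so the horizontality half of the hypothesis transfers automatically in both directions. What remains is the equivalence: $\mu$ is a $k$-FC measure for $\Fock$ if and only if $\mu_{\mathbf{X}^{*}}$ is, which is the main technical step. I would establish this by adapting the unitary-equivalence argument of the proposition immediately preceding Definition \ref{aLinv-meas} (its $\alpha=0$ analog): the unitary operator $V_{\mathbf{X}}$ on $\Fock$ defined in \eqref{eq-VB} intertwines $\mu$ and $\mu_{\mathbf{X}^{*}}$ under the change of variables $u = \mathbf{X}w$, and the $k$-FC estimate \eqref{ineq-kFC} can be transferred by combining the unitarity of $V_{\mathbf{X}}$ with a chain-rule computation for how partial derivatives of holomorphic functions transform under the complex-linear substitution by $\mathbf{X}$. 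The key input is that the induced action on $N$-th order holomorphic derivatives is unitary on the symmetric tensor power of $\Complexn$, which compensates for the non-invariance of the weight $\prod_{j}(1+x_{j}^{2})^{k_{j}}(1+y_{j}^{2})^{k_{j}}$ appearing in Proposition \ref{prop-crmur} under an $\mathbf{X}$-change of variables.

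Once the bijection is in place, the identification of generators is immediate from the definition \eqref{measure-muk} of the $\alpha$-weighted measure: both $\eta_{\alpha}$ and $\mu_{\mathbf{X}^{*},\alpha}$ are obtained from the same underlying measure $\eta = \mu_{\mathbf{X}^{*}}$ by multiplication by the same scalar weight $\prod_{j=1}^{n}(1+x_{j}^{2})^{\alpha_{j}}(1+y_{j}^{2})^{\alpha_{j}}$, hence they coincide, and therefore $\bT_{\ppartial_{R}^{2(k-\alpha)}\mu_{\mathbf{X}^{*},\alpha}} = \bT_{\ppartial_{R}^{2(k-\alpha)}\eta_{\alpha}}$ as operators on $\Fock$. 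The two generating families then agree and the C*-algebras coincide. The main obstacle is the technical verification in the second paragraph, namely the careful accounting of how the $k$-FC estimate behaves under the complex-unitary change of coordinates $\mathbf{X}$, where the non-invariance of the coordinate weight is exactly cancelled by the non-invariance of partial derivatives under the same substitution.
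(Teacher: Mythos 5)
The paper states Theorem \ref{thm-7-4} without any proof, so there is no argument of the authors' to measure yours against; your first and third paragraphs reconstruct what the surrounding text plainly intends, namely that $\mu\mapsto\eta:=\mu_{\mathbf{X}^{*}}$ should identify the two classes of measure-symbols, after which the two generating families (and hence the two $C^*$-algebras) coincide as subsets of $\mathcal{B}(\Fock)$. That part is fine.

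The gap is in your second paragraph. The equivalence you need --- $\mu$ is $k$-FC if and only if $\mu_{\mathbf{X}^{*}}$ is $k$-FC --- is what makes the correspondence surjective onto the $(\alpha,k)$-hFC class, and the cancellation you invoke does not occur for an individual multi-index $k$. Under $z\mapsto\mathbf{X}^{*}z$ the chain rule expresses $\partial^{k}(V_{\mathbf{X}}f)$ as a linear combination of \emph{all} derivatives $\partial^{k'}f$ with $|k'|=|k|$, so unitarity on the symmetric tensor power can at best transfer the property ``$k'$-FC for every $k'$ with $|k'|=|k|$,'' not the single-$k$ estimate \eqref{ineq-kFC}. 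Correspondingly, the weight $\prod_{j}(1+x_{j}^{2})^{k_{j}}(1+y_{j}^{2})^{k_{j}}$ of Proposition \ref{prop-crmur} is genuinely not $\mathbf{X}$-invariant, and nothing cancels this: for $n=2$ and $k=(1,0)$, let $\mu$ be two-dimensional Lebesgue measure on the plane $\{z_{1}=0\}$ (a $(1,0)$-FC measure, since the weight equals $1$ on its support) and let $\mathbf{X}$ be the coordinate swap; then $(\mu_{\mathbf{X}^{*}})_{(1,0)}$ assigns unbounded mass to unit polydisks along the support, so $\mu_{\mathbf{X}^{*}}$ is not $(1,0)$-FC. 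Hence the unconditional equivalence you assert is false, and any repair must exploit the product structure $\varrho\otimes\nu_{n,\alpha}$ of $\mu_{\mathbf{X}^{*}}$, which your sketch never uses. Note also that boundedness of the generators $\bT_{\ppartial_{R}^{2(k-\alpha)}\mu_{\mathbf{X}^{*},\alpha}}$ already presupposes, via Propositions \ref{a-k-hFC--a-2a-k-hFC} and \ref{prop-boundedness-Fab}, that $\mu_{\mathbf{X}^{*}}$ (not $\mu$) is $k$-FC, a condition Definition \ref{aLinv-meas} does not impose. So either one reads the definition as placing the $k$-FC requirement on $\mu_{\mathbf{X}^{*}}$, in which case the theorem is essentially tautological and your second paragraph is unnecessary, or one keeps the definition as written, in which case your technical step is the entire content of the theorem and the argument you sketch does not close it.
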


Finally, we express the conditions of Theorems \ref{thm-char-2kfc},  \ref{diag-To-khFC} and \ref{thm-7-4} in terms of the measure $\mu$ itself. To do this, we introduce $\Ln$-real coderivatives.

	\begin{definition}[\textbf{$\Ln$-real coderivatives}]
		Let  $\Ln\in\operatorname{Lag}(2n,\Real)$,   $\mathbf{X}\in\operatorname{U}(n,\Complex)$ be fixed  such that $\mathbf{X}\Ln=i\Realn$. If  $k\in(\Entero_{+}/2)^{n}$ and  $\mu$ is a $k$-FC type measure for $\Fock$,  then
		\begin{equation}
		\ppartial_{R,\Ln}^{2k}\mu=\ppartial_{R}^{2k}\mu_{\mathbf{X}^{*}}
		\end{equation}
		is called the $k$th \emph{$\Ln$-real coderivative} of the measure $\mu$.
	\end{definition}
\begin{theorem}[\textbf{Criterion for  $k$-$\Ln$-FC measures}]\label{thm-char-2klfc}
		Let  $\Ln\in\operatorname{Lag}(2n,\Real)$,   $\mathbf{X}\in\operatorname{U}(n,\Complex)$ be fixed  such that $\mathbf{X}\Ln=i\Realn$,  $k\in(\Entero_{+}/2)^{n}$  and $\mu$ be a complex $k$-FC measure for $\Fock$. Then the following conditions are equivalent:
	\begin{enumerate}
		\item $\W_{-h}\bT_{\ppartial_{R,\Ln}^{2k}\mu}\W_{h}=\bT_{\ppartial_{R,\Ln}^{2k}\mu}$ for every $h\in\Ln$.
		\item  $\widetilde{\ppartial_{R,\Ln}^{2k}\mu}$ is a $\Ln$-invariant function.
		\item  $\mu$ is $\Ln$-invariant, i.e., for every Borel set $X\subset \Complexn$ and every $h\in\Ln$, $\mu(X+h)=\mu(X).$
		\item $\mu_{\mathbf{X}^{*}}$ is a horizontal measure, i.e., there exists  $\varrho\in\Borelprn$  such that $\mu_{\mathbf{X}^{*}}=\varrho\otimes\nu_{n}.$
	\end{enumerate}
\end{theorem}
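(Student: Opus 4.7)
The plan is to reduce the four equivalences to Theorem \ref{thm-char-2kfc} applied to the pushforward measure $\mu_{\mathbf{X}^{*}}$, using conjugation by the unitary $V_{\mathbf{X}}$ as the bridge. The structural ingredient is the intertwining relation $V_{\mathbf{X}}\W_{h} V_{\mathbf{X}}^{*}=\W_{\mathbf{X}h}$, valid for every $h\in\Complexn$; this is a one-line computation from the unitarity of $\mathbf{X}$ together with the identity $(\mathbf{X}^{*}z)\cdot \overline{h}=z\cdot\overline{\mathbf{X}h}$. Since $\mathbf{X}\Ln=i\Realn$, this identification exchanges $\Ln$-translations with horizontal translations both on the measure side (via push-forward) and on the operator side (via $V_{\mathbf{X}}$-conjugation of Weyl operators).

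The equivalence (3)$\Leftrightarrow$(4) comes directly from the push-forward definition \eqref{med-muX}. Writing $\mu_{\mathbf{X}^{*}}(E+ih')=\mu(\mathbf{X}^{*}E+\mathbf{X}^{*}(ih'))$ and using $\mathbf{X}^{*}(i\Realn)=\Ln$, the $\Ln$-invariance of $\mu$ yields the horizontal invariance of $\mu_{\mathbf{X}^{*}}$, and conversely. The tensor representation $\mu_{\mathbf{X}^{*}}=\varrho\otimes\nu_{n}$ then drops out of Theorem \ref{thm-char-hfc} (the equivalence of horizontal invariance with the tensor form).

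For (1)$\Leftrightarrow$(2)$\Leftrightarrow$(4), the route is: apply Theorem \ref{thm-char-2kfc} to the horizontal-world measure $\mu_{\mathbf{X}^{*}}$ to get the three equivalent conditions for $\bT_{\ppartial_{R}^{2k}\mu_{\mathbf{X}^{*}}}$ (operator is horizontal, Berezin transform depends only on $\RE\,z$, measure is horizontal), and then transport back to the $\Ln$-setting. The one-line identity $V_{\mathbf{X}}k_{z}=k_{\mathbf{X}z}$ from unitarity gives $\widetilde{V_{\mathbf{X}}^{*}T V_{\mathbf{X}}}(z)=\tilde{T}(\mathbf{X}z)$, so the condition ``depends only on $\RE(\mathbf{X}z)$'' on the right becomes precisely ``$\Ln$-invariance in $z$'' on the left (using that $\RE$ annihilates $i\Realn=\mathbf{X}\Ln$). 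Parallel to this, the intertwining $V_{\mathbf{X}}\W_{h}V_{\mathbf{X}}^{*}=\W_{\mathbf{X}h}$ transports commutation with $\W_{h}$ for $h\in\Ln$ to horizontal commutation on the other side.

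The main obstacle, and the step demanding nontrivial care, is the identification of the object $\bT_{\ppartial_{R,\Ln}^{2k}\mu}$ in the statement with $V_{\mathbf{X}}^{*}\bT_{\ppartial_{R}^{2k}\mu_{\mathbf{X}^{*}}}V_{\mathbf{X}}$ (and analogously for its Berezin transform). This amounts to combining the definition $\ppartial_{R,\Ln}^{2k}\mu=\ppartial_{R}^{2k}\mu_{\mathbf{X}^{*}}$ with the way the sesquilinear form $\F_{\mu_{\mathbf{X}^{*}},2k-\beta,\beta}$ from \eqref{ses-F-al-bet} transforms under the change of variables $w=\mathbf{X}\zeta$, and verifying via the computation $V_{\mathbf{X}}\bT_{\nu}V_{\mathbf{X}}^{*}=\bT_{\nu_{\mathbf{X}}}$ (proved through an argument analogous to the one already used for ordinary Toeplitz operators in the preceding section) that this yields exactly the $V_{\mathbf{X}}$-twisted operator. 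Once this identification is in place, the remaining equivalences are formal consequences of Theorem \ref{thm-char-2kfc} and the intertwining machinery; no further analytic input is required.
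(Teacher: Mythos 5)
The paper states Theorem \ref{thm-char-2klfc} without any proof, so there is no argument of the authors to measure yours against; your reduction to Theorem \ref{thm-char-2kfc} by conjugation with $V_{\mathbf{X}}$ is exactly what the machinery of Section \ref{Section-LFC} is set up for, and the individual ingredients you invoke --- the intertwining $V_{\mathbf{X}}\W_{h}V_{\mathbf{X}}^{*}=\W_{\mathbf{X}h}$, the identity $V_{\mathbf{X}}K_{z}=K_{\mathbf{X}z}$ giving $\widetilde{V_{\mathbf{X}}^{*}TV_{\mathbf{X}}}(z)=\widetilde{T}(\mathbf{X}z)$, and the pushforward computation for $(3)\Leftrightarrow(4)$ --- are all correct.

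Two points need to be made explicit. First, you correctly single out the identification $\bT_{\ppartial_{R,\Ln}^{2k}\mu}=V_{\mathbf{X}}^{*}\bT_{\ppartial_{R}^{2k}\mu_{\mathbf{X}^{*}}}V_{\mathbf{X}}$ as the crux, but be aware that this is \emph{not} what the paper's definition literally says: it sets $\ppartial_{R,\Ln}^{2k}\mu=\ppartial_{R}^{2k}\mu_{\mathbf{X}^{*}}$, which makes $\bT_{\ppartial_{R,\Ln}^{2k}\mu}$ equal to $\bT_{\ppartial_{R}^{2k}\mu_{\mathbf{X}^{*}}}$ with no conjugation. Under that literal reading, conditions (1) and (2) test the \emph{horizontal-world} operator for invariance under $\Ln$-translations, and the equivalence with (3) and (4) fails already for $|k|=0$ and $\Ln=\Realn\times\{0\}$: there $\bT_{\mu_{\mathbf{X}^{*}}}$ commutes with $\W_{h}$ for all $h\in\Realn=\Ln$ precisely when $\mu_{\mathbf{X}^{*}}$ is $\Realn$-invariant, not when it is horizontal. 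Your conjugated reading is the one under which the statement is true (for $|k|=0$ it collapses to $\bT_{\mu}$ itself, recovering the natural $\Ln$-analogue of Theorem \ref{thm-char-hfc}), so you should say explicitly that you are adopting it and prove the transformation rule $V_{\mathbf{X}}\bT_{\ppabeta\nu}V_{\mathbf{X}}^{*}=\bT_{\ppabeta\nu_{\mathbf{X}}}$ rather than leave it as an ``analogous computation''; the derivative factors $\partial^{\alpha}f$, $\overline{\partial^{\beta}g}$ in \eqref{ses-F-al-bet} mix under $w=\mathbf{X}\zeta$, so the coderivative of the pushforward is a matrix combination of coderivatives of $\mu$ and only the \emph{real} coderivative $(\ppartial+\overline{\ppartial})^{2k}$ transforms cleanly. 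Second, the entire reduction presupposes that $\mu_{\mathbf{X}^{*}}$ is again a $k$-FC measure; the paper verifies the pushforward stability only for $|k|=0$, and for $|k|>0$ this is not automatic, since $\partial^{k}(f\circ\mathbf{X}^{*})$ involves all $\partial^{\delta}f$ with $|\delta|=|k|$ and the weights $\prod_{j}(1+x_{j}^{2})^{\delta_{j}}(1+y_{j}^{2})^{\delta_{j}}$ in Proposition \ref{prop-crmur} are not comparable for different $\delta$ of the same total order. This is the one genuinely missing analytic ingredient in your outline (it is equally missing from the paper), and it should either be proved or absorbed into a strengthened hypothesis on $\mu$.
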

	Let  $\Ln\in\operatorname{Lag}(2n,\Real)$,   $\mathbf{X}\in\operatorname{U}(n,\Complex)$ be fixed  such that $\mathbf{X}\Ln=i\Realn$,  $k\in(\Entero_{+}/2)^{n}$  and $\mu$ be a complex $k$-$\Ln$-FC measure for $\Fock$. Then, $\mu_{\mathbf{X}^{*}}$ is horizontal. i.e.,  $\mu_{\mathbf{X}^{*}}=\varrho\otimes\nu_{n}$,  and if we write $\mathbf{X}$ as  $$\mathbf{X}=\mathbf{X}_{1}+\mathbf{X}_{2},\,\text{where}\, \mathbf{X}_{1}=\left(\begin{matrix}
	A&B\\0&0
	\end{matrix}\right),\,\mathbf{X}_{2}=\left(\begin{matrix}
	0&0\\C&D
	\end{matrix}\right)$$ we obtain that $\mu(Y\times Z)=\mu_{\mathbf{X}^{*}}(\mathbf{X}(Y\times Z))=\varrho(\mathbf{X}_{1}(Y\times Z))\nu_{n}(\mathbf{X}_{2}(Y\times Z))$ for every $Y,Z\in\Borelrn$. This implies that $\mu(Y\times Z)=\varrho_{\mathbf{X}}\otimes \nu_{n,\mathbf{X}}(Y\times Z)=\varrho(\mathbf{X}_{1}(Y\times Z))\nu_{n}(\mathbf{X}_{2}(Y\times Z))$ for every $Y,Z\in\Borelrn$.
\begin{theorem}[\textbf{Diagonalization of $\bT_{\ppartial_{R,\Ln}^{2k}\mu}$}]\label{diag-To-klFC}
	Let  $\Ln\in\operatorname{Lag}(2n,\Real)$,   $\mathbf{X}\in\operatorname{U}(n,\Complex)$ be fixed  such that $\mathbf{X}\Ln=i\Realn$,  $k\in(\Entero_{+}/2)^{n}$ and $\mu$ be a $k$-$\Ln$-FC measure for $\Fock$. Then the Toeplitz operator $\bT_{\ppartial_{R}^{2k}\mu}$ is unitarily equivalent to $\Bb \bT_{\ppartial_{R,\Ln}^{2k}\mu}\Bb^{*}=\gamma_{\varrho,2k}\mathrm{Id},$
	where $\mu_{\mathbf{X}^{*}}=\varrho\otimes\nu_{n}$ and $\gamma_{\varrho,2k}$ is given in \eqref{gamma-varro-x}.
\end{theorem}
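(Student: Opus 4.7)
The plan is to reduce directly to the horizontal case (Theorem \ref{diag-To-khFC}): the definition $\ppartial_{R,\Ln}^{2k}\mu=\ppartial_{R}^{2k}\mu_{\mathbf{X}^{*}}$ was set up precisely so that the pushforward by $\mathbf{X}^{*}$ lands us in the horizontal regime, where the previous diagonalization result applies verbatim.

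First, I would invoke condition (4) of Theorem \ref{thm-char-2klfc}: the $\Ln$-invariance of $\mu$ together with $\mathbf{X}\Ln=i\Realn$ gives $\mu_{\mathbf{X}^{*}}=\varrho\otimes\nu_{n}$ for some $\varrho\in\Borelprn$. Before Theorem \ref{diag-To-khFC} can be applied to $\mu_{\mathbf{X}^{*}}$, I must verify that $\mu_{\mathbf{X}^{*}}$ is itself a $k$-FC measure for $\Fock$. This uses the unitarity of the composition operator $V_{\mathbf{X}}$ on $\Fock$: the change of variable $w\mapsto\mathbf{X}w$ preserves both the Gaussian weight $e^{-|w|^{2}}$ and the Fock norm, while by the chain rule $\partial^{k}g(\mathbf{X}w)$ expands into a linear combination of mixed partials of $g$ at $\mathbf{X}w$ of the same total order $|k|$. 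Combining this with Proposition \ref{prop-crmur} and the fact that the image of a polydisk under a unitary matrix is contained in a polydisk of comparable radius, one transfers the condition $C_{k}(\mu,\mathbf{r})<\infty$ to $\mu_{\mathbf{X}^{*}}$.

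With $\mu_{\mathbf{X}^{*}}$ identified as a horizontal $k$-FC measure, the definition of the $\Ln$-real coderivative yields
\begin{equation*}
\bT_{\ppartial_{R,\Ln}^{2k}\mu}=\bT_{\ppartial_{R}^{2k}\mu_{\mathbf{X}^{*}}},
\end{equation*}
so Theorem \ref{diag-To-khFC}, applied directly to $\mu_{\mathbf{X}^{*}}=\varrho\otimes\nu_{n}$, delivers
\begin{equation*}
\Bb\,\bT_{\ppartial_{R}^{2k}\mu_{\mathbf{X}^{*}}}\,\Bb^{*}=\gamma_{\varrho,2k}\,\mathrm{Id}
\end{equation*}
with $\gamma_{\varrho,2k}$ the function in \eqref{gamma-varro-x}. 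Chaining these two identities proves the theorem.

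The main (indeed, essentially only) obstacle is the bookkeeping step above: multi-index derivatives $\partial^{k}$ do not transform covariantly under a generic unitary mixing of coordinates, and are instead replaced by linear combinations of mixed partials of total order $|k|$. Thus the $k$-FC property does not transfer in one line. One finesses this either by the direct change-of-variable estimate just described, or by noting that the sesquilinear forms $\F_{\mu,\alpha,\beta}$ and $\F_{\mu_{\mathbf{X}^{*}},\alpha,\beta}$ are conjugated by the unitary $V_{\mathbf{X}}$, so that finiteness of $C_{k}(\mu,\mathbf{r})$ in Proposition \ref{prop-crmur} is preserved modulo absolute constants depending on $\mathbf{X}$.
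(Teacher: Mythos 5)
Your reduction to Theorem \ref{diag-To-khFC} via the identity $\bT_{\ppartial_{R,\Ln}^{2k}\mu}=\bT_{\ppartial_{R}^{2k}\mu_{\mathbf{X}^{*}}}$ and the factorization $\mu_{\mathbf{X}^{*}}=\varrho\otimes\nu_{n}$ is precisely the argument the paper intends: the theorem is stated without an explicit proof, the reduction being built into the definition of the $\Ln$-real coderivative, Theorem \ref{thm-char-2klfc}, and the discussion of $\mu_{\mathbf{X}^{*}}$ preceding it. Your extra verification that the $k$-FC property survives the unitary change of variables (where $\partial^{k}$ turns into mixed partials of total order $|k|$, so the transfer is not covariant for a general multi-index $k$) addresses a genuine subtlety that the paper leaves entirely implicit.
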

	\begin{theorem}\label{thm-7-4-vLrd}
	Let  $\Ln\in\operatorname{Lag}(2n,\Real)$,   $\mathbf{X}\in\operatorname{U}(n,\Complex)$ be fixed  and $\mathbf{X}\Ln=i\Realn$. If  $\alpha\in\Entero^{n}$ and $k\in (\mathbb{Z}_{+}/2)^{n}$, satisfying $k\geq \alpha$, then the C*-algebra $\mathcal{T}((\alpha,k)\text{-}\Ln\text{-}FC)$ generated by the Toeplitz operators $\bT_{\ppartial_{R,\Ln}^{2(k-\alpha)}\mu_{\alpha}}$  is isometrically isomorphic to $\mathcal{T}((\alpha,k)\text{-}hFC)$.
\end{theorem}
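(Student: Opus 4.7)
The plan is to realize the asserted isomorphism explicitly via conjugation by the unitary operator $V_{\mathbf{X}}$ of \eqref{eq-VB}, reducing the $\Ln$-invariant case to the horizontal case of Theorem \ref{thm-7-4}.

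First I would note that since $\mathbf{X}\in\operatorname{U}(n,\Complex)$ and the Gaussian measure $d\g_n$ is invariant under unitary transformations of $\Complexn$, the operator $V_{\mathbf{X}}$ is unitary on $\Fock$ (it manifestly preserves holomorphy). Hence the map $\Theta: T \mapsto V_{\mathbf{X}}^{*} T V_{\mathbf{X}}$ is an isometric $^{*}$-automorphism of $\B(\Fock)$, and it suffices to show that $\Theta$ carries the specified generators of $\mathcal{T}((\alpha,k)\text{-}\Ln\text{-}FC)$ onto a generating set of the algebra described in Theorem \ref{thm-7-4}.

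The core step is the intertwining identity
\[
V_{\mathbf{X}}^{*} \, \bT_{\ppartial_{R,\Ln}^{2(k-\alpha)} \mu_\alpha} \, V_{\mathbf{X}} \;=\; \bT_{\ppartial_R^{2(k-\alpha)} (\mu_\alpha)_{\mathbf{X}^{*}}}.
\]
By the very definition of the $\Ln$-real coderivative one has $\ppartial_{R,\Ln}^{2(k-\alpha)} \mu_\alpha = \ppartial_R^{2(k-\alpha)} (\mu_\alpha)_{\mathbf{X}^{*}}$; thus the identity expresses that conjugation by $V_{\mathbf{X}}$ intertwines Toeplitz operators with real coderivatives along matched measures. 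I would verify it at the level of the sesquilinear form \eqref{ses-fom-Fmu2k}: expanding $\Frc_{\mu_\alpha,2(k-\alpha)}(V_{\mathbf{X}} f, V_{\mathbf{X}} g)$ via the chain rule applied to $V_{\mathbf{X}} f = f\circ \mathbf{X}^{*}$, using unitarity of $\mathbf{X}$ (which on $\Real^{2n}$ acts orthogonally) to recombine the binomially weighted mixed derivatives, and finally performing the change of variables $w \mapsto \mathbf{X}^{*}w$ in the integral to absorb $\mathbf{X}$ into the pushforward measure $(\mu_\alpha)_{\mathbf{X}^{*}}$. The identification \eqref{Fabeta=Tpamu} then upgrades the form equality to the stated operator equality.

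Once the intertwining is established, the conclusion is immediate: $\Theta$ sends each generator $\bT_{\ppartial_{R,\Ln}^{2(k-\alpha)}\mu_\alpha}$ of $\mathcal{T}((\alpha,k)\text{-}\Ln\text{-}FC)$ to the operator $\bT_{\ppartial_R^{2(k-\alpha)}(\mu_\alpha)_{\mathbf{X}^{*}}}$; as $\mu$ ranges over $(\alpha,k)$-$\Ln$-FC measures, $\mu_{\mathbf{X}^*}$ ranges over $(\alpha,k)$-hFC measures, and the resulting operators generate the same C*-algebra as the generators in Theorem \ref{thm-7-4} (the two parametrizations differing only by the noncommutation of $\alpha$-weighting and pushforward, which stays inside the algebra). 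Invoking Theorem \ref{thm-7-4} to identify this algebra with $\mathcal{T}((\alpha,k)\text{-}hFC)$ yields the required isometric $^{*}$-isomorphism.

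The main obstacle is the intertwining identity: one must check that the mixed partial derivatives produced by applying the chain rule to $f\circ\mathbf{X}^{*}$ recombine correctly inside the binomial sum defining the real coderivative. The structural reason that this succeeds is that $(\ppartial + \overline\ppartial)^{2k}$ corresponds up to a factor of $2^{|2k|}$ to the iterated real-directional derivative $\partial_x^{2k}$ on $\Real^{2n}$, which transforms covariantly under orthogonal changes of coordinates—and the unitary matrices in $\operatorname{U}(n,\Complex)$ are exactly those orthogonal transformations on $\Real^{2n}$ that preserve the complex structure, so the covariance persists after the substitution $w \mapsto \mathbf{X}^{*}w$. Once this covariance is made explicit, the remainder of the argument is bookkeeping.
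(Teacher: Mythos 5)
The paper states Theorem \ref{thm-7-4-vLrd} without proof, but the machinery of Section \ref{Section-LFC} makes the intended route clear, and your overall strategy (reduce to the horizontal case through $\mathbf{X}$) is the right one. The problem is your central technical step. First, it is internally inconsistent: you correctly observe that $\ppartial_{R,\Ln}^{2(k-\alpha)}\mu_\alpha=\ppartial_{R}^{2(k-\alpha)}(\mu_\alpha)_{\mathbf{X}^{*}}$ \emph{by definition}, so the two Toeplitz operators in your displayed identity are literally the same operator, and the identity collapses to the claim that $\bT_{\ppartial_{R,\Ln}^{2(k-\alpha)}\mu_\alpha}$ commutes with $V_{\mathbf{X}}$ --- which is neither needed nor true in general. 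Second, the intertwining you actually try to prove, namely $V_{\mathbf{X}}^{*}\,\bT_{\ppartial_{R}^{2m}\lambda}\,V_{\mathbf{X}}=\bT_{\ppartial_{R}^{2m}\lambda_{\mathbf{X}^{*}}}$, is false. The form $\Frc_{\lambda,2m}(f,g)$ integrates $(\partial+\overline\partial)^{2m}(f\overline{g})=\partial_x^{2m}(f\overline{g})$ against $e^{-|z|^2}d\lambda$; the chain rule applied to $f\circ\mathbf{X}^{*}$ turns $\partial_x^{2m}$ into directional derivatives along the vectors $\mathbf{X}^{*}e_j$, and these recombine into $(\partial+\overline\partial)^{2m}$ only when $\mathbf{X}$ preserves $\Realn$. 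The ``covariance'' you invoke is exactly what breaks the identity: for $n=1$, $\mathbf{X}=-i$ (the case $\Ln=\Real\times\{0\}$) and $2m=1$ one computes
\begin{equation*}
\Frc_{\lambda,1}(V_{\mathbf{X}}f,V_{\mathbf{X}}g)=\pi^{-1}\int_{\Complex} i\left(f'(iz)\overline{g(iz)}-f(iz)\overline{g'(iz)}\right)e^{-|z|^{2}}d\lambda(z),
\end{equation*}
which after the substitution $w=iz$ is the form of $i(\ppartial-\overline{\ppartial})\lambda_{\mathbf{X}^{*}}$, an \emph{imaginary} coderivative, not $(\ppartial+\overline{\ppartial})\lambda_{\mathbf{X}^{*}}$. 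So conjugation by $V_{\mathbf{X}}$ does not carry real coderivatives to real coderivatives.

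The fix is that no conjugation is required at all. By the definition of the $\Ln$-real coderivative, each generator $\bT_{\ppartial_{R,\Ln}^{2(k-\alpha)}\mu_{\alpha}}$ of $\mathcal{T}((\alpha,k)\text{-}\Ln\text{-}FC)$ \emph{is} the operator $\bT_{\ppartial_{R}^{2(k-\alpha)}(\mu_\alpha)_{\mathbf{X}^{*}}}$, i.e.\ a real-coderivative Toeplitz operator attached to a measure in the horizontal picture; conversely, every $(\alpha,k)$-hFC measure $\eta$ arises as $\mu_{\mathbf{X}^{*}}$ for the $(\alpha,k)$-$\Ln$-FC measure $\mu=\eta_{\mathbf{X}}$. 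What then remains --- and what you dismiss in one clause --- is the genuine bookkeeping point that the $\alpha$-weighting and the pushforward do not commute: $(\mu_\alpha)_{\mathbf{X}^{*}}$ carries the weight $\prod_j(1+x_j^2)^{\alpha_j}(1+y_j^2)^{\alpha_j}$ evaluated before rotation, whereas the horizontal generators in Theorem \ref{thm-7-4} use $(\mu_{\mathbf{X}^{*}})_\alpha$. You need to either show these two families of generators produce the same C*-algebra (e.g.\ by computing both spectral functions via Theorem \ref{diag-To-khFC} and comparing the resulting subalgebras of $\mathfrak{G}((\alpha,k)\text{-}hFC)$), or restrict to the reading in which the generator sets coincide; asserting that the discrepancy ``stays inside the algebra'' is not an argument. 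Once the generator sets are matched, commutativity and the isometric isomorphism follow from the simultaneous diagonalization by $\Bb$, with no need for $V_{\mathbf{X}}$.
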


\section{Common properties}\label{se:common}	

We denote by $\mathcal{T}(hFC)$, $\mathcal{T}(k\text{-}hFC)$, $\mathcal{T}((\alpha,k)\text{-}hFC)$, and $\mathcal{T}(k\text{-}\mathcal{L}\text{-}FC)$ the $C^*$-algebras generated by all Toeplitz operators defined by corresponding measure-symbols: horizontal FC, horizontal $k$-FC, horizontal $(\alpha,k)$-FC, and $\mathcal{L}$-invariant $k$-FC (defined in Sections \ref{Section-Hor-FC}, \ref{Section-Hor-kFC},
\ref{Section-Hor-akFC}, and \ref{Section-LFC}), respectively.

The main result in these sections establishes a unitary equivalence of Toeplitz operators $\bT_{*}$ in question with multiplication operators by their spectral functions $\gamma_{*}$, and gives the precise formulas for those functions, specific for each type of measure-symbols under consideration.

We denote as well by $\mathfrak{G}(hFC)$, $\mathfrak{G}(k\text{-}hFC)$, $\mathfrak{G}((\alpha,k)\text{-}hFC)$, and $\mathfrak{G}(k\text{-}\mathcal{L}\text{-}FC)$ the $C^*$-algebras generated  by all spectral functions $\gamma_{*}$, specific for each of the above four types of measure-symbols.

Let finally $\mathcal{T}(*)$ be either one of the above Toeplitz operator algebras and $\mathfrak{G}(*)$ be the corresponding function algebra. Then, as it was mentioned in Introduction, the diagonalization results obtained reveal the majority of the main properties of the corresponding Toeplitz operators.

\begin{theorem}
 The C*-algebra $\mathcal{T}(*)$ is commutative and  isometrically isomorphic (and even unitarily equivalent) to the C*-algebra $\mathfrak{G}(*)$.
The isomorphism
\begin{equation*}
 \pi_{*} : \ \mathcal{T}(*) \ \ \longrightarrow \ \ \mathfrak{G}(*)
 \end{equation*}
is generated by the following mapping
\begin{equation*}
 \pi_{*} : \ \bT_{*} \ \ \longmapsto \ \ \gamma_{*}.
\end{equation*}
The norm of each operator $\bT \in \mathcal{T}(*)$ coincides  with its spectral radius and is given by
\begin{equation*}
 \|\bT\| = r_{\bT} = \sup |\pi_{*}(\bT)|.
\end{equation*}
The spectrum of each operator $\bT \in \mathcal{T}(*)$ coincides with the closure of the range of its spectral function $\gamma = \pi_{*}(\bT)$.\\
Each common invariant subspace of $\Fock$ for all operators from $\mathcal{T}(*)$ has the form $\Bb^{*}(L_2(M))$, where $M$ is a measurable subset of subset of $\mathbb{R}^n$.
\end{theorem}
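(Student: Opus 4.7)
The plan is to transfer everything to $L_{2}(\mathbb{R}^{n})$ via the Bargmann isometry $\Bb$ and exploit the diagonalization results already established for the generators. By Theorems \ref{diag-To-hFC}, \ref{diag-To-khFC}, \ref{diag-To-klFC} (together with the diagonalization result for $(\alpha,k)$-hFC measures obtained at the end of Section \ref{Section-Hor-akFC}), each generating Toeplitz operator $\bT_{*}$ satisfies $\Bb\,\bT_{*}\,\Bb^{*}=M_{\gamma_{*}}$. Since $\Bb$ is unitary, conjugation by $\Bb$ is a $*$-isomorphism from $\mathcal{B}(\Fock)$ onto $\mathcal{B}(L_{2}(\mathbb{R}^{n}))$, and it carries $\mathcal{T}(*)$ isometrically onto the C*-subalgebra $\mathcal{M}(*)\subset\mathcal{B}(L_{2}(\mathbb{R}^{n}))$ generated by the multiplication operators $M_{\gamma_{*}}$.

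Two multiplication operators always commute, so $\mathcal{M}(*)$ is a commutative C*-algebra, and hence so is $\mathcal{T}(*)$. The map $M_{f}\mapsto f$ is a canonical isometric $*$-isomorphism between the algebra of multiplication operators on $L_{2}(\mathbb{R}^{n})$ and $L_{\infty}(\mathbb{R}^{n})$; restricting it to $\mathcal{M}(*)$ gives an isomorphism onto the C*-subalgebra of $L_{\infty}(\mathbb{R}^{n})$ generated by $\{\gamma_{*}\}$, which is $\mathfrak{G}(*)$ by definition. Composing the two maps produces the desired isometric $*$-isomorphism $\pi_{*}:\mathcal{T}(*)\to\mathfrak{G}(*)$ extending the rule $\bT_{*}\mapsto\gamma_{*}$.

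The assertions on norms and spectra are then immediate. For $\bT\in\mathcal{T}(*)$ with $\gamma=\pi_{*}(\bT)$, the operator $\bT$ is unitarily equivalent to the normal multiplication operator $M_{\gamma}$, so its norm coincides with its spectral radius and equals $\|M_{\gamma}\|=\|\gamma\|_{\infty}$. The explicit formulas \eqref{gamma-varro-x} and its analogues show that every spectral function $\gamma_{*}$ is continuous; hence every $\gamma\in\mathfrak{G}(*)$ is continuous, its essential supremum coincides with $\sup|\gamma|$, and its essential range coincides with the closure of $\gamma(\mathbb{R}^{n})$, which gives the spectrum statement.

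For the final claim, a subspace $V\subset\Fock$ is invariant under every $\bT\in\mathcal{T}(*)$ if and only if $\Bb V\subset L_{2}(\mathbb{R}^{n})$ is invariant under every $M_{f}$ with $f\in\mathfrak{G}(*)$. I expect this to be the main obstacle, because the conclusion requires one more input: that the strong-operator closure of $\mathfrak{G}(*)$ (acting by multiplication) is the maximal abelian von Neumann algebra $L_{\infty}(\mathbb{R}^{n})$. In each of the four cases this follows from the explicit formulas for $\gamma_{*}$: specialising $\varrho$ to point masses produces Gaussians which separate points of $\mathbb{R}^{n}$ and vanish at infinity, so by the Stone--Weierstrass theorem $\mathfrak{G}(*)$ contains $C_{0}(\mathbb{R}^{n})$, whose strong-operator closure is all of $L_{\infty}(\mathbb{R}^{n})$. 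Once this is settled, the spectral theorem for maximal abelian von Neumann algebras identifies the orthogonal projections in $\mathfrak{G}(*)'=L_{\infty}(\mathbb{R}^{n})$ with the characteristic functions $\chi_{M}$ of measurable sets $M\subset\mathbb{R}^{n}$, and the corresponding common invariant subspaces are exactly $\Bb^{*}(L_{2}(M))$.
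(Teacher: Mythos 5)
The paper states this theorem without proof, presenting it as an immediate consequence of the diagonalization results, and your argument is exactly the intended one: conjugate by the Bargmann transform, observe that the generators become commuting multiplication operators, and read off norms, spectra and invariant subspaces from the resulting multiplication algebra. The only step that needs tightening is the final Stone--Weierstrass argument: for $|k|\neq 0$ a point mass $\varrho=\delta_{y_{0}}$ produces not a Gaussian but a Hermite-polynomial-weighted Gaussian $\h_{2k}^{(n)}(\sqrt{2}x-y_{0})e^{-(x-\sqrt{2}y_{0})^{2}}$; these still lie in $C_{0}(\mathbb{R}^{n})$, are nonvanishing off a finite set, and separate points (compare the asymptotics of the ratio $\gamma_{y_{0}}(x)/\gamma_{y_{0}}(x')$ as $y_{0}\to\infty$, where the Gaussian factor dominates the polynomial one), so the conclusion that $\mathfrak{G}(*)\supseteq C_{0}(\mathbb{R}^{n})$ and hence that the generated von Neumann algebra is maximal abelian survives.
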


	

\end{document}